% !TeX spellcheck = en_GB
% !TEX encoding = IsoLatin2
\documentclass[10pt]{amsart}
\usepackage{amsthm, amsmath,cleveref, amsfonts, amssymb, enumerate, textcmds, tensor, enumitem, mathdots}
\usepackage{pst-node}
\usepackage{tikz-cd} 
\usepackage{graphicx,tikz}
\usepackage{enumerate}
\usepackage{pinlabel}
\usepackage[colorinlistoftodos]{todonotes}
\setlength{\textwidth}{5.5in}
\setlength{\oddsidemargin}{0.50in}
\setlength{\evensidemargin}{0.40in}

\newcommand{\Z}{\mathbb Z}
\newcommand{\R}{\mathbb R}
\newcommand{\C}{\mathbb C}

\newcommand{\p}{\mathfrak{p}}
\newcommand{\g}{\mathfrak{g}}
\newcommand{\kk}{\mathfrak{k}}
\newcommand{\aaa}{\mathfrak{a}}
\newcommand{\h}{\mathfrak{h}}
\newcommand{\Met}{\mathbb{P}\mathrm{Met}}

\newtheorem{thm}{Theorem}[section]
\newtheorem{lem}[thm]{Lemma}
\newtheorem{prop}[thm]{Proposition}
\newtheorem{cor}[thm]{Corollary}

\newtheorem*{thma}{Theorem A}
\newtheorem*{LabourieConjecture}{Labourie Conjecture}
\newtheorem*{thmb}{Theorem B}
\newtheorem*{thmbp}{Theorem B for $\textrm{PGL}(n,\C)$}

\newtheorem*{cora}{Corollary A}

\DeclareMathOperator{\tr}{\textrm{tr}}

\DeclareMathOperator{\dbar}{\overline{\partial}}
\DeclareMathOperator{\pK}{\mathcal{O}(\p^{\C})^{K^{\C}}}
\DeclareMathOperator{\aW}{\mathcal{O}(\aaa^{\C})^W}

\theoremstyle{definition}
\newtheorem{defn}[thm]{Definition}
\newtheorem{remark}[thm]{Remark}

\begin{document}
\title{Unstable minimal surfaces in symmetric spaces of non-compact type}
\author{Nathaniel Sagman}
\author{Peter Smillie}

\begin{abstract}
We prove that if $\Sigma$ is a closed surface of genus at least 3 and $G$ is a split real semisimple Lie group of rank at least $3$ acting faithfully by isometries on a symmetric space $N$, then there exists a Hitchin representation $\rho:\pi_1(\Sigma)\to G$ and a $\rho$-equivariant unstable minimal map from the universal cover of $\Sigma$ to $N$. This follows from a new lower bound on the index of high energy minimal maps into an arbitrary symmetric space of non-compact type. Taking $G=\textrm{PSL}(n,\R)$, $n\geq 4$, this disproves the Labourie Conjecture.
\end{abstract}
\maketitle

\begin{section}{Introduction}

Higher Teichm\"uller theory is centered on the study of representations of the fundamental group of a closed surface $\Sigma_g$ of genus $g$, $g\geq 2,$ into a real semisimple Lie group $G$ of rank at least 2. More specifically, a higher Teichm\"uller space is a connected component of the representation variety $\mathrm{Rep}(\Sigma_g,G) = \mathrm{Hom}(\pi_1(\Sigma_g),G)//G$ consisting entirely of discrete and faithful representations \cite{Wie}. The first examples, the Hitchin components, consisting of the Hitchin representations, were introduced in 1992 in the seminal paper \cite{Hi} using the theory of Higgs bundles. It was a decade later that Labourie \cite[Theorem 1.5]{L0} and Fock-Goncharov \cite{FG} independently proved that Hitchin components are higher Teichm\"uller spaces in the sense above. Other examples come from maximal and $\Theta$-positive representations (see \cite{BCGGO} and \cite{GLW}). Each of these families generalize the classical Teichm\"uller space $\mathbf{T}_g \subset \mathrm{Rep}(\Sigma_g,\mathrm{PSL}(2,\R))$ of Fuchsian representations. For an introduction to higher Teichm{\"u}ller theory, see the survey \cite{Wie}.

One of the fundamental goals of higher Teichm\"uller theory is to understand how the many interconnected properties - dynamical, geometric, and complex analytic - of the classical Teichm\"uller space generalize to higher Teichm\"uller spaces. For example, in \cite{Hi} Hitchin inquires about both the geometric significance of Hitchin representations and the nature of the natural action of the mapping class group on the Hitchin component. To this end, a central focus of the field for the last twenty years has been the Labourie Conjecture. This concerns minimal surfaces, meaning surfaces that locally minimize area. It predicts that for each Hitchin representation $\rho$ into $\mathrm{PSL}(n,\R)$, there should be a unique $\rho$-invariant minimal surface in the symmetric space whose quotient by $\rho$ is $\Sigma_g$. Labourie explained that when the conjecture holds for a given genus $g$ and dimension $n$, then this Hitchin component admits a mapping class group equivariant parametrization as the total space of a certain natural holomorphic vector bundle over Teichm{\"u}ller space. Moreover, this bundle comes with a mapping class group invariant K{\"a}hler metric for which Teichm{\"u}ller space is totally geodesic and whose restriction to Teichm{\"u}ller space is the Weil–Petersson metric. 
Labourie soon proved \emph{existence} of such a $\rho$-invariant minimal surface for any Hitchin representation $\rho$, leaving open only \emph{uniqueness}. Shortly after, he proved uniqueness for $n=3$ (see also \cite{Lo}).

Labourie's existence theorem for minimal surfaces in fact applies to a broad class of so-called Anosov representations that includes all representations in known higher Teichm{\"u}ller spaces. It is natural to extend the uniqueness conjecture to all higher Teichm{\"u}ller spaces, and we refer to this extension as the Generalized Labourie Conjecture. This is now known to be true for all known higher Teich{\"u}ller spaces for all Lie groups $G$ of rank 2 \cite{CTT}. For more background on the Labourie Conjecture, see section \ref{sec: labconjecture} below or the surveys \cite[section 6]{Wie}, \cite[section 6.5]{Swoboda}, \cite[section 7]{Li}.

As far as we're aware, the Generalized Labourie Conjecture was broadly believed to be true until March 2021, when Markovi{\'c} demonstrated the existence of a product of Fuchsian representations into $\textrm{PSL}(2,\mathbb{R})^3$ with multiple minimal surfaces in the corresponding product of closed hyperbolic surfaces \cite{M2}, for $g$ sufficiently large (see also \cite{M1}). This shows that the generalized conjecture fails for a certain higher Teichm\"uller space in rank 3, namely $\mathbf{T}_g \times \mathbf{T}_g \times \mathbf{T}_g$. With Markovi{\'c} in \cite{MSS}, we proved a suite of results elaborating on and improving on \cite{M2}.

In this paper, we show that Labourie's original conjecture fails for all $n\geq 4$, and that the generalized version is not true for any Hitchin component of any real Lie group of rank at least 3 (in genus at least 3, see Remark \ref{rem: genus 2}). In other words, the uniqueness conjecture fails for some representation in each Hitchin component except for those components in which it was already known to hold. 

Our basic strategy is to use Higgs bundles to construct unstable minimal surfaces invariant by some representation $\rho$ in a given Hitchin component. A minimal surface is called unstable if the area can be decreased to second order along a one-parameter variation of ($\rho$-invariant) surfaces. Labourie's existence theorem for minimal surfaces actually proves the existence of a surface that is area-minimizing among all competitors, and therefore cannot be unstable. Hence the uniqueness conjecture fails for any Hitchin representation admitting an unstable invariant minimal surface.  In fact, we provide in Theorem B a general way to produce unstable minimal surfaces in symmetric spaces of non-compact type, which is not specialized to Hitchin representations or even discrete representations. Our method also shows that uniqueness fails for many other higher Teichm\"uller spaces, and we expect that the Generalized Labourie Conjecture should also fail except in those cases in which it is known to hold.

\subsection{The Labourie Conjecture}\label{sec: labconjecture}
We now give a more detailed overview of the theory around the Labourie Conjecture. 
Let $\Sigma_g$ denote a closed surface of genus $g\geq 2$ and let $\mathbf{T}_g$ be the Teichm{\"u}ller space of marked complex structures on $\Sigma_g$. Let $G$ be a real semisimple Lie group with rank at least $3$ acting faithfully by isometries on a symmetric space $N$ of non-compact type, and consider a representation $\rho: \pi_1(\Sigma_g) \to G$. For every Riemann surface structure $S$ on $\Sigma_g$, with universal cover $\tilde{S}$, and $\rho$-equivariant smooth map $f:\tilde{S}\to N,$ there is a well-defined notion of $\rho$-equivariant Dirichlet energy $\mathcal{E}(S,f)$ (see section 2.2). Donaldson \cite{D} and Corlette \cite{C} proved that for essentially every $\rho$, there is a unique $\rho$-equivariant harmonic map $h: \tilde{S} \to N$, which satisfies $$\mathcal{E}(S,h)=\inf_f \mathcal{E}(S,f).$$ This gives a function $\mathbf{E}_\rho: \mathbf{T}_g \to [0,\infty)$, by $\mathbf{E}_\rho(S)=\mathcal{E}(S,h)$. When $S$ is a non-zero critical point of $\mathbf{E}_\rho$, the harmonic map is (weakly) conformal and as a result its image is a (branched) minimal surface. Conversely every branched minimal surface is a weakly conformal harmonic map for the Riemann surface structure induced by the (possibly degenerate) pullback metric. Moreover, since the energy of a conformal map is equal to the area of its image, a point in $\mathbf{T}_g$ minimizes $\mathbf{E}_\rho$ if and only if the branched minimal surface is area-minimizing. Henceforth we define a minimal map to be one that is harmonic and weakly conformal.

The Labourie Conjecture concerns Hitchin representations into $\textrm{PSL}(n,\R)$, which we define now. Let $i_n: \mathrm{SL}(2,\R) \to \mathrm{PSL}(n,\R)$ be a homomorphism coming from the $n$ dimensional irreducible representation of $\mathrm{SL}(2,\R)$. Fix a Fuchsian representation $\sigma: \pi_1(\Sigma_g) \to \mathrm{SL}(2,\R)$. The Hitchin component for $\mathrm{PSL}(n,\R)$ is the topological component of the representation variety $\mathrm{Rep}(\Sigma_g, \mathrm{PSL}(n,\R))$ containing $i_n \circ \sigma$, which we call $\textrm{Hit}(\Sigma_g,\mathrm{PSL}(n,\R))$ (more precisely, this is true when $n$ is odd, and when $n$ is even there are two isomorphic Hitchin components \cite{Hi}).

\begin{LabourieConjecture} 
Let $\rho$ be a representation in the Hitchin component for $\textrm{PSL}(n,\R).$ The energy functional $\mathbf{E}_\rho$ has a unique critical point.
\end{LabourieConjecture}
Shortly after making the conjecture, Labourie proved in \cite{L1} that $\mathbf{E}_\rho$ is proper on $\mathbf{T}_g$ when $\rho$ is well-displacing \cite[Definition 6.12]{L1}, and showed that Hitchin representations to $\textrm{PSL}(n,\R)$ are well-displacing. Hence, for $\rho$ in the Hitchin component for $\textrm{PSL}(n,\R),$ we can choose a Riemann surface structure that minimizes $\mathbf{E}_\rho$. Thus, the key point of the conjecture is uniqueness. We note that minimal maps for Hitchin representations are immersions (see Remark \ref{rem: immersion}).

Several versions of the Labourie Conjecture have appeared in the literature. In the first statement of the conjecture \cite{L0}, Labourie posited that $\mathbf{E}_\rho$ has a unique minimum. However, in later statements \cite{Labconvex}, \cite{L1}, and more generally in the literature, ``minimum" has been replaced by ``critical point" (for example see \cite[section 6.5]{Swoboda}), so this is what we take as the Labourie Conjecture. We also give a disproof of the ``minimum'' version in Corollary A. The critical point version is equivalent to the uniqueness of the equivariant minimal map, and to the uniqueness of an invariant minimal surface whose quotient is $\Sigma_g$. As well, in some sources the Labourie Conjecture is stated to be for all Hitchin representations into split real simple Lie groups (for instance, \cite[page 6]{CTT}, \cite[page 1]{M2}). Existence still holds: Guichard-Wienhard proved that all Anosov representations are well-displacing \cite[Theorem 1.7]{GuW}, and Guichard-Labourie-Wienhard proved that all Hitchin representations into all groups are Anosov \cite{GLW}. A generalized version of the conjecture that includes maximal representations is posed in \cite[section 7]{Li}.

Following Labourie (\cite[section 2]{L1} and \cite[section 1]{L2}), we briefly explain the construction of a mapping class group invariant complex structure on any Hitchin component for which the conjecture is true, using the theory of Higgs bundles. Let $S$ be a Riemann surface structure on $\Sigma_g$ with canonical bundle $\mathcal{K}$. Using Higgs bundles, Hitchin gives a parametrization of the Hitchin component for $\mathrm{PSL}(n,\R)$ by the Hitchin base
$$\mathbf{H}_{n}(S) =\oplus_{i=2}^n H^0(S,\mathcal{K}^i).$$ In $\mathbf{H}_{n}(S)$, Teichm{\"u}ller space $\mathbf{T}_g$ identifies non-holomorphically with the locus $H^0(S,\mathcal{K}^2)\times \{0\}\times \dots \times \{0\}.$ A drawback of Hitchin's parametrization is that it is not natural with respect to the mapping class group action on the Hitchin component; in particular the complex structure on $\mathbf{H}_n$ is not mapping class group invariant. To remedy this, Labourie proposes the following. For a Hitchin representation $\rho$ corresponding to differentials $(q_2, \ldots, q_n)\in \mathbf{H}_{n}(S)$, the equivariant harmonic map $h:\tilde{S}\to G/K$ is minimal if and only if $q_2=0$.
Varying over $\mathbf{T}_g$, it follows that the space of all equivariant minimal surfaces for Hitchin representations is parametrized by the holomorphic vector bundle $\mathbf{M}_{n}(\Sigma_g)$ over $\mathbf{T}_g$, whose fiber over a Riemann surface $S$ is $\oplus_{i=3}^n H^0(S,\mathcal{K}^i).$ The holonomy map $$L_{n}:\mathbf{M}_{n}(\Sigma_g)\to \textrm{Hit}(\Sigma_g,\textrm{PSL}(n,\R))$$ is mapping class group equivariant, and we refer to it as the Labourie map. Labourie's minimal surface existence result says that $L_{n}$ is surjective. We conclude that if the Labourie Conjecture holds for a given $n,$ $L_n$ is bijective, and by Brouwer's invariance of domain, $L_n$ is a homeomorphism.

For a general split real simple Lie group $G$ of adjoint type, the Hitchin component $\textrm{Hit}(\Sigma_g,G)\subset \textrm{Rep}(\Sigma,G)$ is defined analogously to $\textrm{Hit}(\Sigma_g,\textrm{PSL}(n,\R))$, using the principal embedding $\textrm{SL}(2,\R)\to G$ (see \cite{Hi}). For such $G$, there is a holomorphic vector bundle $\mathbf{M}_G(\Sigma_g)$ over $\mathbf{T}_g$ analogous to $\mathbf{M}_n(\Sigma_g)$ and a surjective mapping $L_G: \mathbf{M}_G(\Sigma_g)\to \textrm{Hit}(\Sigma_g,G)$, which is injective when uniqueness of minimal surfaces holds (see section \ref{sec: area minimizers}). For spaces of maximal representations in rank $2$, similar equivariant parametrizations of the space of minimal surfaces as complex manifolds have been obtained in \cite{Co} and \cite{AC}, although the structure of the complex manifolds is more complicated. 

We now survey some results toward the Generalized Labourie Conjecture. The uniqueness of the minimal surface for $\mathrm{PSL}(2,\R)$ follows more or less from the definition of a Fuchsian representation. Minimal surfaces for products of Fuchsian representations into $\textrm{PSL}(2,\R)\times \textrm{PSL}(2,\R)$ are the graphs of minimal Lagrangian diffeomorphisms of hyperbolic surfaces, which Schoen used to prove uniqueness \cite{Sc}. For the product of a Fuchsian representation and a non-Fuchsian representation, uniqueness is due to Wan \cite{Wa}. Labourie proved uniqueness for $\mathrm{PSL}(3,\R)$ using hyperbolic affine spheres \cite{Labconvex}. In this case, the vector bundle parametrization of the $\mathrm{PSL}(3,\R)$-Hitchin component had already been established by Loftin \cite{Lo}, via affine spheres. Labourie then proved uniqueness for the Hitchin components of all split real simple $G$ of rank $2$ \cite{L2} (this adds $\textrm{PSp}(4,\R)$ and $G_2$), using that the minimal surfaces lift to $J$-holomorphic curves tangent to a distribution in a homogeneous space of $G$. Finally, Collier-Tholozan-Toulisse proved uniqueness for maximal representations into Hermitian Lie groups of rank $2$ \cite{CTT}, which completes the list of known higher Teichm{\"u}ller spaces in rank $2$. See also the work of Collier \cite{Co}, Alessandrini-Collier \cite{AC}, and Nie \cite{Nie}. 

For applications of minimal surfaces to higher Teichm{\"u}ller theory beyond the Labourie Conjecture, see works such as \cite{B}, \cite{CTT}, \cite{LTW}.

\subsection{Main result}
We recall that a minimal map is one that is harmonic and (weakly) conformal. Our main result is the following theorem:

\begin{thma}
Let $G$ be the adjoint form of a split real semisimple Lie group with rank at least $3$ acting faithfully by isometries on a symmetric space $N$. For every $g\geq 3,$ there exists a Hitchin representation $\rho:\pi_1(\Sigma_g)\to G$ and an unstable $\rho$-equivariant minimal map into $N$. In particular, there are at least two equivariant minimal surfaces in $N$.
\end{thma}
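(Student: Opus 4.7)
The plan is to combine three ingredients: Labourie's theorem producing a stable equivariant minimal map for every Hitchin representation \cite{L1}, the new lower bound on the Morse index of a high-energy minimal map into $N$ that is the technical heart of the paper, and the construction by Markovi\'c and the authors in \cite{MSS} of unstable equivariant minimal maps from $\Sigma_g$ ($g\geq 3$) into Euclidean spaces of dimension at least $3$. It suffices to produce one Hitchin representation $\rho$ admitting an unstable $\rho$-equivariant minimal map into $N$, since Labourie then supplies a distinct stable one, giving the two minimal surfaces claimed.

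To construct the unstable map, I would first set $n=\text{rank}(G)\geq 3$ and invoke \cite{MSS} to obtain an equivariant unstable minimal immersion $u:\tilde{\Sigma}_g\to\R^n$, which we view as a map into a maximal flat $F\subset N$. Next I would produce a one-parameter family of Hitchin representations $\rho_t:\pi_1(\Sigma_g)\to G$ whose Labourie minimal maps $f_t:\tilde{\Sigma}_g\to N$ are high-energy and, after rescaling, degenerate onto $u$ inside $F$ as $t\to\infty$. In the Hitchin/Higgs-bundle parameterization this should correspond to scaling suitable holomorphic differentials in the Hitchin base along directions dictated by the period data of $u$: large values of these differentials force the associated harmonic map to spread along the flats of $N$. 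Applying the new index lower bound to this family then yields $\text{index}(f_t)\geq\text{index}(u)\geq 1$ for $t$ large, so $f_t$ is unstable and we may take $\rho:=\rho_t$.

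The main obstacle is the index lower bound itself. Proving it will require a careful spectral analysis of the Jacobi operator of $f_t$, showing that negative-eigenvalue directions of the flat Jacobi operator of $u$ can be transported to produce negative-eigenvalue variations on $N$, despite the a priori sectional curvature contributions coming from $N$; it should be the high-energy regime that makes those curvature terms subdominant relative to the flat principal symbol. A secondary but non-trivial difficulty is arranging $\rho_t$ so that its flat limit actually recovers the prescribed $u$ from \cite{MSS}, which amounts to a realization argument matching the Hitchin base parameters to the Abelian differential data of $u$ and may require genericity within the Hitchin component.
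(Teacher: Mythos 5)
Your outline follows the same architecture as the paper's proof: take an unstable equivariant minimal surface $u$ in $\R^{\mathrm{rank}(G)}$ (available for $g\geq 3$ and rank $\geq 3$), realize it as the limiting flat object of a ray of Hitchin representations obtained by scaling the Higgs field, apply the index lower bound to conclude instability far out along the ray, and use properness of the energy functional (Labourie) to produce a second, stable minimal surface. But two steps of your plan, taken literally, are not what is (or can currently be) done, and would block the argument. First, no statement of the form ``the rescaled minimal maps $f_t$ degenerate onto $u$ inside a maximal flat $F\subset N$'' is proved or needed; geometric convergence of high-energy harmonic maps to buildings is exactly the part of the theory that is not fully understood, in particular near points where the Higgs field fails to be regular semisimple. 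What the paper proves instead is convergence of second variations: for a variation $X$ drawn from the limiting flat object (a section of the toral bundle, i.e.\ an \emph{equivariant} variation), the stability forms satisfy $Q_{H_R}(X_R)\to Q_f(X)$, using Mochizuki's asymptotic decoupling estimates away from the critical set of $\phi$ together with the log cut-off trick to discard that set. This is also why it matters that the $\R^n$ examples are destabilized by equivariant variations (e.g.\ a constant-length normal variation of the Schwarz P-surface); the lower bound is only against such variations, and a ``spectral transport of Jacobi eigenvectors'' for arbitrary variations is neither available nor required.

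Second, the realization step needs no genericity in the Hitchin component. One chooses an isometry of $\R^{l}$ onto a maximal toral subalgebra $\aaa\subset\p$ so that no restricted root vanishes identically on $\partial u$ (making $\partial u$ generically regular semisimple), evaluates the Chevalley generators of $\mathcal{O}(\p)^K$ on $\partial u$ to get a point of the Hitchin base, and applies the Hitchin section to obtain the Higgs bundle $\phi$. Conformality, i.e.\ $\nu(\phi^2)=0$, is automatic because $\nu$ is itself an invariant polynomial and $u$ is minimal, and the uniqueness part of the regular-semisimple conjugation lemma identifies the $G$-apartment bundle of $\phi$ with the flat $\aaa$-bundle determined by the period class of $u$, intertwining the minimal sections. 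With these two corrections—replacing convergence of maps by convergence of stability forms restricted to equivariant variations, and replacing a genericity/realization heuristic by the explicit Hitchin-section construction—your outline becomes the paper's proof.
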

For split real semisimple $G$ splitting into simple factors as $G=G_1\times \dots \times G_m,$ we use Hitchin to mean that each component representation to $G_i$ is Hitchin. In the semisimple case the associated symmetric space can have different invariant metrics leading to different meanings of ``minimal,'' and our theorem is true for any invariant metric. 

About the restriction to $g\geq 3,$ see Remark \ref{rem: genus 2}.

The unstable minimal surfaces of Theorem A can be described explicitly in terms of Higgs bundles. For example, for $G=\textrm{PSL}(4,\R)$, we have the following (see sections 2.3 and 4.1 for definitions).
\begin{thm}\label{thm: PSL(4,R)}
    For every $g\geq 3,$ there exist a closed Riemann surface $S$ of genus $g$ with canonical bundle $\mathcal{K}$ on which there exists abelian differentials $\phi_1,\phi_2,\phi_3,\phi_4\in H^0(S,\mathcal{K})$ such that, for real $R>0$ sufficiently large, the Higgs bundle in the $\textrm{PSL}(4,\R)$-Hitchin section associated to the Hitchin basepoint $$(q_2,q_3,q_4)=(0,\frac{1}{6}R^3(\phi_1\phi_2\phi_3+\phi_1\phi_2\phi_4+\phi_1\phi_3\phi_4+\phi_2\phi_3\phi_4),-\frac{2}{9}R^4\phi_1\phi_2\phi_3\phi_4)$$ determines a Hitchin representation $\rho:\pi_1(S)\to \textrm{PSL}(4,\R)$ together with a $\rho$-equivariant unstable minimal map to the symmetric space of $\textrm{PSL}(4,\R)$.
\end{thm}

In fact, in Theorem \ref{thm: PSL(4,R)} one can explicitly choose $S$ and the $\phi_i$'s. See the proofs of Theorem \ref{thm: PSL(4,R)} and Proposition \ref{exun} for details.

Theorem A shows that the Labourie map $L_G$ for $G$ is not injective. Hence, by Brouwer's invariance of domain, $L_G$ has no continuous section. Using this, together with a strengthened properness result for energy functionals (see the proof of Lemma \ref{lem: continuous}), we deduce:
\begin{cora}
For every $g\geq 3$ and pair $G,N$ as in Theorem A, there exists a Hitchin representation $\rho:\pi_1(\Sigma_g)\to G$ with at least two distinct area minimizing $\rho$-equivariant minimal maps into $N$.
\end{cora}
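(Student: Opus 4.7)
The plan is to promote the non-uniqueness of minimal maps furnished by Theorem A to the non-uniqueness of area-minimizing minimal maps, via a one-parameter deformation in the Hitchin component combined with the intermediate value theorem applied to areas of tracked critical points.

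By Theorem A, fix a Hitchin representation $\rho_0$ with an unstable equivariant minimal map $f_u$, and by Labourie's theorem, a stable equivariant minimal map $f_s$. For every Hitchin representation $\rho$, the infimum $A_{\min}(\rho)$ of equivariant area is achieved by some smooth minimal map; standard compactness/direct-method arguments for equivariant harmonic maps from a surface of negative Euler characteristic into the non-positively curved target $N$ also show that $\rho \mapsto A_{\min}(\rho)$ is continuous. I would then connect $\rho_0$ to a Fuchsian representation $\rho_1=i_G\circ \rho_F$ by a continuous path $\{\rho_t\}_{t\in[0,1]}$ in the Hitchin component; at $\rho_1$, the totally geodesic map $\tilde\Sigma_g\to \mathbb{H}^2\hookrightarrow N$ is the unique equivariant minimizer, and in particular no unstable equivariant minimal map exists there.

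The main step is to track $f_u$ along the path. Perturbing the path to be generic, the implicit function theorem applied to the minimal surface equation extends $f_u$ to a continuous family $f_u(\rho_t)$ of unstable minimal maps whose index is positive; in particular, $\mathrm{area}(f_u(\rho_t))>A_{\min}(\rho_t)$ on the domain of continuation. Since $f_u(\rho_t)$ cannot survive to $\rho_1$, it must terminate at some $t^*\in(0,1)$ by a fold bifurcation, merging with a local area-minimum $m(\rho_t)$ of equal area in the limit. If at $t^*$ the merged surface already realizes $A_{\min}(\rho_{t^*})$, then just past the fold we have two distinct equivariant minimal surfaces $f_u(\rho_t)$ and $m(\rho_t)$ with areas tending to $A_{\min}$ from above, and a continuity/compactness argument produces two area-minimizing minimal maps at $t^*$ itself. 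If instead the merged surface has area strictly above $A_{\min}(\rho_{t^*})$, then I would apply the same bifurcation analysis to the global minimizer branch: comparing the continuation of the Labourie minimizer $f_s(\rho_t)$ with any competing minimizer branch, the intermediate value theorem applied to the difference of their areas along the path forces a crossing, at which two distinct local minima share the value $A_{\min}$ and hence both minimize.

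The hard part will be making the bifurcation analysis and continuation of critical points rigorous in the infinite-dimensional setting of equivariant maps into $N$, especially at degenerate minimal surfaces where the Jacobi operator has nontrivial kernel. Standard tools from equivariant variational bifurcation theory and the Sard--Smale theorem applied to the Fredholm minimal surface operator should suffice to reduce to a finite-dimensional local model; these are the ``general principles'' the authors invoke. A secondary technical point is establishing uniqueness of the area-minimizer at the Fuchsian endpoint $\rho_1$, which follows from the classification of harmonic maps into a totally geodesic hyperbolic plane in $N$.
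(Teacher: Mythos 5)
Your strategy has a structural flaw, not just technical gaps: nothing in the bifurcation picture you describe forces two \emph{global} minimizers to coexist at any representation. For a fixed Hitchin representation the variational problem reduces to the smooth proper function $\mathbf{E}_\rho$ on $\mathbf{T}_g$, and it is perfectly consistent with your scenario that along the whole path the global minimizer is unique: the unstable branch issuing from Theorem A can annihilate in a fold with a \emph{strictly local} (non-global) minimum whose area stays above $A_{\min}(\rho_t)$, while the Labourie global minimizer persists uniquely throughout. Your first case is also incorrect as stated: at the fold time $t^*$ the two branches coalesce into a single degenerate critical point, so even if that merged surface realizes $A_{\min}(\rho_{t^*})$ you obtain one minimizer, not two. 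In the second case, the ``competing minimizer branch'' and the intermediate-value ``crossing'' are asserted but never produced; there is no reason the continuation of the Labourie minimizer should ever tie in area with another local minimum. So the path-plus-continuation argument, even if the Sard--Smale/genericity and degenerate-Jacobi-operator issues were fully resolved, cannot yield the conclusion: a genuinely global input is missing.

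The paper supplies exactly that global input. It packages all minimal surfaces for Hitchin representations into the bundle $\mathbf{M}_G$ over Teichm\"uller space and considers the Labourie map $L_G:\mathbf{M}_G\to \mathrm{Hit}(G)$, which is surjective by Labourie's properness theorem and \emph{not injective} by Theorem A. Both $\mathbf{M}_G$ and $\mathrm{Hit}(G)$ are open balls of the same dimension $(2g-2)\dim G$, so by Brouwer's invariance of domain any continuous section of $L_G$ would be a homeomorphism, forcing $L_G$ itself to be injective --- contradiction; hence $L_G$ admits no continuous section. If area minimizers were unique for every Hitchin representation, the assignment $\rho\mapsto s_{\min}(\rho)$ (conformal structure of the minimizer together with its point in the Hitchin base) would be such a section; its continuity is the only analytic work, and it is proved via Tholozan's lemma on locally uniformly proper families, after checking that Labourie's properness estimates for $\mathbf{E}_\rho$ are locally uniform in $\rho$ (via well-displacing constants). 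If you want to repair your approach, you would have to import a global ingredient of this kind (a degree or domain-invariance argument over the whole Hitchin component, or at least some mechanism forcing the minimizer branch itself to fail to continue), rather than tracking critical points along a single path.
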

Although the maps $L_G$ are not bijective, there are other candidates for an equivariant and holomorphic parametrization of the Hitchin components. See \cite{FT} and \cite{No}.

Note that we've defined Hitchin representations for Lie groups of adjoint type, and hence stated our results for such Lie groups, but the condition is not really necessary since every Hitchin representation into the split real form of the adjoint group of a complex Lie group lifts to the split real form of the universal cover (see \cite{Hi}).

\begin{subsection}{The case $\textrm{PSL}(2,\R)^n$}\label{Rtrees}
To motivate the proof of Theorem A, we explain our previous work with Markovi{\'c} \cite{MSS}. On a fixed Riemann surface structure $S$ over $\Sigma_g$ with universal cover $\tilde{S}$, there is a one-to-one correspondence between holomorphic quadratic differentials on $S$ and pairs $(\rho,h),$ where $\rho$ is a Fuchsian representation to $\textrm{PSL}(2,\R)$ and $h$ is a $\rho$-equivariant harmonic map from $\tilde{S}$ to the hyperbolic space $\mathbb{H}^2$ (see \cite{Hi}, \cite{Wan}, and \cite{W2}). A holomorphic quadratic differential $\phi$ on $S$ also corresponds to a unique equivariant harmonic map to an $\mathbb{R}$-tree: in one direction, the leaf space $(T,d)$ for the vertical foliation of $\phi$ is an $\mathbb{R}$-tree, and the projection map from $\tilde{S}\to (T,d)$ is harmonic. Given $\phi$, we consider the ray of representations to $\textrm{PSL}(2,\R)$ with harmonic maps determined by $R\phi,$ $R>0.$ After suitably rescaling the metric on the target $\mathbb{H}^2$, the maps converge in an appropriate sense as $R\to\infty$ to a harmonic map to the $\mathbb{R}$-tree for $\phi$ (see \cite{W} for details).

Quadratic differentials $\phi_1,\dots, \phi_n$ sum to $0$ if and only if the equivariant map to $(\mathbb{H}^2)^n$ determined by $(R\phi_1,\dots, R\phi_n)$ is minimal. Upon rescaling, as $R\to\infty$, the minimal maps to $(\mathbb{H}^2)^n$ converge to an equivariant minimal map into a product of $\mathbb{R}$-trees. We proved in \cite[Theorem B2]{MSS} that unstable minimal surfaces for $\textrm{PSL}(2,\R)^n$ converge along such rays to unstable minimal surfaces in products of trees, and conversely every unstable minimal surface in a product of trees is approximated by unstable $\textrm{PSL}(2,\R)^n$-minimal surfaces. 

To produce unstable minimal maps to products of trees, we observed that when all the $\phi_i's$ are squares, each $\R$-tree folds onto a copy of $\R$, and the minimal map to the product projects onto a minimal map to $\R^n$ that is equivariant for a representation of $\pi_1(\Sigma_g)$ to the additive group $(\R^n,+)$. We proved that instability in these two contexts is equivalent \cite[Theorem C]{MSS}. We then noted that unstable equivariant minimal surfaces in $\R^n$ are abundant for $n\geq 3$. Indeed, the most natural example is the lift of an unstable minimal surface in the $n$-torus to $\R^n$, and, in fact, any non-planar equivariant minimal surface in $\R^3$ is unstable (see \cite[section 5.3]{MSS}). 

In the general setting where we start with $n$ quadratic differentials summing to $0$, giving an equivariant minimal map into a product of trees, there is a branched covering on which the differentials lift to squares. If the corresponding equivariant minimal map from the branched cover into $\R^n$ can be destabilized through equivariant variations that are invariant under the Deck group of the branched covering, then the minimal surface in the original product of $\mathbb{R}$-trees is unstable.

\end{subsection}

\begin{subsection}{Hitchin rays and the idea of the proof}
Here we explain how to generalize the ideas of section \ref{Rtrees} to other symmetric spaces using Higgs bundles in place of Hopf differentials, and we give the strategy for Theorem A. Our overview here is non-technical, and more precise definitions are given later in the text.

Let $G$ be a  real semisimple Lie group of real rank $r$ with finite center and no compact factors. Parreau has compactified $\textrm{Rep}(\Sigma_g,G)$, with boundary objects corresponding to $\pi_1(\Sigma_g)$-actions on rank $r$ buildings \cite{Pa}. Hitchin rays, defined below, generalizes the rays $R\phi$ above. A Hitchin ray on a Riemann surface $S$ associated with a non-nilpotent $G$-Higgs bundle (defined in section \ref{1.4}) determines a ray of pairs $(\rho_R,h_R),$ $R>0,$ where $\rho_R:\pi_1(\Sigma_g)\to G$ is a representation and $h_R$ is a $\rho_R$-equivariant harmonic map from $\tilde{S}$ to a symmetric space of $G$, such that the energy is blowing up: $$\lim_{R\to\infty}\mathcal{E}(S,h_R)=\infty.$$ For large $R,$ we refer to $h_R$ as a high energy harmonic map, or a high energy minimal map if it is minimal. As pointed out in \cite[section 1.3]{KNPS}, it follows from work of Kleiner-Leeb \cite{KL}, Korevaar-Schoen \cite{KS}, and now-routine estimates that along Hitchin rays, the harmonic maps converge in an appropriate sense to a harmonic map to a building of rank $r$. Our previous work \cite{MSS} thus suggests that one could find a counterexample to the Labourie Conjecture by first finding an unstable minimal surface in a building.  It's then feasible to imagine that these minimal surfaces in buildings behave like minimal surfaces in $\R^r$. Perhaps after passing to a branched covering, the apartments of the buildings can fold onto each other in a way that produces equivariant minimal surfaces in $\R^r$. This geometric picture gives an intuitive reason to expect stability in rank $2$ and instability in rank at least $3$. The difficulty in turning this picture into a proof is that the precise nature of convergence and the behaviour of limiting harmonic maps to buildings is poorly understood in comparison to harmonic maps to $\R$-trees. While we don't know if this reasoning can lead to a proof of stability for rank $2$ Hitchin representations, this picture indeed provides the key intuition for producing unstable minimal surfaces in rank at least $3$.

We now describe the Hitchin rays in more detail for $G$ isogenous to $\mathrm{PGL}(n,\C)$, in which case $G$-Higgs bundles over a Riemann surface give rise to classical Higgs bundles as defined by Hitchin. See section 2.3 for all the definitions. Note that the rank of $\mathrm{PGL}(n,\C)$ is $n-1$. Let $S$ be a Riemann surface structure on $\Sigma_g$ with canonical bundle $\mathcal{K}$. A Higgs bundle $(E,\overline{\partial}_E, \phi)$ of rank $n$ is a holomorphic vector bundle $(E,\overline{\partial}_E)$ of rank $n$ over $S$ with a holomorphic section $\phi \in H^0(S,\textrm{End}(E) \otimes \mathcal{K})$ called the Higgs field. By the non-abelian Hodge correspondence, every pair consisting of a representation to $\textrm{PGL}(n,\C)$ and equivariant harmonic map to the $\textrm{PGL}(n,\C)$-symmetric space (including harmonic maps to the $\mathrm{PGL}(n,\R)$ subspace) arises from a unique polystable traceless Higgs bundle $(E,\overline{\partial}_E, \phi)$ of rank $n$, up to tensoring $E$ with a holomorphic line bundle, and vice versa. By scaling the Higgs field by $R>0$, we obtain a family of representations and harmonic maps that is called the Hitchin ray. Although the Higgs bundle is a tractable holomorphic object, the harmonic map is not easy to understand from the Higgs bundle data. Indeed,
the harmonic map is related to the Higgs bundle through Hitchin's self-duality equations, which form a system of $n^2$ coupled non-linear PDE's. 

Every Higgs bundle has an associated branched covering space called the (reduced, normalized) cameral cover (\cite[section 3]{KNPS}, \cite{Don}). We call any connected component a small cameral cover (Definition \ref{def: smallcameral}). Each small cameral cover comes equipped with an equivariant harmonic map $\tilde{f}$ from its universal cover to $\R^{n-1}$ (for the precise notion of equivariance see section 3.2). Up to isomorphism, both the cover and the map depend only on the point in the Hitchin base associated with $(E,\overline{\partial}_E,\phi)$ (see section \ref{sec: hitchin section}).

The Higgs bundle $(E,\overline{\partial}_E,\phi)$ is called generically semisimple if the Higgs field is semisimple on the complement of a discrete set. Fortunately, for such Higgs bundles it is possible to use maximum principle arguments to understand the local behavior of Hitchin's equations away from the branch locus of the cameral covering as $R\to \infty$. This is carried out by Mochizuki, who calls the resulting phenomenon asymptotic decoupling \cite{Mo}. 

Let $(E,\overline{\partial}_E,\phi)$ be a generically semisimple stable traceless Higgs bundle with $\textrm{tr}(\phi^2)=0$; the last condition is equivalent to the minimality of $\tilde{f}$ and every harmonic map along the Hitchin ray. Thanks to the equivariance properties, the intrinsic data (the induced metric and part of the second fundamental form) of the minimal map to the symmetric space associated to the Higgs bundle, as well as that of the map $\tilde{f}$ to $\R^{n-1}$, descend all the way to $S$, where they can be compared. We extrapolate from Mochizuki's analysis that off the branch locus, the intrinsic data of the minimal maps associated with the Higgs bundles $(E,\overline{\partial}_E,R\phi)$ converges to the intrinsic data of the map $\tilde{f}$. Unfortunately, the limiting behaviour of harmonic maps at the branch locus is yet to be understood. The local picture at these points has been worked out only for minimal surfaces associated to Lie groups of rank $2$ \cite{DW}, \cite{TW}, \cite{Evans}, \cite{LTW}.

Happily, when considering variations of mimimal surfaces, the difficulties at the branch locus are readily overcome: by the log cut-off trick, any destabilizing variation of a minimal surface can be perturbed to a destabilizing variation supported on the complement of any discrete set. The upshot of Mochizuki's work and this last observation is that we can maneuver around the technicalities of buildings, and treat high energy minimal maps as if they approximate minimal surfaces in $\R^{n-1}$. With this guiding principle, we prove in Theorem B that if the limiting minimal surface in $\R^{n-1}$ is unstable under variations invariant by a certain action, then so are the high energy minimal surfaces in the symmetric space.
This instability result is the technical step, although the computation becomes simple and natural if we work in the setting of harmonic bundles and vary by well-chosen gauge transformations.

Finally, to prove Theorem A for $\textrm{PSL}(n,\R)$, we take the following strategy. We begin with a closed Riemann surface $S$ and a pair $(\tilde{f},\chi)$ consisting of a representation $\chi:\pi_1(\Sigma_g)\to (\R^{n-1},+)$ and an unstable $\chi$-equivariant minimal map $\tilde{f}=(\tilde{f}_1,\dots, \tilde{f}_{n-1}): \tilde{S}\to \R^{n-1}$. It is possible to choose $\tilde{f}$ to be unstable if and only if $n\geq 4$. For every $i$, the $(1,0)$-part of the derivative of $\tilde{f}_i$ descends to a holomorphic $1$-form $\phi_i$ on $S$. We call the $\phi_i$'s the Weierstrass data of $\tilde{f}$. Minimality gives that the squares of the $\phi_i$'s sum to zero. This Weierstrass data can be assembled into a Cartan-valued holomorphic $1$-form for $\mathfrak{sl}(n,\C)$, and evaluating adjoint-invariant polynomials on this form defines a point in the Hitchin base (see sections \ref{sec: hitchin section} and \ref{sec: unstable minimal surfaces}). Applying the Hitchin section to this point returns a traceless Higgs bundle $(E,\overline{\partial}_E,\phi)$ such that $\textrm{tr}(\phi^2)=0$. Since the Hitchin component is connected, applying the non-abelian Hodge correspondence along the Hitchin ray yields a ray of Hitchin representations. Every component of the (reduced, normalized) cameral cover identifies with $S$, and the equivariant minimal map from the universal cover of a component is the composition of $\tilde{f}$ with an isometry. Moreover, the high energy minimal maps locally approximate the original $\tilde{f}$, up to an isometry. Theorem B then shows that the high energy minimal maps are unstable, which proves Theorem A. The Hitchin section is defined for all split real $G$, and we use this to prove Theorem A in general. 
\end{subsection}

\begin{subsection}{The index of minimal maps}\label{1.4} 
 We now give a careful statement of Theorem B. To keep things classical in the introduction, we continue to focus mainly on the $\mathrm{PGL}(n,\C)$ case.

Let $(E,\overline{\partial}_E,\phi)$ be a generically semisimple stable traceless Higgs bundle. There exists $m \leq n$ and a finite subset $B$ such that for every $p\in S-B$, there is a neighbourhood $U\subset S-B$ of $p$ on which the Higgs bundle decomposes holomorphically as $E|_U=\oplus_{i=1}^m E_i$, and $\phi$ acts on each $E_i$ by multiplication by a holomorphic $1$-form $\phi_i$. Ranging over a covering of $S-B$, the locally defined projections to the $E_i$ span a holomorphic subbundle $F_\phi\subset\textrm{End}(E)|_{S-B}$ that we call the extended toral bundle of $\phi$. The extended toral bundle carries a flat metric, flat connection, and a real structure such that the projections are flat, real, and orthogonal (see section 3.2 for details). We call the bundle of traceless endomorphisms in $F_\phi$ the toral bundle, $F_\phi^0$, and the Higgs field $\phi$ is a holomorphic $F_\phi^0$-valued $1$-form. Using the real structure on $F_\phi$, the real part of $\phi$ further defines a flat Riemannian affine bundle $M_\phi$ over $S$, which we call the apartment bundle, and which comes equipped with a harmonic section $f$ whose holomorphic derivative is $\phi$. This is all constructed formally in section \ref{sec: 3.2}.

There is a Galois branched covering $\tau: C\to S$ on which the monodromy of $F_\phi$ trivializes, and this is an example of a small cameral cover (Definition \ref{def: smallcameral}). On $C-\tau^{-1}(B)$, $\phi$ admits globally defined eigen-$1$-forms $\phi_1,\dots, \phi_m$, which then extend to all of $C$ by the removable singularities theorem. Lifting to a covering space $\tilde{C}$ and integrating the real parts 
gives a harmonic map to $\R^{m}$ 
$$
\tilde{f} = (\tilde{f}_1,\dots,\tilde{f}_m), \hspace{1mm} \tilde{f}_i(z)= \sqrt{r_i} \textrm{Re}\int_{z_0}^z \phi_i,
$$
where $r_i = \mathrm{rank}(E_i)$, unique up to translation and equivariant with respect to a representation to $(\R^m,+)$, and which transforms in a certain when we act by the lift of a Deck transformation of $\tau:C\to S$.
It's tidier to work with the equivalent harmonic section of the apartment bundle $M_\phi$, but it's helpful to keep this geometric picture in mind.

Using Hitchin and Simpson's side of the non-abelian Hodge correspondence, the Higgs bundle produces a surface group representation $\rho:\pi_1(\Sigma_g)\to \textrm{PGL}(n,\mathbb{C})$ and a $\rho$-equivariant harmonic map $h$ from $\tilde{S}$ to the symmetric space of $\textrm{PGL}(n,\mathbb{C})$. This is equivalent to the data of a bundle $E$ with projectively flat connection $D$ and harmonic metric $H$ unique up to scale. If we rescale the Higgs field via the $\mathbb{R}^+$-action $R\cdot (E,\overline{\partial}_E,\phi) = (E,\overline{\partial}_E,R\phi)$, then we get new pairs of representations and maps $(\rho_R,h_R)$, or equivalently triples $(E_R,D_R,H_R)$. While our results concern the maps $h_R,$ for the problem at hand it's easier to compute with the metrics $H_R$.

The index of a minimal section of a flat Riemannian bundle (or equivalently an equivariant minimal map) is the maximal dimension of a space of variations on which the second variation of area is negative definite (see section 3.1). A minimal surface is stable if the index is zero, and unstable otherwise. For each $R$, the harmonic map $h_R$ to the symmetric space is minimal if and only if the harmonic section $f$ is minimal (if and only if the harmonic map $\tilde{f}$ to $\R^m$ is minimal). Assuming minimality of $f$, let $\textrm{Ind}(H_R)$ be the equivariant index for the area of $h_R$, and $\textrm{Ind}(f)$ the index of $f$ as a section of $M_\phi$, which is the same as the equivariant index for the area of $\tilde{f}$. These indices are rigorously defined in section \ref{sec: 3.2}. The index $\textrm{Ind}(H_R)$ agrees with the index of the energy functional on Teichm{\"u}ller space associated to the holonomy $\rho_R$ \cite[Theorem 3.4]{Ej}.

\begin{thmbp}
$\liminf_{R\to\infty} \textrm{Ind}(H_R) \geq \textrm{Ind}(f)$.
\end{thmbp}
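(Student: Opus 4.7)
The plan is to produce, for all sufficiently large $R$, a subspace of equivariant variations of $h_R$ of dimension $k:=\textrm{Ind}(f)$ on which the second variation of area is strictly negative. Begin with a $k$-dimensional destabilizing subspace $W$ of variations of $f$ regarded as a section of $M_\phi$. By the log cut-off trick, each $V\in W$ may be replaced by a variation supported on a compact subset $K\subset S-B$ at the cost of an arbitrarily small perturbation of the index form, so that $W$ remains destabilizing; via the real structure on $F_\phi^0$, each $V$ is then identified with a compactly supported Hermitian-valued section of $F_\phi^0|_K \subset \textrm{End}^0(E)|_K$.

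To transport such a $V$ to a variation of $h_R$, we exploit the inclusion $F_\phi^0\subset\textrm{End}^0(E)$: the section $V$ is an infinitesimal Hermitian endomorphism of $(E,H_R)$, and deforming the harmonic metric by $H_R\mapsto H_R\exp(tV)$ yields a one-parameter family of equivariant maps $h_R^t$ whose initial velocity $V_R$ lies in the domain of the equivariant index form for $h_R$. Because the support is contained in $S-B$, we entirely sidestep the analytic difficulties of convergence to buildings at branch points. The theorem follows once we show that, after an appropriate $R$-dependent normalization, the second variation of area of $h_R$ at $V_R$ converges to the second variation of area of $f$ at $V$; a uniform lower bound on the negativity across the finite-dimensional $W$ then yields $k$ linearly independent destabilizing directions for all large $R$.

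The heart of the argument, and the main obstacle, is verifying this convergence of second variations. The index form at $h_R$ decomposes into a Dirichlet-type term in the covariant derivative of $V_R$ and a curvature term involving the Riemann tensor of $N$ and the holomorphic derivative of $h_R$. By Mochizuki's asymptotic decoupling \cite{Mo}, on any compact $K\subset S-B$ the metric $H_R$ is asymptotically diagonal in the local holomorphic eigendecomposition $E|_K=\bigoplus_i E_i$, so $\partial h_R$ becomes asymptotically valued in the maximal flat corresponding to the apartment. The curvature of $N$ in directions transverse to this flat then acts trivially on $V_R$ in the limit, and both terms in the index form reduce to their flat-space counterparts for $\tilde{f}$. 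The delicate task is tracking the leading-order $R$-dependence: the weights $\sqrt{r_i}$ in the definition of $\tilde{f}$ and the normalizations of $V_R$ must be calibrated so that the two sides of the limit match. Given the explicit bounds supplied by Mochizuki, this reduces to an algebraic computation in the local eigenframe, which is where the harmonic-bundle/gauge-theoretic viewpoint pays off over attempting a direct geometric comparison in the symmetric space.
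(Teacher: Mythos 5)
Your proposal follows essentially the same route as the paper: compactly supported toral-bundle variations of $f$ are transported to ($H_R$-self-adjoint) variations of the harmonic metrics via the inclusion $F^0_\phi\subset\mathrm{End}^0(E)$, Mochizuki's exponential decoupling gives convergence of the index forms (this is Proposition \ref{prop:limddota}), and a finite-dimensional quadratic-form convergence argument (Lemma \ref{indexlem}) turns that into the index inequality. The one point your sketch glosses over is that no $R$-dependent normalization is actually needed, and the potentially dangerous $R^2$ factor multiplying the curvature term is controlled precisely because toral sections commute with $\phi$ exactly, while the defect $V^{*_{H_R}}-V^{\dagger}$ decays exponentially in $R$.
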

In particular, if $f$ is unstable, then so is $h_R$ for sufficiently large $R$. 

We turn now to the general case of a semisimple Lie group $G$ acting on a symmetric space $N$, and a generically semisimple stable $G$-Higgs bundle $(P, A^{0,1}, \phi)$. Here $(P, A^{0,1})$ is a holomorphic $K^\C$-bundle and $\phi$ is a holomorphic $1$-form valued in $P \times_{K^\C} \p^\C$. As in the classical case, there is an $\R^+$-family of representations and harmonic maps, or $G$-harmonic bundles $(M_R,h_R)$, in which $M_R$ is the bundle with fiber $N$ associated to $P$, which inherits a flat connection, and $h_R$ is a harmonic section. 

The right generalization of the toral bundle $F^0_\phi$ to this setting is what we call the $G$-toral bundle of $\phi$, whose fiber is the center of the centralizer of $\phi$ in $P \times_{K^\C}\p^\C$. As in the $\mathrm{PGL}(n,\C)$ case, this is really a vector bundle away from some bad set $B$, and its rank is at most the rank of $G$. From the toral bundle, we build the $G$-apartment bundle $M^G_\phi$, together with its harmonic section $f$ (section \ref{sec: G apartment}). In the case where $f$ is minimal (equivalently, every harmonic section $h_R$ is minimal), we prove

\begin{thmb}
$\liminf_{R\to\infty} \textrm{Ind}(h_R) \geq \textrm{Ind}(f)$.
\end{thmb}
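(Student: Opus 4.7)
The plan is to generalize the strategy behind Theorem B for $\mathrm{PGL}(n,\C)$ to arbitrary real semisimple $G$ by recasting destabilizing variations of $f$ as gauge transformations of the $G$-harmonic bundle valued in the $G$-toral bundle $F_\phi^G$. Concretely, fix an integer $k < \textrm{Ind}(f)$ and, by definition of the equivariant index of $f$, choose compactly supported sections $v_1, \ldots, v_k$ of $F_\phi^G$ spanning a subspace on which the second variation $Q_f$ is negative-definite. Since $B$ is a finite set, the log cut-off trick allows me to take the $v_i$ to be supported on a compact $K \subset S - B$ with only arbitrarily small loss of negativity.

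The second step is to convert each $v_i$ into a $\pi_1(\Sigma_g)$-equivariant variation of $h_R$ for every $R$. Because $F_\phi^G \subset P \times_{K^\C} \p^\C$, a real section of $F_\phi^G$ determines, via the harmonic metric on $P$, an infinitesimal self-adjoint endomorphism of the $G$-harmonic bundle, and hence a variation vector field along $h_R$ that is automatically equivariant with respect to the flat connection on $M_R$. Writing the Jacobi-type operator for the area of $h_R$ in terms of Hitchin's self-duality equations, I expect the second variation along the $v_i$ to decompose as $Q_f$ plus cross terms that, because $v_i$ lies in the center of the centralizer of $\phi$ and so commutes with $\phi$, are forced into the non-toral root spaces of the eigenspace decomposition of $\phi$.

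The third step is to control those non-toral cross terms uniformly on $K$ as $R \to \infty$. Here I would appeal to Mochizuki's asymptotic decoupling, extended from the classical Higgs bundle setting of \cite{Mo} to $G$-Higgs bundles: on any compactum in $S - B$, the $H_R$-curvature and higher derivatives of $\phi$ diagonalize in the $\phi$-eigenspaces, with the off-diagonal components suppressed by a factor that decays uniformly in $R$. Feeding this into the formula from the previous step yields $Q_{h_R}(v_i, v_j) \to Q_f(v_i, v_j)$ uniformly, so the second variation of $h_R$ on $\mathrm{span}(v_1, \ldots, v_k)$ is negative-definite for all sufficiently large $R$. Therefore $\textrm{Ind}(h_R) \geq k$ eventually, and letting $k \nearrow \textrm{Ind}(f)$ gives the theorem.

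The hardest step will be the second, rather than the third. The $G$-version of the algebraic identity identifying the toral directions as those annihilated by $\mathrm{ad}(\phi)$, and so localizing the error terms to non-toral root spaces, is what makes the gauge ansatz work at all; without it the quadratic form $Q_{h_R}$ has no a priori reason to converge to $Q_f$. I expect the main technical work to consist of carefully bookkeeping the root-space decomposition of $P \times_{K^\C} \p^\C$ along the image of $f$, verifying that the relevant Simpson--Mochizuki estimates extend with the right uniformity to the $G$-setting, and checking that the equivariance of the constructed variations survives the cut-off procedure of Step 1.
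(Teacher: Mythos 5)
Your overall mechanism is the right one and matches the paper's in spirit: take sections of the toral bundle, project to their self-adjoint parts with respect to the harmonic reduction, feed them into the second variation, use $[v_i,\phi]=0$ to control the curvature term, and conclude by a limit-of-quadratic-forms argument. The genuine gap is in your Step 3. The decoupling estimates you invoke are proved in \cite{Mo} for classical Higgs bundles (and the paper only needs the mild extension in Theorem \ref{thm: mochizuki} to blocks on which $\phi$ acts by a scalar); an extension of these estimates to arbitrary $G$-Higgs bundles, with the uniformity you need, is exactly what you defer and never supply, and it is the step the paper deliberately avoids proving. Instead, the paper fixes an essentially faithful representation $\sigma=\prod_i\sigma_i:G\to\prod_i\mathrm{SL}(n_i,\C)$ (Proposition \ref{essfaithex}) chosen so that the induced map $(G/K,\nu)\to\prod_i(\Met(\C^{n_i}),a_i\tr)$ is a totally geodesic isometry --- the constants $a_i$ are also what absorb the arbitrary invariant metric $\nu$ in the semisimple case, an issue your direct approach would have to confront as well --- pushes the $G$-Higgs bundle forward to classical (poly)stable Higgs bundles $(E_i,\dbar_{E_i},\phi_i)$, runs the Mochizuki estimates there, and transfers the index of $h_R$ through the embedding via Proposition \ref{prop: tot geodesic index}.

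Once you make that reduction (currently the only available route to the analytic input), a further step appears for which your outline has no counterpart: the limiting object you can reach is the section $f^\sigma$ of the fiber product of the apartment bundles of the $\phi_i$, not the section $f$ of the $G$-apartment bundle, and you must prove $\mathrm{Ind}(f)=\mathrm{Ind}(f^\sigma)$. The paper does this by comparing $\sigma^{-1}(F^\sigma_\phi)$ with $F^G_\phi$ via the refinement of the root stratification of $\aaa^\C$ by the roots of $\sigma$, together with the log cut-off trick (the critical sets satisfy $B\subset B^\sigma$) and Proposition \ref{prop: tot geodesic index} again. Two smaller cautions: the curvature term in the second variation carries a factor of $R^2$, so off-diagonal terms ``suppressed by a factor that decays uniformly in $R$'' are not good enough --- you need decay beating $R^2$, and Mochizuki supplies exponential decay $e^{-\epsilon R}$; and passing from negative definiteness of the limiting form on $\mathrm{span}(v_1,\dots,v_k)$ to $\mathrm{Ind}(h_R)\ge k$ requires the (easy but necessary) injectivity argument of Lemma \ref{indexlem}, since a priori the induced variations of $h_R$ could be linearly dependent.
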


The proof almost follows directly from the case of $\mathrm{PGL}(n,\C)$ by using the adjoint representation of $G$. One minor complication arises from our insistence, in the semisimple case, of allowing an arbitrary invariant metric on the symmetric space $N$.

\begin{remark}
By the work of Toledo \cite{Tol}, the associated energy functional on Teichm{\"u}ller space is plurisubharmonic. This implies that $\textrm{Ind}(h_R)$ is at most $3g-3$. %How do you feel about remarking here that by Ros, each Hitchin component has a minimal surface whose index is at least $(2g-1)/3$. I think it's nice to point out range of possibilities, but I don't know how much explanation you would want of why it's true.
\end{remark}
\begin{remark}\label{rem: index behaviour}
   Theorems B2 and C from \cite{MSS} together yield that for products of Fuchsian representations into $G=\textrm{PSL}(2,\R)^n$, $\textrm{Ind}(h_R)$ is non-decreasing with $R$ and limits to $\textrm{Ind}(f).$ The proof of Theorem B2 relies on the Reich-Strebel formula, which is a tool particular to $\textrm{PSL}(2,\R)^n$. For general Higgs bundles, it would be very interesting to study how $\textrm{Ind}(h_R)$ changes with $R$. If $\textrm{Ind}(h_R)$ increases to $\textrm{Ind}(f)$ for Hitchin representations, then this would give another proof of the Labourie Conjecture in rank $2$.
\end{remark}

\begin{remark}
    The key to deducing Theorem A from Theorem B is that the Hitchin map (usually called the Hitchin fibration, see section \ref{sec: hitchin section}) restricted to the Hitchin component is surjective. If $\mathcal{H}$ is any other higher Teichm{\"u}ller space with the same property, then the proof of Theorem A goes through for $\mathcal{H}$. For example, this holds for Collier's components for the split real group $\textrm{SO}(n,n+1)$ \cite{Brian}. It is likely that this property or a slightly weaker version of it holds for all known higher Teichm{\"u}ller spaces (see \cite{BCGGO}). 
\end{remark}

\end{subsection}

\begin{subsection}{Acknowledgements}
We would like to thank Nicolas Tholozan, Eugen Rogozinnikov, Max Riestenberg, Brian Collier, and Jérémy Toulisse for helpful conversations, and Mike Wolf in particular for pointing out the connection between \cite{M2} and minimal surfaces in $\R^n$. We also thank the referee for many suggestions that significantly improved the paper. Nathaniel Sagman is funded by the FNR grant O20/14766753, \textit{Convex Surfaces in Hyperbolic Geometry.}
\end{subsection}
\end{section}

\begin{section}{Preliminaries}
\begin{subsection}{Hermitian metrics} 
For a complex vector space $V$, we write $V^{\vee}$ for its dual and $\overline{V}$ for its complex conjugate. Similarly, for a linear map of complex vector spaces $f:V\to W,$ we write $f^\vee:W^\vee \to V^\vee$ for the transpose and $\overline{f}:\overline{V}\to\overline{W}$ for the conjugate. A Hermitian metric $H$ on $V$ is a linear isomorphism $H: V\to \overline{V}^\vee$ such that $H=\overline{H}^\vee$ and the associated sesquilinear form is positive definite. If $f\in \textrm{End}(V)$ and $H$ is a Hermitian metric on $V$, then the adjoint with respect to $H$ is $f^{*_H}=H^{-1}\overline{f}^\vee H:V\to V$.

Let $\mathrm{Met}(V)$ be the space of Hermitian metrics on $V$ and $\Met(V)$ its quotient by positive real scaling. We will use $[H]$ to denote the equivalence class of a metric $H$ in $\Met(V)$. The group $GL(V)$ acts transitively on $\mathrm{Met}(V)$ via 
\begin{equation} \label{eqn: action}
g\cdot H = {\overline{g}^{\vee}}^{-1} H g^{-1} 
\end{equation}
and this descends to a transitive action of $G = \mathrm{PGL}(V)$ on $\Met(V)$. The stabilizer of a class $[H] \in \Met(V)$ is a maximal compact subgroup $K_H\subset G$.

Let $\mathfrak{g}$ be the Lie algebra of $G$, which we identify with the traceless endomorphisms $\textrm{End}^0(V)$. The space $\mathrm{Met}(V)$ is an open subset of the real vector space $\mathrm{Hom}(V, \overline{V}^\vee)_\R$ of maps equal to their conjugate transpose. If $Y \in \textrm{End}(V)$, write $Y^0$ for its traceless part. The $\g$-valued $1$-form $\varphi$ on $\mathrm{Met}(V)$ defined by
\begin{equation} \label{eqn: varphi}
   \varphi_{H} = -\frac{1}{2}(H^{-1}dH)^0 
\end{equation}
is basic with respect to scaling, and hence descends to a $1$-form on $\Met{V}$ that we also call $\varphi$. If $X \in \mathfrak{g}$, then the derivative of the action $\eqref{eqn: action}$ on $\mathrm{Met}(V)$ is given by
\[
X \cdot H = -\overline{X}^\vee H - H X.
\]
Combining the last two equations, we find
\begin{equation} \label{eqn: varphi of action}
  \varphi_{[H]}(X \cdot [H]) = \frac{1}{2}(X^{*_H} + X).
\end{equation}
Note that the operator $*_H$ depends only on the projective equivalence class $[H]$.

Let $\mathrm{End}_H^0\subset \mathfrak{g}$ be the space of $H$-self-adjoint endomorphisms of $V$. Then $\varphi_{[H]}(X \cdot [H])$ projects $X$ to $\mathrm{End}_H^0$, and in fact $\varphi$ defines an isomorphism from $T\Met(V)$ to the bundle over $\Met(V)$ whose fiber over a metric $[H]$ is $\mathrm{End}_H^0$.

The Killing metric on $\Met(V)$ is defined to be \begin{equation} \label{eqn: killing}
2n\tr(\varphi\otimes \varphi).
\end{equation}
In the sequel, we will omit the tensor product $\otimes$ from our notation, writing this as $2n \tr(\varphi^2)$, which is to be read as a bilinear form. This metric is $G$-invariant, and exhibits $\Met(V)$ as a symmetric space of non-compact type. Every $G$-invariant metric on $\Met(V)$ is a constant multiple of the Killing one.

To simplify the statement of the following proposition, we use $\varphi$ to identify $T\Met(V)$ with the subbundle $\mathrm{End}_H^0$ of the trivial $\g$ bundle over $\Met(V)$ whose fiber over $[H]$ is the subspace $\mathrm{End}_H^0$. The Lie bracket and differential $d$ are defined on this trivial $\g$-bundle.

\begin{prop}[Theorem X.2.6 in \cite{KN}] \label{prop: grad and R} For sections $X,Y,Z$ of $\mathrm{End}_H^0$, implicitly identified with vector fields on the symmetric space, the Levi-Civita connection $\nabla^{lc}$ is given by
\begin{equation} \label{eqn: nabla lc}
    \nabla_X^{lc} Y= d_X Y - [X,Y]
\end{equation}
and the curvature of $\nabla^{lc}$ is given by
\[
R(X,Y)Z = -[[X,Y],Z].
\]
\end{prop}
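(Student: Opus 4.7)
The plan is to verify that the formula $\nabla_X Y := d_X Y - [X, Y]$ defines a torsion-free and metric-compatible linear connection on $T\Met(V) \cong \mathrm{End}_H^0$. By uniqueness of the Levi-Civita connection this gives the first identity, and then the curvature is a direct computation.

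The essential input is a Maurer--Cartan identity for $\varphi$. Starting from the definition $\varphi = -\frac{1}{2}(H^{-1}dH)^0$ and the identity $d(H^{-1}dH) = -(H^{-1}dH) \wedge (H^{-1}dH)$, a short calculation shows that $d\varphi = [\varphi \wedge \varphi]$ as $\mathfrak{g}$-valued $2$-forms. Expanding both sides via the standard formula $d\varphi(X, Y) = d_X \varphi(Y) - d_Y \varphi(X) - \varphi([X,Y]_{\text{VF}})$ yields the key identity
\[
\varphi([X, Y]_{\text{VF}}) = d_X Y - d_Y X - 2[X, Y],
\]
which will drive the torsion and curvature computations. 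One first checks that $\nabla_X Y$ takes values in $\mathrm{End}_H^0$; this follows by differentiating the $H$-self-adjointness condition $Y = Y^{*_H}$ along $X$, which gives $(d_X Y)^{*_H} = d_X Y - 2[X, Y]$, combined with the bracket relation $[\mathrm{End}_H^0, \mathrm{End}_H^0] \subset \mathfrak{k}_H$ that makes $[X, Y]$ $H$-anti-self-adjoint.

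Torsion-freeness $\nabla_X Y - \nabla_Y X = [X, Y]_{\text{VF}}$ is then immediate from the Maurer--Cartan identity and antisymmetry of the bracket. Metric compatibility for the Killing metric $g = 2n\tr(\,\cdot\,\cdot\,)$ reduces to the identity $\tr([X, Y]\, Z) + \tr(Y\, [X, Z]) = 0$, which is a restatement of the cyclic invariance of trace. By uniqueness of the Levi-Civita connection, $\nabla = \nabla^{lc}$.

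For the curvature, expanding $R(X, Y) Z = \nabla_X \nabla_Y Z - \nabla_Y \nabla_X Z - \nabla_{[X, Y]_{\text{VF}}} Z$ produces second-derivative terms $d_X d_Y Z - d_Y d_X Z$ and mixed terms involving $[d_X Y - d_Y X, Z]$; both cancel against the corresponding terms in $\nabla_{[X, Y]_{\text{VF}}} Z$ after substituting the Maurer--Cartan identity. The surviving purely bracket terms collapse to $[X, [Y, Z]] - [Y, [X, Z]] - 2[[X, Y], Z] = -[[X, Y], Z]$ by the Jacobi identity. The one step requiring real care is verifying the Maurer--Cartan identity with the correct signs; everything else is book-keeping.
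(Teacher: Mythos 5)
Your argument is correct, and its key identities check out against the paper's conventions: with $\varphi = -\tfrac{1}{2}(H^{-1}dH)^0$ one does get $\varphi([\hat X,\hat Y]_{\mathrm{VF}}) = d_XY - d_YX - 2[X,Y]$ for the $\varphi$-identified vector fields, the differentiated self-adjointness relation $(d_XY)^{*_H} = d_XY - 2[X,Y]$ shows that $d_XY - [X,Y]$ is precisely the $H$-self-adjoint (hence tangent) part of $d_XY$, metric compatibility is cyclicity of the trace, and the curvature computation collapses via Jacobi to $-[[X,Y],Z]$. Your route is genuinely different from the paper's, however: the paper gives no proof at all, simply invoking Theorem X.2.6 of \cite{KN}, i.e.\ the general theory of symmetric/naturally reductive homogeneous spaces, which is also what lets the authors assert in Section 2.4 that the same formulas hold verbatim for an arbitrary symmetric space $N = G/K$ without further work. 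Your direct verification is self-contained and has the concrete virtue of pinning down the signs and the $-\tfrac{1}{2}$ normalization attached to the specific form $\varphi$ on $\Met(V)$ (exactly where an appeal to the literature can silently mismatch conventions), at the cost of being tied to the linear model $\Met(V)$: for general $N$ one would either redo the computation with the generalized $\varphi$ or still fall back on the citation. One small gap of presentation rather than substance: before invoking uniqueness of the Levi-Civita connection you should record (one line) that $(X,Y)\mapsto d_XY - [X,Y]$ is tensorial in $X$ and satisfies the Leibniz rule in $Y$, so that it is indeed a linear connection on the bundle $\mathrm{End}_H^0$ identified with $T\Met(V)$.
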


\begin{remark}
Although the metric is canonical only up to a constant, the Levi-Civita connection and curvature are independent of this choice.
\end{remark}

\end{subsection}

\begin{subsection}{Harmonic maps, bundles, and minimal surfaces}
Let $S$ be a possibly non-compact Riemann surface and $h: S \to N$ a smooth map from $S$ to a Riemannian manifold $N$. Let $\mathcal{K}$ be the canonical bundle of $S$, and let $\partial h$ be the $(1,0)$-part of the derivative of $h$, which is a section of $h^*TN \otimes_\R \mathcal{K}$. Let $\dbar$ be the $(0,1)$-part of the connection on $h^*TN \otimes_\R \C$ induced from the Levi-Civita connection of $N$, as well as its natural extension to $h^*TN\otimes_\R \C$-valued forms.

\begin{defn} \label{dfn: harmonic}
The map $h$ is harmonic if $\overline{\partial}\partial h = 0$. 
\end{defn}

If $\nu = \langle \cdot, \cdot \rangle$ is the Riemannian metric on $N$, the Hopf differential of $h$ is the section of $\mathcal{K}^2$ defined by $\langle \partial h, \partial h \rangle$. We call $h$ conformal if $\langle \partial h, \partial h \rangle = 0$. We eschew words like almost conformal and branched conformal, instead calling a classically conformal map a conformal immersion. 

\begin{defn} \label{dfn: minimal}
A harmonic and conformal map is called a minimal map.
\end{defn}

The area density of $h$ is the non-negative $(1,1)$-form on $S$ given by
\begin{equation}\label{adens}
    a_h = \sqrt{\langle \partial h, \overline{\partial} h \rangle ^2 - |\langle \partial h, \partial h \rangle |^2},
\end{equation}
where the expression inside the square-root is interpreted as a real section of $\mathcal{K}^2 \otimes \overline{\mathcal{K}}^2$. If $S$ is closed, we write the area of $h(S)$ as 
\begin{equation}\label{ar}
    A(h) = \int_S a_h.
\end{equation}
Analogously, we define the energy density $e_h = \langle \partial h, \dbar h \rangle$ and total energy $\mathcal{E}(S,h) = \int_S e_h$. For arbitrary $S$, $h$ is harmonic if and only if it is a critical point of $\mathcal{E}(S,\cdot)$ with respect to compactly supported variations on all compact subsurfaces, and if $h$ is harmonic, then it is minimal if and only if it is a critical point of $A$ in this sense. In particular, a minimal map defines a minimal surface in the classical sense if it is an embedding. 

\begin{remark}
A minimal map $h$ is allowed to be constant. If it is non-constant, then since $\partial h$ is a holomorphic section of a holomorphic vector bundle, its vanishing locus is discrete. By conformality, $h$ is an immersion wherever $\partial h$ is nonzero. Therefore, a nonconstant minimal map is an immersion away from a discrete set. 
\end{remark}

\begin{subsubsection}{Flat Riemannian bundles} \label{sec: flat Riemannian}
Most of the time, for us $S$ will be a closed Riemann surface. If $N$ is a Riemannian manifold, $\tilde{S}$ is a universal covering of $S$, and $\rho: \pi_1(S) \to \mathrm{Isom}(N)$ is an action by isometries, then a $\rho$-equivariant map from $\tilde{S}$ to $N$ is the same as a section of the bundle $\tilde{S} \times_\rho N = \{(p,n) \in \tilde{S} \times N / (p,n) \sim (p \gamma, \rho(\gamma^{-1}) n)\}$. This bundle carries the structure of a flat Riemannian bundle, by which we mean a fiber bundle together with a smooth fiberwise Riemannian metric and a flat isometric connection. We think of the connection as an Ehresmann connection (see \cite{Eh}). Conversely, every flat Riemannian bundle $M \to S$ can be written in the form $\tilde{S} \times_\rho N$, since upon fixing a universal cover of $S$, one can recover $N$ as the space of flat sections of the pullback $\tilde{S} \times_S M$. Following Donaldson \cite{D}, we will often find it more convenient to work with sections of flat Riemannian bundles rather than with equivariant maps.

Let $M \to S$ be a flat Riemannian bundle, and let $h$ be a section. We say that $h$ is minimal (or harmonic or conformal respectively) if the corresponding equivariant map from a universal cover of $S$ is minimal (resp. harmonic, conformal). 

We can also characterize minimality directly using the vertical tangent bundle $T^\mathrm{vert}M$ of $M$. This bundle comes equipped with a Riemannian metric and a connection lifted from the Levi-Civita connection on the fibers of $M$ using the integrable foliation transverse to the fibers. If $h$ is a section of $M$, we interpret $\partial h$ as a $1$-form on $S$ valued in $h^*T^\mathrm{vert}M$ and $\dbar$ in terms of the connection on $T^\mathrm{vert}M$. Having done so, the original definitions of harmonic and conformal still make sense. Naturally, the formulas (\ref{adens}) and (\ref{ar}) still define the area density and area for sections of flat Riemannian bundles, and minimal sections are still critical points of area. The condition that $h$ is an immersion is potentially ambiguous; the correct condition is that the area form $a_h$ (defined with respect to the vertical tangent bundle) is nonvanishing. It remains true that a minimal section is an immersion away from a discrete set.

\end{subsubsection}

\begin{subsubsection}{Harmonic and minimal bundles}
Let $E$ be a complex vector bundle over $S$. A connection $D$ on $E$ is called projectively flat if its curvature is equal to $\omega \mathrm{Id}$, where $\omega$ is a smooth $2$-form. Since the identity is in the center of $\mathrm{End}(E)$, a projectively flat connection on $E$ induces a flat connection on $\mathrm{End}(E)$.

If $(E,D)$ is a projectively flat bundle, we define the projectivized bundle of metrics $\Met(E)$, whose fiber at a point $z$ is $\Met(E(z))$. A choice of invariant metric $\nu$ on $\Met(E)$ makes it into a flat Riemannian bundle. Unless we specify $\nu$, we will always take the Killing metric (\ref{eqn: killing}). The projectively flat connection $D$ defines a flat connection on $\Met(E)$. In this way, $\Met(E)$ has the structure of a flat Riemannain bundle. 

The main object of study in this paper will be triples $(E, D, H)$, where $E$ is a vector bundle over $S$, $D$ a projectively flat connection, and $H$ a smooth Hermitian metric. Given such a triple, let $[H]$ be the corresponding section of $\Met(E)$. The first part of the following definition is standard, but the second part is not.

\begin{defn}
$(E,D,H)$ is a harmonic bundle, and $H$ is a harmonic metric, if the associated section $[H]$ of $\Met(E)$ is harmonic. If $[H]$ is moreover minimal, we call $(E,D,H)$ a minimal harmonic bundle.
\end{defn}
Note that the definition of minimal would be equivalent had we chosen any other scaling of the metric on $\Met(E)$, and that the definition of harmonic does not reference the metric at all, only the Levi-Civita connection.

In the introduction, we mentioned the existence theorem for harmonic maps of Donaldson \cite{D} and Corlette \cite{C}. Rephrasing in terms of harmonic metrics, this gives one side of the non-abelian Hodge correspondence. 
A representation $\rho:\pi_1(\Sigma_g)\to\textrm{PGL}(n,\mathbb{C})$ is called reductive if its Zariski closure is a reductive subgroup of $\textrm{PGL}(n,\mathbb{C})$. The representation is irreducible if its holonomy is not contained in a non-trivial parabolic subgroup of $\textrm{PGL}(n,\mathbb{C})$. Here we're using irreducible in the sense that it defines a smooth point in the character variety. 
\begin{thm}[Non-abelian Hodge correspondence I]\label{NAHI}
Let $(E,D)$ be a projectively flat vector bundle of rank $n$ over a closed surface $S$, and suppose that the holonomy representation is reductive. Then there exists a harmonic metric $H$ on $(E,D)$. If the holonomy is irreducible, then $[H]$ is unique.
\end{thm}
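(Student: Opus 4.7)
The plan is to reformulate the problem equivariantly and then invoke the standard Donaldson--Corlette existence theorem for equivariant harmonic maps into the symmetric space $\Met(V)$. By the discussion in sections \ref{sec: flat Riemannian}--\ref{sec: flat Riemannian} (or rather by the bundle/equivariance correspondence just established), a Hermitian metric $H$ on $(E,D)$ corresponds to a section $[H]$ of the flat Riemannian bundle $\Met(E)\to S$, which in turn lifts to a $\rho$-equivariant map $\tilde{h}:\tilde{S}\to \Met(V)$, where $\rho$ is the projective holonomy of $D$ and $V$ is the model fiber. The condition that $(E,D,H)$ is a harmonic bundle is precisely the condition that $\tilde{h}$ is a harmonic map in the usual sense. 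So the theorem reduces to: given a reductive representation $\rho:\pi_1(S)\to\mathrm{PGL}(V)$, there exists a $\rho$-equivariant harmonic map $\tilde{S}\to\Met(V)$, unique if $\rho$ is irreducible.

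For existence, I would pick any $\rho$-equivariant smooth map $\tilde{h}_0:\tilde{S}\to\Met(V)$ (which exists since $\Met(V)$ is contractible), and try to minimize the equivariant energy $E(\tilde{h})=\int_S e_{\tilde h}$ over all $\rho$-equivariant maps in the same equivariant homotopy class. The target $\Met(V)$ is a complete, simply connected Riemannian manifold of non-positive sectional curvature (by Proposition \ref{prop: grad and R}, the curvature is $-[[\cdot,\cdot],\cdot]$, which is nonpositive in the Killing form). Standard theory of energy minimization on $\mathrm{CAT}(0)$ targets (or equivalently the harmonic map heat flow) shows that either a minimizing sequence converges subsequentially to an equivariant harmonic map, or else the maps escape to infinity in $\Met(V)$. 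The main obstacle, and the key use of the hypothesis, is to rule out the escape scenario: the precise statement of Donaldson--Corlette is that escape to infinity forces $\rho$ to fix a non-trivial parabolic subgroup, contradicting reductivity. One obtains this by analyzing the limit of the renormalized center-of-mass / barycenter of $\tilde{h}_n$ on the visual boundary $\partial_\infty \Met(V)$ and showing $\rho$ must stabilize a point there, which translates to $\rho$ being contained in a parabolic subgroup of $\mathrm{PGL}(V)$.

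For uniqueness, suppose $\tilde{h}_1,\tilde{h}_2$ are two equivariant harmonic maps. The function $d(\tilde{h}_1,\tilde{h}_2)^2:\tilde{S}\to\R$ descends to a function on $S$ because of equivariance, and the classical Eells--Sampson Bochner formula (applied with the non-positive curvature of $\Met(V)$) shows it is subharmonic. Since $S$ is compact, $d(\tilde{h}_1,\tilde{h}_2)$ is constant, and the Bochner identity then forces the geodesic from $\tilde{h}_1$ to $\tilde{h}_2$ at each point to be parallel with respect to the pullback connection. This produces a flat parallel section of a non-trivial subbundle of $\mathrm{End}^0(E)$, equivalently a nonzero element of $\mathrm{End}^0(V)$ commuting with $\rho$; by Schur, this contradicts irreducibility unless $\tilde{h}_1=\tilde{h}_2$.

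The hard part is the existence, and in particular the precise connection between escape to infinity and parabolic subgroups. I would cite \cite{C} and \cite{D} for the complete argument rather than redo it; the purpose of sketching the proof is to make clear that the hypotheses enter exactly at the two steps above: reductivity prevents blowup in the minimization (existence), and irreducibility kills the centralizer (uniqueness).
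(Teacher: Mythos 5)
Your proposal is correct and matches the paper's treatment: the paper does not prove Theorem \ref{NAHI} at all, but states it as the classical existence/uniqueness result of Donaldson \cite{D} and Corlette \cite{C}, rephrased via the equivalence between metrics on $(E,D)$ and equivariant maps to $\Met(V)$. Your sketch of the standard argument (energy minimization with reductivity ruling out escape to a boundary fixed point, and uniqueness via convexity of distance plus Schur's lemma) is the accepted proof, and deferring the details to \cite{C} and \cite{D} is exactly what the paper does.
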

Throughout the paper, we use the abbreviation NAH for the non-abelian Hodge correspondence. We will often implicitly choose a representative $H$ of $[H]$ and write that $(E,D,H)$ is determined by NAH I, even though only $[H]$ is uniquely defined. Since we only ever use the operator $*_H$, which depends only on $[H]$, we hope the reader will excuse this laziness.

\end{subsubsection}

\end{subsection}

\begin{subsection}{Higgs bundles}\label{2.3}
We owe an interpretation of the Definitions \ref{dfn: harmonic} and \ref{dfn: minimal} of harmonic and minimal in the context of triples $(E,D,H)$. First we need a good description of $[H]^* T^\mathrm{vert}\Met(E)$. This is provided by the endomorphism-valued $1$-form $\varphi$ defined in \eqref{eqn: varphi}. On each fiber $E(z)$, $\varphi$ is an $\mathrm{End}^0(E(z))$-valued $1$-form on $\Met(E(z))$, which for each $[H'] \in \Met(E(z))$, identifies $T_{[H']} \Met(E(z))$ with $\mathrm{End}^0_{H'}(E(z))$, the space of $H'$-self-adjoint endomoprhisms of $E(z)$. Note the complexification of $\mathrm{End}^0_{H'}(E)$ is just $\mathrm{End}^0(E)$, so $\varphi$ identifies the complexification of the tangent space at each point $[H']$ with $\mathrm{End}^0(E)$.

In particular, if $(E,D,H)$ is a flat bundle with a metric, and $[H]$ is the corresponding section of $\Met(E)$, then the pullback $\varphi_H := [H]^*\varphi$ of $\varphi$ by the section $[H]$ identifies $[H]^*T^{\mathrm{vert}}\Met(E)$ with $\mathrm{End}^0_H(E)$, and therefore identifies the space of variations of the section $[H]$ with the space of sections of $\mathrm{End}^0_H(E)$ over $S$. 
Since $\varphi_H$ is an $\mathrm{End}(E)$-valued $1$-form on $S$, we can define a connection on $E$ by
\begin{equation}\label{eqn:nablaH}
\nabla^H = D - \varphi_H.
\end{equation}
This induces the connection $D - [\varphi_H, \cdot]$ on $\mathrm{End}(E)$, which we also write as $\nabla^H$. If $X$ and $Y$ are sections of $\mathrm{End}_H^0(E)$ (which we implicitly identify with $[H]^*T^{\mathrm{vert}}(M)$), then using equation \eqref{eqn: varphi of action} and the fact that $X$ is already equal to its $H$-self-adjoint part, we have
\[
\nabla^H_X Y = D_X Y - [X,Y].
\]
Comparing this with the formula \eqref{eqn: nabla lc} in Proposition \ref{prop: grad and R}, we conclude
\begin{prop} \label{prop: connections agree}
The subbundle $End^0_H(E)$ of traceless $H$-self-adjoint endomorphisms is invariant by $\nabla^H$, and the restriction of $\nabla^H$ to this bundle is the pullback by the section $[H]$ of the Levi-Civita connection on $T^\mathrm{vert}\Met(E)$.
\end{prop}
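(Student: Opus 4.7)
The proof of the proposition is essentially a direct comparison of two formulas that have both already been set up. The derivation in the paragraph preceding the statement yields, for sections $X, Y$ of $\mathrm{End}^0_H(E)$,
\[
\nabla^H_X Y = D_X Y - [X, Y],
\]
while Proposition \ref{prop: grad and R} gives the Levi-Civita connection on $\Met(V)$ as
\[
\nabla^{lc}_X Y = d_X Y - [X, Y]
\]
for vector fields $X, Y$ identified with $\g$-valued functions via $\varphi$. My plan is to match these two formulas, term by term, under the bundle isomorphism $\varphi_H$ and the flat bundle structure of $\Met(E)$, and then deduce the invariance statement as a consequence.

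The key step is to verify that in a local flat trivialization $\Met(E)|_U \cong U \times \Met(V)$, which exists because $\Met(E) \to S$ is a flat Riemannian bundle, the pullback of the trivial $\g$-derivative $d$ on $\Met(V)$ along the section $[H]|_U: U \to \Met(V)$ coincides with the restriction of $D$ acting on $\mathrm{End}^0(E)|_U$. Both are trivial connections in the chosen trivialization, so this identification is immediate. Since the Lie bracket term $[X, Y]$ is the Lie bracket in $\g = \mathrm{End}^0(V)$ on either side, the pulled-back Levi-Civita connection on $[H]^* T^{\mathrm{vert}}\Met(E)$ and the restriction of $\nabla^H$ to $\mathrm{End}^0_H(E)$ act identically under the identification $\varphi_H$.

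Having matched the connections, the invariance of $\mathrm{End}^0_H(E) \subset \mathrm{End}^0(E)$ under $\nabla^H$ follows automatically rather than as a separate check: the Levi-Civita connection tautologically preserves $T^{\mathrm{vert}}\Met(E)$, so its pullback preserves $[H]^* T^{\mathrm{vert}}\Met(E)$, and this transfers via $\varphi_H$ to preservation of $\mathrm{End}^0_H(E)$ by $\nabla^H$. The main subtlety is in unpacking the meaning of the pullback Levi-Civita on $[H]^* T^{\mathrm{vert}}\Met(E)$ globally over $S$, since Proposition \ref{prop: grad and R} was stated fiberwise on a single symmetric space; passing to local flat trivializations resolves this, because the flat connection on $\Met(E)$ horizontally glues the fiberwise formulas into a globally defined one that matches $\nabla^H = D - \varphi_H$ on the nose.
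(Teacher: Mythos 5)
Your argument is essentially the paper's own proof: the paper derives $\nabla^H_X Y = D_X Y - [X,Y]$ from \eqref{eqn: varphi of action} and the $H$-self-adjointness of $X$, and then concludes by comparing with \eqref{eqn: nabla lc} exactly as you do, with invariance of $\mathrm{End}^0_H(E)$ implicit in that comparison. Your extra care with local flat trivializations is a fine (and correct) way of making the identification of $D$ with the fiberwise $d$ precise, but it is the same route.
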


Proposition \ref{prop: connections agree} allows us to express harmonicity of $(E,D,H)$ in terms of the connection $\nabla^H$. Using the Riemann surface structure on $S$, let $\phi$ be the $(1,0)$-part of $\varphi_H$, and let $\dbar_E$ be the $(0,1)$-part of the connection $\nabla^H$. Recall also that the Killing metric on $\mathrm{End}^0_H(E)$ is given by $\langle X, Y \rangle = 2n\tr(XY)$. Definitions \ref{dfn: harmonic} and \ref{dfn: minimal} translate to the result below.

\begin{prop}
$(E,D,H)$ is a harmonic bundle if $\dbar_E \phi = 0$. It is a minimal bundle if in addition $\tr(\phi^2) = 0$.
\end{prop}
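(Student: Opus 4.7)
The plan is to unpack Definitions \ref{dfn: harmonic} and \ref{dfn: minimal} for the section $[H]$ of the flat Riemannian bundle $\Met(E)$, transporting them through the identification provided by $\varphi_H$ and then invoking Proposition \ref{prop: connections agree} to replace the abstract Levi-Civita connection with the concrete connection $\nabla^H$. Since each step is a direct unwinding of definitions, there is no serious obstacle; the only thing to keep straight is the type decomposition and the fact that $\nabla^H$ preserves the real subbundle $\mathrm{End}^0_H(E)$, so it is legitimate to speak of $\dbar_E \phi$ either on $\mathrm{End}^0_H(E)$ or on its complexification $\mathrm{End}^0(E)$.

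First, I would recall that $\varphi_H = [H]^*\varphi$, read via \eqref{eqn: varphi of action}, yields a bundle isomorphism $[H]^* T^{\mathrm{vert}} \Met(E) \xrightarrow{\sim} \mathrm{End}^0_H(E)$ of real vector bundles over $S$. By construction, the $1$-form $\partial[H]$, viewed as a section of $[H]^* T^{\mathrm{vert}} \Met(E) \otimes_{\R} \mathcal{K}$, is carried by this isomorphism to $\phi$, the $(1,0)$-part of $\varphi_H$, viewed (after complexifying $\mathrm{End}^0_H(E)$ to $\mathrm{End}^0(E)$) as a section of $\mathrm{End}^0(E) \otimes \mathcal{K}$. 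This is essentially a tautology given the construction of $\varphi_H$.

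Second, by Proposition \ref{prop: connections agree}, the pullback by $[H]$ of the Levi-Civita connection on $T^{\mathrm{vert}}\Met(E)$ agrees with the restriction of $\nabla^H$ to $\mathrm{End}^0_H(E)$. Taking $(0,1)$-parts with respect to the Riemann surface structure on $S$, the operator $\dbar$ used in Definition \ref{dfn: harmonic} is identified with the $(0,1)$-part $\dbar_E$ of $\nabla^H$. Applying this to $\partial[H] \leftrightarrow \phi$ gives the correspondence $\dbar \partial[H] \leftrightarrow \dbar_E \phi$, so that Definition \ref{dfn: harmonic} reads $\dbar_E \phi = 0$. This proves the characterization of harmonic bundles (in fact as an equivalence).

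Third, for the conformality condition, I would evaluate the Hopf differential $\langle \partial[H], \partial[H]\rangle$ using the Killing metric \eqref{eqn: killing}: under the $\varphi_H$-identification, this pairing on $\mathrm{End}^0_H(E)$ is $\langle X, Y\rangle = 2n\tr(XY)$. Hence, as a section of $\mathcal{K}^2$, the Hopf differential equals $2n \tr(\phi^2)$, and vanishes iff $\tr(\phi^2) = 0$. Combined with the previous step and Definition \ref{dfn: minimal}, this gives the stated minimality criterion. The whole argument is simply a dictionary between the intrinsic geometry of the section $[H]$ of $\Met(E)$ and the algebraic data $(E,D,H,\phi)$, with Proposition \ref{prop: connections agree} supplying the crucial compatibility of connections.
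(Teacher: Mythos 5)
Your proposal is correct and matches the paper's own (implicit) argument: the paper derives this proposition exactly by the dictionary you describe, identifying $\partial[H]$ with $\phi$ via $\varphi_H$, using Proposition \ref{prop: connections agree} to replace the pulled-back Levi-Civita connection by $\nabla^H$ (so $\dbar$ becomes $\dbar_E$), and computing the Hopf differential as $2n\tr(\phi^2)$ with the Killing metric. There is nothing missing; your observation that the harmonicity characterization is in fact an equivalence is also consistent with the paper's setup.
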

Since we're working on a Riemann surface, the Koszul-Malgrange theorem guarantees the existence of a complex structure on $E$ with del-bar operator $\overline{\partial}_E$. This motivates the introduction of Higgs bundles.
\begin{defn}
A Higgs bundle on $S$ is the data $(E,\overline{\partial}_E,\phi),$ where $(E,\overline{\partial}_E)$ is a holomorphic vector bundle of rank $n$ and $\phi$ is a holomorphic section of $\textrm{End}(E)\otimes \mathcal{K}$ called the Higgs field. We call the Higgs bundle traceless if $\textrm{tr}(\phi)=0.$
\end{defn}
In this context, Theorem \ref{NAHI} says that every projectively flat bundle $(E,D)$ with reductive holonomy gives the data of a traceless Higgs bundle whose holomorphic structure is given by the $(0,1)$ part of \eqref{eqn:nablaH}. The second part of the non-abelian Hodge correspondence, Theorem \ref{NAHII} below, gives a converse. Assume now that $S$ is a closed Riemann surface. If $\deg(E)$ is the degree of $E$ and $\textrm{rank}(E)$ the rank, the slope of a complex vector bundle $E$ over $S$ is the quantity $$\mu(E) = \frac{\textrm{deg}(E)}{\textrm{rank}(E)}.$$
\begin{defn}
A Higgs bundle $(E,\overline{\partial}_E,\phi)$ is stable if for all proper $\Phi$-invariant holomorphic subbundles $E'\subset E$ of positive rank, we have $\mu(E')<\mu(E)$. It is polystable if it is a direct sum of stable Higgs bundles all of the same slope.
\end{defn}
We sometimes write slope-stable so as to not cause confusion with stability for minimal surfaces. The proposition below is clear from the definitions, and we record it for future use.
\begin{prop}\label{scale}
If $(E,\overline{\partial}_E,\phi)$ is (poly)-stable, then for $\gamma \in\mathbb{C}^*,$ $(E,\overline{\partial}_E,\gamma\phi)$ is (poly)-stable.
\end{prop}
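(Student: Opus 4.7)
The plan is essentially just to unpack the definition. Stability of a Higgs bundle $(E,\overline{\partial}_E,\phi)$ is a condition about $\phi$-invariant holomorphic subbundles $E' \subset E$ of positive rank, and the first observation I would make is that the notion of $\phi$-invariance is invariant under rescaling $\phi$ by a nonzero complex scalar. Indeed, for any holomorphic subbundle $E' \subset E$, the containment $\phi(E') \subset E' \otimes \mathcal{K}$ holds if and only if $(\gamma\phi)(E') = \gamma\cdot \phi(E') \subset E' \otimes \mathcal{K}$ for $\gamma \in \C^*$, since multiplying by a nonzero scalar preserves containments of subsheaves. Hence the collection of $\phi$-invariant holomorphic subbundles of $E$ equals the collection of $\gamma\phi$-invariant holomorphic subbundles.

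Having established that, the stability inequality $\mu(E') < \mu(E)$ refers only to $E$ and $E'$ as holomorphic vector bundles and makes no reference to the Higgs field, so it is automatically preserved. Therefore every test subbundle for the stability of $(E,\overline{\partial}_E,\gamma\phi)$ is also a test subbundle for the stability of $(E,\overline{\partial}_E,\phi)$, and vice versa, with the same slope inequality; stability of one is equivalent to stability of the other. There is no real obstacle — the statement is a tautology once one notes that $\gamma$ is a unit in the ring of global scalars and that slope is a purely bundle-theoretic invariant.
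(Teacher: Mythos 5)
Your proof is correct and matches the paper's reasoning: the paper gives no written argument, stating the proposition is clear from the definitions, and your unpacking (a subbundle is $\phi$-invariant if and only if it is $\gamma\phi$-invariant for $\gamma\in\C^*$, while slopes are independent of the Higgs field) is exactly that observation made explicit.
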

The converse to Theorem \ref{NAHI} is due to Hitchin \cite{Hi} and Simpson \cite{Si}. Given a Hermitian metric $H$ on a holomorphic bundle $(E,\overline{\partial}_E)$, let $\partial^H$ be the unique $(1,0)$-connection with the property that $\overline{\partial}_E+\partial^H$ is the Chern connection for $H$. 
\begin{thm}[Non-abelian Hodge correspondence II]\label{NAHII}
If a Higgs bundle $(E,\overline{\partial}_E,\phi)$ is polystable, then there exists a Hermitian metric $H$ on $E$ such that $(E,\overline{\partial}_E+\partial^H + \phi + \phi^{*_H})$ is a projectively flat bundle. If it is stable, the harmonic section $[H]$ of $\Met(E)$ is unique.
\end{thm}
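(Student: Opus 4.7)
The strategy is to reformulate projective flatness of the sought-after connection as an elliptic PDE for $H$, namely Hitchin's self-duality equation, and then to solve that PDE by running the Donaldson heat flow as adapted to Higgs bundles by Simpson, with stability playing the role of the global coercivity hypothesis.

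First, I would unpack what projective flatness means for $D = \overline{\partial}_E + \partial^H + \phi + \phi^{*_H}$. Decomposing the curvature $F_D$ by Hodge type, the $(2,0)$-part is $\partial^H \phi$, the $(0,2)$-part is $\overline{\partial}_E \phi^{*_H}$, and the $(1,1)$-part is $F_H + [\phi, \phi^{*_H}]$, where $F_H$ denotes the curvature of the Chern connection $\overline{\partial}_E + \partial^H$. Holomorphicity $\overline{\partial}_E \phi = 0$ kills the $(2,0)$-part, and taking the $H$-adjoint kills the $(0,2)$-part. Hence $D$ is projectively flat precisely when $F_H + [\phi, \phi^{*_H}]$ is a multiple of $\mathrm{Id}_E$, which in standard form is Hitchin's self-duality equation $F_H + [\phi, \phi^{*_H}] = c\,\omega\,\mathrm{Id}_E$, for a fixed K{\"a}hler form $\omega$ on $S$ and a constant $c$ pinned down by Chern--Weil.

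Next I would establish existence. Fix a background metric $H_0$ and evolve $H_t = H_0 s_t$, with $s_t$ an $H_0$-positive self-adjoint endomorphism, by the parabolic equation $s_t^{-1}\partial_t s_t = -i\Lambda(F_{H_t} + [\phi, \phi^{*_{H_t}}]) + c\,\mathrm{Id}_E$, whose stationary solutions are exactly the solutions of the self-duality equation. Short-time existence is standard; the crux is a uniform $C^0$ bound on $\log|s_t|$. If this bound failed, an Uhlenbeck--Yau type rescale-and-limit argument would extract from the $s_t$ a non-trivial weakly holomorphic $\phi$-invariant subsheaf of $(E,\overline{\partial}_E,\phi)$ whose slope is at least $\mu(E)$, contradicting stability. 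Granted the $C^0$ bound, elliptic bootstrapping upgrades this to smooth subsequential convergence as $t \to \infty$, and the limit is a harmonic metric satisfying the equation.

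For uniqueness, suppose $H_1, H_2$ are both harmonic and write $H_2 = H_1 s$ with $s$ positive and $H_1$-self-adjoint. A Bochner-type calculation produces a subharmonic quantity built from $s$ (the classical choice is $\tr(s) + \tr(s^{-1})$); the maximum principle on the closed surface $S$ then forces $s$ to be $\nabla^{H_1}$-parallel and to commute with $\phi$. The simultaneous eigenspaces of $s$ are therefore $\phi$-invariant holomorphic subbundles of the same slope as $E$, so stability forces $s$ to be a positive scalar multiple of $\mathrm{Id}_E$, giving $[H_1]=[H_2]$. The main obstacle throughout is the $C^0$ estimate in the existence step: this is where the global, Higgs-bundle-adapted stability condition must be leveraged via the Uhlenbeck--Yau construction of a weakly holomorphic destabilizing subsheaf, and it is this step that genuinely distinguishes Hitchin--Simpson from the classical Donaldson--Uhlenbeck--Yau theorem.
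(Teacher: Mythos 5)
The paper does not prove this theorem at all: it is quoted as the Hitchin--Simpson side of the non-abelian Hodge correspondence, with a citation to \cite{Hi} and \cite{Si}, so there is no in-paper argument to compare yours against. Your outline is the standard proof from those references, in Simpson's heat-flow formulation (Hitchin's original argument on a Riemann surface proceeds somewhat differently, via a minimization and Uhlenbeck compactness, but the reduction to the self-duality equation, the role of stability through an Uhlenbeck--Yau destabilizing subobject, and the uniqueness-via-simplicity step are common to both), and the overall structure --- reformulation as $F_H + [\phi,\phi^{*_H}] = c\,\omega\,\mathrm{Id}_E$, existence by the Donaldson--Simpson flow with the $C^0$ estimate as the crux, uniqueness by a maximum principle plus simplicity of stable Higgs bundles, yielding $[H_1]=[H_2]$ in $\Met(E)$ --- is correct.

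One bookkeeping slip in your first paragraph is worth fixing. The $(2,0)$-part of $F_D$ is $\partial^H\phi$ (plus $\phi\wedge\phi$) and the $(0,2)$-part is $\overline{\partial}_E\phi^{*_H}$ (plus $\phi^{*_H}\wedge\phi^{*_H}$); on a Riemann surface these vanish identically for degree reasons, not because of holomorphicity. What the hypothesis $\overline{\partial}_E\phi=0$ and its $H$-adjoint $\partial^H\phi^{*_H}=0$ actually kill are the extra contributions to the $(1,1)$-part, which a priori is $F_H + [\phi,\phi^{*_H}] + \overline{\partial}_E\phi + \partial^H\phi^{*_H}$. After this correction your conclusion stands: given a holomorphic Higgs field, $D=\overline{\partial}_E+\partial^H+\phi+\phi^{*_H}$ is projectively flat precisely when $F_H+[\phi,\phi^{*_H}]$ is a scalar multiple of the identity, which is the equation your flow solves. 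With that repaired, the sketch is a faithful account of the cited theorem, though of course the hard analytic content (short-time existence, the Uhlenbeck--Yau construction of the weakly holomorphic $\phi$-invariant destabilizing subsheaf, and the bootstrapping) is only invoked, not supplied --- which is appropriate here, since the paper itself treats the result as known.
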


As stated, the NAH is a correspondence between traceless Higgs bundles and projectively flat bundles. Tensoring the projectively flat bundle with a smooth line bundle with connection does not change the representation into $\mathrm{PGL}(n,\C)$, but has the effect of tensoring the Higgs bundle with a holomorphic line bundle. Thus, the NAH also gives a correspondence between representations and Higgs bundles up to tensoring with a line bundle. Note that tensoring with a line bundle changes the degree by a multiple of the rank and does not affect stability

\begin{remark} \label{rmk: polystable} 
If $(E, \dbar_E, \phi)$ is strictly polystable, i.e., it is a direct sum of $k>1$ stable Higgs bundles $(E_i, \dbar_{E_i}, \phi_i)$ of the same slope, then there is a $(k-1)$-parameter family of choices of harmonic section of $\Met(E)$. The different choices are related by the choice of relative scalings of the $k$ factors $\Met{E_i}$. In particular, the corresponding harmonic maps to the symmetric space are equal up to isometry.

The pullback to $M = \prod_{i=1}^k \Met(E_i)$ of the Killing form on $\Met(E)$ is independent of the choice of relative scaling. Note that because of the normalization, it is not the product of the Killing forms on $\Met(E_i)$. In fact, $M$ has a $k$-dimensional family of invariant metrics $\nu$, corresponding to rescaling the invariant metric on each factor. We emphasize that the definition of a minimal section of $M$ depends on the choice of $\nu$. In section \ref{GHigg} we will work with a general invariant metric $\nu$ on $M$, and so we will avoid embedding $M$ into $\Met(E)$. 
\end{remark}

\begin{defn}
We will say that a Higgs bundle $(E,\overline{\partial}_E,\phi)$ is minimal with respect to an invariant metric $\nu$ if $\nu(\phi^2)=0.$
\end{defn}
\end{subsection}

\begin{subsection}{Symmetric spaces and $G$-Higgs bundles}\label{2.4}
If the reader is only interested in Theorem B for $\mathrm{PGL}(n,\C)$, then this subsection will be skipped. Toward Theorem A for $\mathrm{PSL}(n,\R)$, it's not necessary to read this subsection too carefully: we just use the basic definitions from this subsection in discussing the Hitchin map in section \ref{sec: hitchin section}.

\begin{subsubsection}{Symmetric spaces of non-compact type}

To us, a symmetric space of non-compact type is a connected and simply connected Riemannian manifold with an inversion symmetry about each point, whose de Rham decomposition contains only non-compact symmetric spaces, and no factors of $\R$. The isometry group of a symmetric space of non-compact type is always semisimple with no compact factors and trivial center. Such groups are our primary interest, but in order to make it easy to apply the results to other standard settings, we allow certain subgroups and non-faithful actions as well. 
\begin{defn}
A Lie group $G$ is admissible if it is connected and semisimple with finite center and no compact factors. An admissible pair $(G,K)$ is an admissible Lie group $G$ together with a maximal compact subgroup $K$.
\end{defn}

\begin{defn}
An action of an admissible Lie group $G$ is essentially faithful if its kernel is discrete. 
\end{defn}

If $(G,K)$ is an admissible pair, we will always write $\g = \kk \oplus\p$ for the corresponding Lie algebra decomopsition. Let $N$ be the pointed space $G/K$.
Using a generalization of the $\g$-valued 1-form $\varphi$, one can show that $G$-invariant metrics on $N$ are in bijection with $K$-invariant positive definite bilinear forms on $\p$. Any choice of metric makes $N$ into a symmetric space of non-compact type with an essentially faithful transitive $G$-action.

\begin{defn}An admissible triple $(G,K,\nu)$ is an admissible pair $(G,K)$ together with a $K$-invariant positive definite bilinear form $\nu$ on $\p$.
\end{defn}

If $N$ is a symmetric space of non-compact type, and $h$ a point of $N$, this determines canonically a Lie algebra $\g$ of Killing fields and a decomposition $\g = \kk_h \oplus\p_h$, with $\kk_h$ the Killing fields vanishing at $h$, as well as the metric $\nu$. The pair $(G = \mathrm{Aut}_0(N), K = \mathrm{Stab}_G(h))$ is admissible with these Lie algebras, but $N$ does not uniquely determine the Lie group $G$.

\begin{remark}\label{ssform} If $G$ is simple, then $\g$ has a unique invariant bilinear form up to scale. For the purpose of studying minimal surfaces in $N$, scaling the metric is unimportant. However, if $N$ is reducible, then different scalings of its different factors determine geniunely different notions of minimal surfaces. Thus, in the reducible case $N$ contains an important additional piece of information that $G$ does not.
\end{remark}

Theorem X.2.6 of \cite{KN} proves that the formulas for the curvature and connection of $\Met(E)$ in Proposition \ref{prop: grad and R} continue to hold for $N$. Observe that these formulas only depend on $G$, and not on the choice of invariant bilinear form. Alternatively, we can deduce them by isometrically embedding $N$ as a totally geodesic subspace of a product of symmetric spaces of the type $\Met$.

\begin{prop}\label{essfaithex}
If $(G,K,\nu)$ is an admissible triple, there is a representation $\sigma = \prod_i \sigma_i: G \to \prod_i \mathrm{SL}_{n_i}(\C)$ sending $K$ to $\prod_i \mathrm{SU}(n_i)$, and constants $a_i$, such that the induced map $(G/K, \nu) \to \prod_i (\Met(\C^{n_i}), a_i \tr)$ is a totally geodesic isometry.
\end{prop}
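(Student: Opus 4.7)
The plan is to decompose $(G, K, \nu)$ into simple factors, produce a suitable faithful representation for each factor landing in $\mathrm{SL}$ with $K$ going to $\mathrm{SU}$, and then check that the induced orbit map on symmetric spaces is totally geodesic, matching up the metric scaling factor on each factor.

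First, I would write $\g = \bigoplus_i \g_i$ as a sum of simple ideals, which also decomposes the Cartan pieces $\kk = \bigoplus_i \kk_i$ and $\p = \bigoplus_i \p_i$ and (up to finite cover) the group as $G = \prod_i G_i$, $K = \prod_i K_i$. Since $\g_i$ is simple, up to positive scaling there is a unique $K_i$-invariant positive-definite bilinear form on $\p_i$ (namely the restriction of the Killing form of $\g_i$), and $\nu$ decomposes as an orthogonal direct sum of such forms $\nu_i$. It therefore suffices to build each $\sigma_i$ separately and to choose the scalar $a_i$ at the end to match $\nu_i$.

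Second, for each simple factor $G_i$ I would construct a faithful linear representation $\sigma_i: G_i \to \mathrm{SL}(V_i)$ with $\sigma_i(K_i) \subset \mathrm{SU}(V_i, H_i)$. Existence of \emph{some} faithful finite-dimensional representation follows from $G_i$ having finite center (e.g., combine the adjoint representation, which is faithful modulo the center, with a faithful representation of the finite center). Averaging any Hermitian form over the compact group $\sigma_i(K_i)$ produces an invariant $H_i$, so we land in $\mathrm{U}(V_i, H_i)$; then twisting by a character or replacing $V_i$ with $V_i \oplus V_i^{\vee}$ forces determinant one. (In the edge case where $\g_i$ is compact, so $\p_i = 0$, Peter--Weyl provides such a $\sigma_i$ directly.)

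Third, to see the induced equivariant map $G_i/K_i \to \mathrm{SL}(V_i)/\mathrm{SU}(V_i) = \Met(V_i)$ is totally geodesic, I would check the infinitesimal criterion: $d\sigma_i$ sends $\p_i$ into $\mathrm{End}^0_{H_i}(V_i) \cong \p_{\mathrm{SL}(V_i)}$, and since $\sigma_i$ is a Lie algebra homomorphism, $d\sigma_i(\p_i)$ is closed under the triple bracket $[[\cdot,\cdot],\cdot]$, i.e., it is a Lie triple system. By the standard characterization of totally geodesic submanifolds of symmetric spaces, the corresponding orbit of $\sigma_i(G_i)$ through the basepoint $[H_i]$ is totally geodesic, and the curvature and Levi-Civita connection formulas from Proposition \ref{prop: grad and R} on this orbit agree with the pullback from the ambient $\Met(V_i)$.

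Finally, I would fix the constants $a_i$. The pullback of $a_i \tr(\varphi^2)$ on $\Met(V_i)$ to $\p_i$ via $d\sigma_i$ is again a $K_i$-invariant positive-definite form on $\p_i$; by the uniqueness from the first step, there is a single positive $a_i$ making this pullback equal to $\nu_i$. Summing over $i$, the product map $\prod_i \sigma_i$ gives the required totally geodesic isometric embedding $(G/K, \nu) \hookrightarrow \prod_i (\Met(V_i), a_i \tr)$. The main place one has to be a little careful is the construction of the representations $\sigma_i$ in step two, in particular ensuring that one can simultaneously land in $\mathrm{SL}$ and send $K_i$ into $\mathrm{SU}$; but since $G$ is assumed semisimple with finite center, this is a standard application of the unitarian trick combined with a determinant twist.
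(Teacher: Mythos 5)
There is a genuine gap, and it sits exactly at the step you flag as needing care. Averaging a Hermitian form over $\sigma_i(K_i)$ only gives $\sigma_i(K_i)\subset \mathrm{U}(V_i,H_i)$; it does \emph{not} give the assertion you then rely on, namely $d\sigma_i(\p_i)\subset \mathrm{End}^0_{H_i}(V_i)$. For a noncompact simple factor the space of $K_i$-invariant Hermitian forms on $V_i$ is typically positive-dimensional, and for a generic invariant form the orbit is not totally geodesic. Concretely, for the standard $\mathrm{SL}(2,\R)\subset\mathrm{SL}(2,\C)$ the $\mathrm{SO}(2)$-invariant forms are $a|z_+|^2+b|z_-|^2$ in the eigenbasis of $\mathrm{SO}(2)$; if $a\neq b$ then $\p$ (real symmetric traceless matrices) is not $H$-self-adjoint, and the $\mathrm{SL}(2,\R)$-orbit of $[H]$ in $\Met(\C^2)\cong\mathbb{H}^3$ is an equidistant surface of the invariant copy of $\mathbb{H}^2$, which is not totally geodesic. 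So the Lie-triple-system argument and the metric-scaling argument in your third and fourth steps both need the basepoint $[H_i]$ to be compatible with the Cartan decomposition, i.e.\ $H_i$ must be chosen invariant under the compact real form generated by $\kk_i+i\,d\sigma_i(\p_i)$ (Mostow's version of the unitarian trick), not merely under $K_i$. The paper avoids this issue by an explicit choice: extend $\nu$ invariantly to $\g$, form $\nu_K$ ($=\nu$ on $\p$, $-\nu$ on $\kk$), and use the adjoint representation in a $\nu_K$-orthonormal basis; then $\mathrm{ad}(\kk)$ is skew-symmetric and $\mathrm{ad}(\p)$ is symmetric, so $K$ lands in $\mathrm{SO}(n_i)\subset\mathrm{SU}(n_i)$, $\p$ lands in Hermitian matrices, and on each simple piece the trace form pulls back to a multiple of $\nu_i$, which is where the constants $a_i$ come from.

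A second, smaller error: your claim that finite center guarantees a faithful finite-dimensional representation is false, and the suggested construction (adjoint representation combined with a representation of the finite center) does not produce a representation of $G_i$. The metaplectic group has finite center, yet every finite-dimensional representation factors through $\mathrm{Sp}(2n,\R)$. Fortunately faithfulness is not needed for the proposition: the induced map $G/K\to\prod_i\Met(\C^{n_i})$ is well defined and injective as soon as the kernel of $\sigma$ is central (hence contained in $K$), and the adjoint representation, which is what the paper uses, has kernel exactly the center. Your decomposition into simple factors and the uniqueness-up-to-scale of the invariant form on $\p_i$ (irreducibility of the isotropy representation) are fine, but only become usable after the self-adjointness of $d\sigma_i(\p_i)$ is secured as above.
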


\begin{proof}This is essentially Theorem 2.6.5 of \cite{Eb}. The $K$-invariant form $\nu$ on $\p$ extends uniquely to a $G$-invariant form on $\g$ that we also call $\nu$. It also determines a positive definite form $\nu_K$ on $\g$, which is equal to $\nu$ on $\p$ and $-\nu$ on $\kk$. Take an orthonormal basis of $\g$ with respect to the form $\nu_K$, and use the adjoint representation to map $G$ to $\mathrm{SL}({\mathrm{dim}(\g),\R)}$. Since the adjoint representation preserves the splitting of $\g$ into simple pieces $\g_i$, it actually maps to $\prod_i \mathrm{SL}(n_i, \R)$, where $n_i = \mathrm{dim}(\g_i)$. Including $\mathrm{SL}(n_i, \R)$ into $\mathrm{SL}(n_i, \C)$ gives the required representation.
\end{proof}

The most important invariant of a symmetric space $N$ of non-compact type is its rank, also called the (real) rank of $G$ or $\g$, which is the largest dimensional subspace of its tangent space at any point on which the sectional curvature vanishes. Equivalently, this is the dimension of a maximal abelian subalgebra of $\g$ contained in $\p$. We call such an algebra a maximal toral subalgebra of $\p$, using the word toral to emphasize that it necessarily consists of semisimple elements.

\subsubsection{Invariant polynomials}\label{subsec: invariant polynomials}
Let us now additionally fix a maximal toral subalgebra $\aaa$ of $\p$. The Weyl group of $\aaa$ (sometimes called the restricted Weyl group of $\aaa$) is the normalizer of $\aaa$ in $K$ modulo its centralizer. The roots of $\aaa$ (sometimes called restricted roots) are the nonzero characters of $\aaa$ that arise in its adjoint action on $\g$. We reserve the word Cartan subalgebra for a (complex) maximal toral subalgebra of the complexification $\g^\C$. The real Lie algebra $\g$ is called split if the complexification of a maximal toral subalgebra of $\p$ is a Cartan subalgebra of $\g^\C$. A Lie group $G$ is called split if its Lie algebra is split.

Let $K^\C\subset G^\C$ and $\aaa^\C\subset \p^\C\subset \g^\C$ be complexifications of $K$, $\aaa$, and $\p$ respectively. We will use the following in section 4. Let $\pK$ be the ring of $K^{\C}$-invariant complex polynomials on $\p^{\C}$, and $\aW$ the ring of $W$-invariant polynomials on $\aaa^{\C}$. The real version of the Chevalley restriction theorem says that the restriction map from $\pK \to \aW$ is an isomorphism \cite[Theorem 6.10]{Helg}, \cite[Theorem 7]{Vin}. Another theorem attributed to Chevalley asserts that $\aW$ is free on $r$ generators, where $r$ is the dimension of $\aaa$, i.e., the rank of $N$ \cite[page 54]{Hum}. Thus, $\pK$ is free on $r$ generators; for example, if $G = \mathrm{PGL}(n,\R)$, so that $\p^{\C}$ consists of traceless complex symmetric matrices and $\aaa^{\C}$ consists of traceless complex diagonal matrices, then the elementary symmetric polynomials $e_i$, $i= 2, \ldots, n$, applied to the entries of elements in $\aaa^{\C}$ determines a minimal set of generators for $\pK$.

Recall that any $X\in \p^{\C}$ has a canonical additive Jordan decomposition $X = X_s + X_n$ into commuting semisimple and nilpotent parts, with both $X_s$ and $X_n$ in $\p^{\C}$ \cite[\S 1.4]{Vin}. Since $X_s$ is semisimple it is contained in the complexification of some maximal toral subalgebra $(\aaa')^{\C}$ that is isomorphic to $\aaa^{\C}$, the isomorphism $q: (\aaa')^{\C} \to \aaa^{\C}$ being unique up to $W$ \cite[\S 3.2]{Vin}. This assignment $X\mapsto q(X_s)/W$ is dual to the Chevalley isomorphism, since if $p\in \pK,$ then $p(X) = p(q(X_s)).$

We say that an element $X \in \p^{\C}$ is in general position if the corresponding element $q(X_s)$ of $\aaa^{\C}/W$ is not in the kernel of any restricted root. This is equivalent to saying that the centralizer of $X_s$ in $\g^{\C}$ is equal to the centralizer of $(\aaa')^{\C}$ in $\g^{\C}$ \cite[\S 3.2]{Vin}. If $\g$ is split, then the restricted roots are the same as the roots of $\g^{\C}$. In this case $X$ is in general position if and only if its semisimple part is regular, which is well-known to imply that $X = X_s$. The following lemma says that this is true in the non-split case as well.

\begin{lem} If $X \in \p^{\C}$ is in general position, then it is semisimple.
\end{lem}

\begin{proof}
Applying the $K^{\C}$-adjoint-action, we can assume that the semisimple part $X_s$ lies in the complexified maximal toral subalgebra $\mathfrak{a}^{\C}$ (this is just for notational convenience). By \cite[Proposition 6.40]{Knapp} or \cite[Lemma 3.2.1]{GL}, the centralizer of $\aaa$ in $\p$ is equal to $\aaa.$ Complexifying, it follows that the centralizer of $\aaa^{\C}$ in $\p^{\C}$ is $\aaa^{\C}$. Hence, as $X$ is in general position, the centralizer of $X_s$ in $\p^{\C}$ is $\aaa^{\C}$. Since $X_n$ is in $\p^{\C}$ and centralizes $X_s$, it therefore lies in $\aaa^{\C}.$ But $\aaa^{\C}$ is the Lie algebra of an algebraic torus and hence consists entirely of semisimple elements \cite[Chapter 3, \S 8.5]{Borel}. It follows that $X_n = 0$, so $X$ is semisimple. 
\end{proof} 
The following lemmas will be used in the proof of Theorem A in section 4.
\begin{lem} \label{lem: reg semisimp} If $X \in \p^\C$ is in general position, $Y \in \p^{\C}$ is arbitrary, and every invariant polynomial $p \in \pK$ satisfies $p(X) = p(Y)$, then $X = \mathrm{Ad}_k Y$ for some $k$ in $K$.
\end{lem}
\begin{proof}
    Let $Y=Y_s+Y_n$ be the Jordan decomposition of $Y$.  According to \cite[Theorem 3]{Vin}, two points in $\mathfrak{p}^{\C}$ fail to be distinguished by invariant polynomials if and only if their semisimple parts belong to the same $K^{\C}$-adjoint-orbit. Thus, because $p(Y)=p(Y_s)$ for all invariant polynomials $p,$ $Y_s$ lies in the same $K^{\C}$-orbit as $X_s=X$. This implies that $Y_s$ is regular, and moreover the lemma above gives that $Y_n=0$. We deduce that $X$ and $Y$ live in the same $K^{\C}$-adjoint-orbit. 
\end{proof}

\begin{lem}\label{lem: same action}
    Let $Y\in \p^{\C}$ be in general position and let $k$ and $k'$ be two elements of $K$ such that $\mathrm{Ad}_k Y = \mathrm{Ad}_{k'} Y$. If $Z\in \p^{\C}$ commutes with $Y$, then $\mathrm{Ad}_k Z = \mathrm{Ad}_{k'} Z$.
\end{lem}
\begin{proof}
    This follows from Proposition 2.20.18 in \cite{Eb}, which says that if $g\in K$ and $A\in \p$ satisfy $\textrm{Ad}_{g}A=A$, then for all $E$ in the intersection of all maximal abelian subspaces of $\p$ containing $A,$ we have $\textrm{Ad}_gE=E.$ Note that there is a typo in \cite{Eb}: the word “maximal” is omitted. By breaking into real and imaginary parts, the result still holds if we take $A\in \p^{\C}.$ 
    
    In the setting of the lemma, since $Y$ is in general position, the centralizer of $Y$ in $\p^{\C}$ is a maximal abelian subspace of $\p^{\C}.$ Since the hypothesis implies that $\textrm{Ad}_{k'k^{-1}} Y=Y$, we have that for all $Z\in \p^{\C}$ commuting with $Y,$ $\textrm{Ad}_{k'k^{-1}} Z=Z$, or $\mathrm{Ad}_k Z = \mathrm{Ad}_{k'} Z$. 
\end{proof}

\end{subsubsection}

\begin{subsubsection}{$G$-Higgs bundles}\label{subsubsec: G higgs}

Let $N$ be a symmetric space of non-compact type, and let $G = \mathrm{Isom}(N)$. Let $M$ be a flat Riemannian bundle over a closed surface $S$, each of whose fibers is isometric to $N$. If $\tilde{S}$ is a universal cover of $S$, then $M$ defines, up to conjugacy, a holonomy representation from the Deck group of $\tilde{S}$ to $G$. Similar to the $\textrm{PGL}(n,\C)$ case, the bundle is reductive if the Zariski closure of its holonomy representation is a reductive subgroup of $G$. The bundle is irreducible if its holonomy is not contained in a non-trivial parabolic subgroup of $G$. 
Corlette's existence result on harmonic maps is formulated in this setting \cite{C}.
\begin{thm}[$G$-NAH I] There exists a harmonic section $h:S\to M$ if and only if the holonomy of $M$ is reductive. If the holonomy is irreducible, then $h$ is unique.
\end{thm}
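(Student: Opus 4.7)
The plan is to recast the statement in terms of equivariant maps and then invoke the classical Corlette--Donaldson theory. By the discussion of Section \ref{sec: flat Riemannian}, a section $h$ of $M$ is the same data as a $\rho$-equivariant map $\tilde{h}:\tilde{S}\to N$, where $\rho$ is the holonomy representation of $M$, and $h$ is harmonic if and only if $\tilde{h}$ is. Under this correspondence, reductivity (respectively irreducibility) of the holonomy in the bundle sense is exactly reductivity (respectively irreducibility) of the image $\rho(\pi_1(S))$ in $G=\Isom(N)$. So the theorem reduces to: a $\rho$-equivariant harmonic map $\tilde{h}:\tilde{S}\to N$ exists if and only if $\rho$ is reductive, and is unique whenever $\rho$ is irreducible.

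For the existence direction I would take an energy-minimizing sequence $\tilde{h}_n$ of smooth $\rho$-equivariant maps and extract a subsequential smooth limit. The nonpositive curvature of $N$ gives uniform local Lipschitz control on each $\tilde{h}_n$ in terms of its energy density, and elliptic regularity promotes a $C^0$ subsequential limit on a compact fundamental domain to a smooth harmonic map. Reductivity of $\rho$ is precisely what keeps the minimizing sequence from running off to infinity in $N$: if $\rho(\pi_1(S))$ were contained in a proper parabolic subgroup $P\subset G$, one could conjugate by the one-parameter unipotent flow associated to the unipotent radical of $P$ to produce a family of equivariant maps with strictly decreasing energy whose infimum is not attained. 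Conversely, if a $\rho$-equivariant harmonic map exists, this same unipotent deformation argument shows $\rho$ must be reductive.

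For uniqueness under irreducibility, suppose $\tilde{h}_0$ and $\tilde{h}_1$ are two $\rho$-equivariant harmonic maps. Because $N$ is a simply connected $\CAT(0)$ space, the geodesic homotopy $\tilde{h}_t$ between them is well defined and again $\rho$-equivariant. The equivariant energy $E(\tilde{h}_t)$ is a convex function of $t$, and since both endpoints are critical, it must be constant in $t$; hence $\tilde{h}_t$ is harmonic for every $t\in[0,1]$. Differentiating at $t=0$ produces a nonzero $\rho$-equivariant parallel vertical vector field along $\tilde{h}_0$. This exponentiates to a nontrivial Killing field on $N$ commuting with $\rho(\pi_1(S))$, generating a nontrivial reductive subgroup in the centralizer of $\rho(\pi_1(S))$ that preserves a proper totally geodesic subspace of $N$; this contradicts irreducibility of $\rho$. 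So $\tilde{h}_0=\tilde{h}_1$, and the two sections of $M$ agree.

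The main obstacle is the existence step: controlling precisely how a minimizing sequence can escape to infinity in the noncompact target $N$, and matching this escape mechanism with the failure of reductivity. These analytic estimates are the heart of Corlette's original argument, and the plan is to cite them rather than reproduce them; the other two pieces (the dictionary between sections of $M$ and equivariant maps, and the $\CAT(0)$ convexity argument for uniqueness) are essentially formal once the dictionary is in place.
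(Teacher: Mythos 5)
The paper does not prove this statement: it is Corlette's theorem, imported with the citation \cite{C}, so your reduction of sections of $M$ to $\rho$-equivariant maps $\tilde h:\tilde S\to N$ followed by an appeal to the classical theory matches the paper's treatment, and that dictionary is fine. The trouble is with the two mechanisms you sketch as if they were the proofs. In the existence direction you conflate reductivity with irreducibility: containment of $\rho(\pi_1(S))$ in a proper parabolic is the failure of \emph{irreducibility}, not of reductivity, and it does not obstruct existence --- the trivial representation, or any representation conjugate into a Levi factor, lies in a proper parabolic yet admits harmonic equivariant maps (constant maps in the trivial case), so the claimed dichotomy ``contained in a parabolic $\Rightarrow$ the energy infimum is not attained'' is false. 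Moreover, conjugating by the unipotent one-parameter flow $u_t$ produces maps equivariant for $u_t\rho u_t^{-1}$, not new $\rho$-equivariant competitors; the correct deformation is to push the map along geodesic rays toward a point $\xi$ at infinity fixed by $\rho(\pi_1(S))$, which does preserve $\rho$-equivariance, and the actual content of \cite{C} is that this strictly decreases energy to an unattained infimum exactly when $\rho$ cannot be conjugated into a Levi of $\mathrm{Stab}(\xi)$, i.e.\ when the Zariski closure is not reductive.

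In the uniqueness step, the assertion that the $\rho$-equivariant parallel vertical field along $\tilde h_0$ ``exponentiates to a nontrivial Killing field on $N$ commuting with $\rho(\pi_1(S))$'' is unjustified: a field defined and parallel only along the image of a map does not in general extend to a global Killing field. The standard argument instead shows that $x\mapsto d(\tilde h_0(x),\tilde h_1(x))$ is subharmonic and $\pi_1$-invariant, hence constant on the compact surface; the connecting geodesics then bound flat strips, so $\tilde h_1$ is the translate of $\tilde h_0$ along a parallel family of geodesics, and one obtains a nontrivial one-parameter group of transvections centralizing $\rho(\pi_1(S))$; since the centralizer of a nontrivial transvection is a Levi subgroup of a proper parabolic, this forces $\rho$ into a proper parabolic, contradicting irreducibility as the paper defines it. Note also that your closing contradiction targets the wrong notion: preserving a proper totally geodesic subspace of $N$ does not contradict irreducibility (the Fuchsian points $i_G\circ\rho$ of the Hitchin component preserve a totally geodesic hyperbolic plane yet lie in no proper parabolic); what you must produce is containment in a proper parabolic.
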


Let $Q$ be the bundle whose fiber at $z \in S$ is the $G$-torsor of isometries from $N$ to $M_z$. Then $Q$ is a $G$-principle bundle that inherits a flat connection $A$ (an equivariant $\g$-valued $1$-form satisfying some properties \cite[Chapter 2.1]{DK}) from the flat connection on $M$. If we fix once and for all a point $h_0$ in $N$ with stabilizer $K$ and corresponding decomposition $\g = \kk \oplus \p$, then a section $h$ of $M$ gives a reduction $Q^h$ of the structure group of $Q$ to $K$, whose fiber at $z$ is the subset of isometries sending $h_0$ to $h(z)$. The pullback of the vertical tangent bundle of $M$ by the section $h$ is therefore identified with $Q^h \times_K \p$. Write $\nabla^h$ for the pullback of the Levi-Civita connection by $h$ to $Q^h \times_K \p$, given by Proposition \ref{prop: grad and R}. The harmonicity of $h$ is equivalent to the holomorphicity of $\partial h$ with respect to the $(0,1)$-part of $\nabla^h$. Similarly, the conformality of $h$ with respect to the metric $\nu$ on $N$ is equivalent to the condition $\nu((\partial h)^2) := \nu(\partial h \otimes \partial h)= 0$. 

\begin{defn} We say that $(M,h)$ is a $G$-harmonic bundle if $h$ is harmonic. It is a minimal $G$-harmonic bundle with respect to a metric $\nu$ if $h$ is minimal with respect to $\nu$. 
\end{defn}

This leads us to a definition of $G$-Higgs bundles. Let $(G,K)$ be any admissible pair. Given a principle $K^{\C}$-bundle $P$ over $S,$ we write  $\mathrm{ad}\p^\C = P \times_{K^\C} \p^\C$. A holomorphic principle $K^{\C}$-bundle is equivalently a pair $(P,A^{0,1})$ consisting of a smooth principal $K^\C$-bundle on $S$ and a $(0,1)$-connection $A^{0,1}$.

\begin{defn}
A $G$-Higgs bundle is a pair $(P,A^{0,1},\phi),$ where $(P,A^{0,1})$ is a holomorphic principal $K^\C$-bundle on $S$ and $\phi$ is a holomorphic section of $\textrm{ad}\p^\C\otimes \mathcal{K}$ called the Higgs field. 
\end{defn}

The other direction of NAH is also true in this context. A reduction $P^c$ of the structure group of $P$ to $K \subset K^\C$ defines an adjoint operator $*_c$ on $\mathrm{ad}\g^\C=P\times_{K^{\C}}\g^{\C}$. The Chern connection of $A^{0,1}$ with respect to this reduction is $A^{0,1} + (A^{0,1})^{*_c}$, and its curvature $F^c$ is a real $\mathrm{ad}\kk^\C$-valued $2$-form. The connection $\theta = A^{0,1} + (A^{0,1})^{*_c} + \phi +\phi^{*_c}$ is flat if and only if the curvature $F^c$ satisfies
$$
F^c + [\phi,\phi^{*_c}]=0.
$$
Stability and polystability for $G$-Higgs bundles are defined analogously to the $\textrm{PGL}(n,\C)$ case (see \cite{G-P}). The analog of Proposition \ref{scale} remains true for $G$-Higgs bundles. The following is proved in \cite[Theorem 2.24]{GPGR}.

\begin{thm}[$G$-NAH II] 
Let $(P,A^{0,1},\phi)$ be a polystable $G$-Higgs bundle. There exists a reduction of the structure group $P^c$ such that $(P^c \times_K G,A^{0,1} + (A^{0,1})^{*_c} + \phi +\phi^{*_c})$ is a flat $G$-principal bundle (the reduction $P^c\subset P$ therefore determines a harmonic section of the associated flat Riemannian bundle with fibers isometric to $G/K$). If $(P,A^{0,1},\phi)$ is stable, the reduction is unique.
\end{thm}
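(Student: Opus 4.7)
The plan is to deduce the theorem from the classical $\textrm{PGL}(n,\C)$ case (NAH II). By Proposition \ref{essfaithex}, pick a finite-kernel representation $\sigma = \prod_i \sigma_i : G \to \prod_i \textrm{SL}(n_i, \C)$ sending $K$ into $\prod_i \textrm{SU}(n_i)$. Setting $E_i := P \times_{K^\C, \sigma_i} \C^{n_i}$ and $\phi_i = d\sigma_i \circ \phi$ produces classical Higgs bundles $(E_i, \overline{\partial}_{E_i}, \phi_i)$. The first step is to check that $G$-stability of $(P, A^{0,1}, \phi)$ forces each $(E_i, \overline{\partial}_{E_i}, \phi_i)$ to be polystable, via a Ramanathan-style argument: a $\phi_i$-invariant destabilizing subbundle of $E_i$ pulls back to a reduction of $P$ to a proper parabolic subgroup that destabilizes the $G$-Higgs bundle.

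By NAH II and Remark \ref{rmk: polystable}, each $(E_i, \overline{\partial}_{E_i}, \phi_i)$ then admits a harmonic Hermitian metric $H_i$. The central existence step is to promote the collection $\{H_i\}$ to a reduction $P^c$ of $P$ to $K$: the input data and the Hitchin equations are equivariant under the gauge group of $K^\C$, and using the uniqueness clause of NAH II one places the $H_i$ in the $K$-equivariant slice. With $P^c$ in hand, splitting the curvature of $A^{0,1} + (A^{0,1})^{*_c} + \phi + \phi^{*_c}$ by $(p,q)$-type reveals that flatness is equivalent to the $G$-Hitchin equation $F^c - [\phi, \phi^{*_c}] = 0$, which is the image under $d\sigma$ of the classical Hitchin equations satisfied by the $H_i$.

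For uniqueness, two reductions $P^c$ and $(P^c)'$ correspond to two harmonic sections of the flat bundle $P \times_{K^\C} (K^\C/K) \to S$, whose fibers are symmetric spaces of non-positive curvature. By the standard Eells-Sampson sub-harmonicity argument, the pointwise distance between them is subharmonic, hence constant on the closed surface $S$; any constant-distance pair of harmonic sections into an NPC space yields a parallel geodesic variation, which defines a reduction of $(P, A^{0,1}, \phi)$ to a proper Levi subgroup preserving $\phi$, and this is excluded by stability.

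The main obstacle is the passage from $\{H_i\}$ to a genuine $K$-reduction of $P$: when $\sigma$ is not irreducible (as is forced when $G$ is non-simple), the classical harmonic metric on $E_i$ is only determined up to a positive-dimensional automorphism group, and one must exploit this freedom to land in the $K$-equivariant slice. The cleanest route, taken in \cite{BGM}, bypasses classical NAH altogether by setting up a Donaldson functional directly on the space of $K$-reductions of $P$, treating the $G$-Hitchin equation as a moment map for the $K$-gauge action on the space of $K^\C$-connections with compatible Higgs field, and solving it by an infinite-dimensional Kempf-Ness argument parallel to Donaldson's proof of classical NAH.
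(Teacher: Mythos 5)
The paper does not actually prove this statement: it is quoted as the Hitchin--Kobayashi correspondence for $G$-Higgs bundles and attributed entirely to \cite{BGM}, so there is no internal argument of the paper to compare yours against. Judged on its own, your outline has a genuine gap at exactly the point you flag. Passing through an essentially faithful representation $\sigma$ as in Proposition \ref{essfaithex} only yields, for each factor, a \emph{polystable} classical Higgs bundle $(E_i,\overline{\partial}_{E_i},\phi_i)$, and for polystable (non-stable) objects the uniqueness clause of NAH II is not available in the form you need: the harmonic metric is determined only up to the automorphism group of the polystable object, together with the choice of splitting into stable summands and of relative scalings (cf.\ Remark \ref{rmk: polystable}). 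So the step ``using the uniqueness clause one places the $H_i$ in the $K$-equivariant slice'' has no content as written. What is actually required is an argument that some harmonic metric on the associated bundles is compatible with the tensorial structures cut out by $\sigma$ --- the Lie bracket on $\mathrm{ad}\,\g$, the real form, the reduction of structure group to $\sigma(K^\C)$ --- so that it is induced by a reduction of $P$ itself to $K$. This can be done (for instance by a fixed-point argument under the relevant involutions, in the style of Simpson's treatment of principal and real Higgs bundles), but it is precisely the hard step, and your write-up ultimately concedes it by deferring to the direct Donaldson-functional/Kempf--Ness proof of \cite{BGM}, which is the very reference the paper cites in lieu of a proof.

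Your uniqueness sketch also needs tightening. Subharmonicity gives that the distance between two harmonic reductions is constant, hence a parallel geodesic variation; the correct conclusion to draw from this is a covariantly constant, non-central infinitesimal automorphism of $(P,A^{0,1},\phi)$, contradicting the simplicity that stability provides --- the claim that one obtains ``a reduction to a proper Levi subgroup preserving $\phi$'' is not quite what falls out, even though a contradiction with stability can indeed be arranged along these lines. None of this affects the paper itself, which uses the theorem as a black box.
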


\begin{defn}
A $G$-Higgs bundle $(P,A^{0,1},\phi)$ is called minimal with respect to an invariant bilinear form $\nu$ on the symmetric space $G/K$ if $\nu(\phi^2) = 0$.
\end{defn}

Let's give a word on the relation to ordinary Higgs bundles. First, suppose that $H$ is a complex Lie group. Since we only define $G$-Higgs bundles for real Lie groups $G$, to make sense of an $H$-Higgs bundle, we must first restrict scalars from $\C$ to $\R$. The complexification of $G = \mathrm{Res}_{\C/\R}H$ is isomorphic to $H \times \overline{H}$. With repsect to any Cartan involution, both $\kk^\C$ and $\p^\C$ are isomorphic to the Lie algebra $\h$. In this way, we recover the usual notion of an $H$-Higgs bundle for a complex group $H$. 

Next, if $(P,A^{0,1},\phi)$ is a $\mathrm{PGL}(n,\C)$-Higgs bundle, the obstruction to holomorphically extending the structure group of $P$ to $\mathrm{GL}(n,\C)$ is a class in $H^2(S, \mathcal{O}^*)$, where $\mathcal{O}^*$ is the sheaf of invertible holomorphic functions. This space is trivial. We have to choose an extension, but having done so, we get a classical Higgs bundle with $E$ the associated vector bundle. A reduction of the structure group from $\mathrm{PGL}(n,\C)$ to $\mathrm{PU}(n)$ determines a Hermitian metric on $E$ up to scaling by a real-valued function on $S$.

Finally, if $G$ and $G'$ are two real semisimple groups, a homomorphism $\sigma: G \to G'$ turns a $G$-Higgs bundles into a $G'$-Higgs bundle. If $(P,A^{0,1},\phi)$ is stable, then $(\sigma(P), \sigma(\phi))$ will be polystable, but may not be stable (see \cite{BGG}). 

Putting together the previous three paragraphs, this gives a way of associating a classical Higgs bundle to a $G$-Higgs bundle provided we are given a representation of $G$ into $\mathrm{PGL}(n,\C)$. We will use this association in order to apply the asymptotic decoupling estimates to $G$-Higgs bundles, under the assumption that the representation is essentially faithful. 

\end{subsubsection}

\end{subsection}

\end{section}

\begin{section}{Convergence of the second variation along the $\mathbb{R}^+$-flow}
The purpose of this section is to prove Theorem B.
\begin{subsection}{The second variation of area}
Before studying the second variation of area for maps to symmetric spaces, we give a general formula for the second variation of area for a minimal immersion to a Riemannian manifold. Let $S$ be a Riemann surface and $h : S\times (-\epsilon, \epsilon) \to N$ a one-parameter variation of a minimal map with pullback bundle $h^* TN$. We write $h$ for the initial map and hope there is no confusion. Let $\nabla$ be the connection on $h^*TN$ obtained by pulling back the Levi-Civita connection from  $N$. We define sections $\dot{h}$ and $\ddot{h}$ of $h^*TN|_{S\times\{0\}}$ by 
$$
\dot{h}=dh\Big ( \frac{\partial}{\partial t}\Big )|_{t=0}, \hspace{1mm} \ddot{h}=\Big(\nabla_{\frac{\partial}{\partial t}} dh\Big ( \frac{\partial}{\partial t}\Big )\Big )|_{t=0}.
$$
Similarly, we have $1$-forms with values in  $h^*TN\otimes_{\R}\C$, $$
\partial \dot{h}= \Big (\nabla^{1,0} dh\Big ( \frac{\partial}{\partial t}\Big ) \Big)|_{t=0}, \hspace{1mm} \overline{\partial} \dot{h}= \Big (\nabla^{0,1} dh\Big ( \frac{\partial}{\partial t}\Big ) \Big)|_{t=0}.$$
Let $a(t)$ be the area form induced on $S$ from the map $h$ at time $t$, and let $\ddot{a} = \frac{d^2}{dt^2} \big|_{t= 0} a(t)$. In this context, the standard second variation of area formula takes the following form. We remark that we make no assumption about $\dot{h}$ being normal, i.e., that $\langle \dot{h},dh\rangle =0.$

\begin{prop}[Second variation of area] If $h$ is a minimal immersion at time 0, then the second derivative of the induced area form at time 0 is given by 
\begin{equation} \label{eqn: second var}
\frac{1}{2} \ddot{a} =  \langle \partial \dot{h}, \overline{\partial} \dot{h} \rangle + \langle R(\dot{h}, \partial h) \dot h,\overline{\partial} h \rangle - \frac{2 |\langle \partial \dot{h}, \partial h \rangle|^2}{\langle \partial h, \overline{\partial} h \rangle} + \frac{1}{2}d(\langle \ddot{h},\overline{\partial} h\rangle - \langle \partial h, \ddot{h}\rangle).
\end{equation}
\end{prop}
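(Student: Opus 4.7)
The plan is to reduce everything to computing the first and second time derivatives of the scalar quantities $e_t = \langle \partial h_t, \overline{\partial} h_t \rangle$ (energy density) and $Q_t = \langle \partial h_t, \partial h_t \rangle$ (Hopf differential), and then to package the result. I start from the identity $a_t^2 = e_t^2 - |Q_t|^2$, valid because $a_t$ is given by the square-root in \eqref{adens}. Differentiating twice and using $Q_0 = 0$ (conformality of the initial map), one finds
\begin{equation*}
a_0 \ddot a_0 \;=\; e_0 \ddot e_0 \;-\; |\dot Q_0|^2
\end{equation*}
(the $\dot a^2$ and $\dot e^2$ terms cancel because $\dot a_0 = \dot e_0$ when $Q_0 = 0$). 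Since $a_0 = e_0$ at any point where $h$ is an immersion, this gives $\tfrac{1}{2}\ddot a = \tfrac{1}{2}\ddot e - |\dot Q|^2/(2e)$ at $t = 0$, and the conformality correction $|\dot Q|^2 = 4|\langle \partial \dot h, \partial h\rangle|^2$ comes out immediately from $\dot Q = 2\langle \nabla_{\partial_t}\partial h, \partial h\rangle = 2\langle \partial \dot h, \partial h\rangle$ (using that $\nabla$ is torsion-free and $[\partial_t, X] = 0$ for $X$ pulled back from $S$).

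Next I compute $\ddot e$. Two applications of $\nabla_{\partial_t}$ and torsion-freeness give the $2\langle \partial \dot h, \overline{\partial} \dot h\rangle$ term plus two "bad" terms $\langle \nabla_{\partial_t}\partial \dot h, \overline{\partial} h \rangle$ and $\langle \partial h, \nabla_{\partial_t}\overline{\partial} \dot h\rangle$. The main input now is the Ricci identity: with $X$ a time-independent vector field on $S$,
\begin{equation*}
\nabla_{\partial_t} \nabla_X \dot h \;=\; \nabla_X \ddot h \;+\; R(\dot h, dh(X))\, \dot h,
\end{equation*}
whose $(1,0)$ and $(0,1)$ parts yield $\nabla_{\partial_t}\partial\dot h = \partial \ddot h + R(\dot h,\partial h)\dot h$ and its conjugate. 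Feeding these back in, and using the pair-swap symmetry $\langle R(X,Y)Z,W\rangle = \langle R(Z,W)X,Y\rangle$ to identify $\langle \partial h, R(\dot h,\overline{\partial} h)\dot h\rangle$ with $\langle R(\dot h,\partial h)\dot h, \overline{\partial} h\rangle$, produces the curvature term with the correct coefficient.

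The final step is to identify $\langle \partial \ddot h, \overline{\partial} h\rangle + \langle \partial h, \overline{\partial} \ddot h\rangle$ as a total differential. I view $\langle \ddot h, \overline{\partial} h\rangle$ as a $(0,1)$-form and $\langle \partial h, \ddot h\rangle$ as a $(1,0)$-form on $S$. Since $S$ is a Riemann surface, $\overline{\partial}$ applied to the $(0,1)$-form and $\partial$ applied to the $(1,0)$-form vanish for type reasons. Using harmonicity of $h$ in the form $\overline{\partial}\partial h = \partial\overline{\partial} h = 0$ (equivalent by symmetry of $\nabla dh$), a careful computation of $\partial$ and $\overline{\partial}$ on each of the two forms — and tracking the sign that comes from writing $dz\wedge d\bar z$ vs.\ $d\bar z \wedge dz$ — yields
\begin{equation*}
d\langle \ddot h, \overline{\partial} h\rangle = \langle \partial \ddot h, \overline{\partial} h\rangle, \qquad d\langle \partial h, \ddot h\rangle = -\langle \partial h, \overline{\partial} \ddot h\rangle,
\end{equation*}
so the desired sum is exactly $d(\langle \ddot h, \overline{\partial} h\rangle - \langle \partial h, \ddot h\rangle)$. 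Combining this with the earlier expression for $\ddot e$ and dividing by $2$ gives \eqref{eqn: second var}.

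The only step where I expect bookkeeping trouble is the last: distinguishing the two signs arising from $\partial$ versus $\overline{\partial}$ acting on forms of opposite type, and invoking harmonicity in both the $\overline{\partial}\partial h = 0$ and $\partial \overline{\partial} h = 0$ formats (justified by torsion-freeness of $\nabla dh$). Everything else is a direct and routine application of the curvature identity and the conformality cancellation at $t = 0$.
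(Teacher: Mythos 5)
Your proposal is correct and follows essentially the same route as the paper: compute $\ddot e$ via torsion-freeness and the pullback-curvature commutation identity, merge the two curvature terms by the pair symmetry of $R$, recognize the $\ddot h$ terms as an exact $(1,1)$-form using harmonicity in both orders of $\partial,\overline\partial$, and extract the conformality correction $-|\dot Q|^2/e$ at $t=0$. The only cosmetic difference is that you differentiate the identity $a_t^2=e_t^2-|Q_t|^2$ implicitly (using $a_0=e_0>0$ from the immersion hypothesis) where the paper Taylor-expands the square root, which yields the same $-2|\langle\partial\dot h,\partial h\rangle|^2/\langle\partial h,\overline\partial h\rangle$ term.
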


\begin{proof} From equation (\ref{adens}) we have $a(t) = \sqrt{\langle \partial h_t, \overline{\partial} h_t \rangle ^2 - |\langle \partial h_t, \partial h_t \rangle |^2}$. Set $f(t)=\langle \partial h_t, \overline{\partial} h_t \rangle$ and $g(t)=\langle \partial h_t, \partial h_t \rangle$, the latter of which satisfies $g(0)=0$ because $h$ is conformal. We Taylor expand $a(t)$ in $t$ and use $g(0) = 0$ to find that 
\begin{equation}\label{eqn: Taylor area}
    \ddot{a}= \ddot{f}- \frac{|\dot{g}|^2}{f(0)}.
\end{equation}
Toward computing the right hand side of (\ref{eqn: Taylor area}), we record that, since the Levi-Civita connection is torsion-free,
\begin{align*}
\Big (\nabla_{\frac{\partial}{\partial t}} \partial h_t\Big )|_{t=0} &= \partial \dot{h} \\
\Big (\nabla_{\frac{\partial}{\partial t}} \nabla_{\frac{\partial}{\partial t}} \partial h_t\Big )|_{t=0} &= R(\dot{h}, \partial h) \dot h + \partial \ddot{h},
\end{align*}
and similarly for $\overline{\partial} h_t$. We compute each term of (\ref{eqn: Taylor area}) separately. The second term is handled easily: $$\frac{|\dot{g}|^2}{f(0)}=\frac{4 |\langle \partial \dot{h}, \partial h \rangle|^2}{\langle \partial h, \overline{\partial} h \rangle}.$$ For the first term,
\begin{align*}
    \ddot{f} &= 2 \langle \nabla_{\frac{\partial}{\partial t}} \partial h_t, \nabla_{\frac{\partial}{\partial t}} \dbar h_t \rangle + \langle \nabla_{\frac{\partial}{\partial t}}\nabla_{\frac{\partial}{\partial t}}\partial h_t, \dbar h_t \rangle + \langle \partial h_t, \nabla_{\frac{\partial}{\partial t}}\nabla_{\frac{\partial}{\partial t}} \dbar h_t \rangle \bigg|_{t=0}\\
    &= 2\langle \partial \dot{h}, \overline{\partial} \dot{h} \rangle + \langle R(\dot{h},\partial h)\dot{h},\overline{\partial}h\rangle + \langle \partial h, R(\dot{h},\overline{\partial} h)\dot{h}\rangle +\langle \partial\ddot{h},\overline{\partial} h\rangle + \langle \partial h,\overline{\partial} \ddot{h}\rangle.
\end{align*}
By the symmetries of the Riemannian curvature tensor, the two curvature terms are equal. For the rightmost two terms, we have
\begin{align*}
    \langle \partial\ddot{h},\overline{\partial} h\rangle + \langle \partial h,\overline{\partial} \ddot{h}\rangle &= \partial\langle \ddot{h},\overline{\partial} h\rangle - \overline{\partial}\langle \partial h, \ddot{h}\rangle \\
    &= d(\langle \ddot{h},\overline{\partial} h\rangle - \langle \partial h, \ddot{h}\rangle).
\end{align*}
Putting this all together yields the proposition. 
\end{proof}

If $h$ is a minimal section of a flat Riemannian bundle $M$ over $S$ that is an immersion, then, as in section \ref{sec: flat Riemannian}, we replace the bundle $h^*TN$ by the vertical tangent bundle $h^*T^\mathrm{vert}M$, and the Levi-Civita connection on each fiber by its natural extension to the whole bundle $T^\mathrm{vert}M$. Then equation \ref{eqn: second var} still holds, as one checks in local flat charts.

If $h: S \to M$ is a minimal section that is not necessarily an immersion, define the space $\mathrm{Var}(h)$ of infinitesimal variations of $h$ to be the space of smooth sections of $h^*T^\mathrm{vert}M$ with compact support on $S$. The stability form $Q_h$ of $h$ is the quadratic form on $\mathrm{Var}(h)$ defined for $X \in \mathrm{Var}(h)$ by
\begin{equation}\label{stabform}
    Q_h(X) = 2\int_S \langle \partial X, \overline{\partial} X \rangle - \frac{2 |\langle \partial X, \partial h \rangle|^2}{\langle \partial h, \overline{\partial} h \rangle} + \langle R(X, \partial h) X,\overline{\partial} h \rangle.
\end{equation}

\begin{prop} \label{prop: second variation integral} If $h$ is a minimal section and $h(t)$ is a one-parameter variation with $\dot{h} = X \in \mathrm{Var}(h)$, and $A(t)$ the total area of a compact subsurface containing the support of $X$ and of $\ddot{h}$, then
\[
\frac{d^2}{dt^2}\bigg|_{t=0} A(t) = Q_h(X).
\]
\end{prop}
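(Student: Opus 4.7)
The plan is to integrate the pointwise second variation formula \eqref{eqn: second var} from the previous proposition over $S$ and identify the result with $Q_h(X)$. Let $B \subset S$ be the discrete locus where $\partial h$ vanishes; off of $B$ the minimal map $h$ is an immersion, so \eqref{eqn: second var} applies pointwise there. Let $K \subset S$ be the compact subsurface from the hypothesis, so $A(t) = \int_K a(t)$.

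Three of the four terms on the right-hand side of \eqref{eqn: second var} are, up to the common factor $\tfrac12$, precisely the integrand of $Q_h(X)$ in \eqref{stabform}. The fourth term is exact, and I would dispose of it by applying Stokes' theorem on $K$ minus small radius-$r$ neighborhoods $U_r$ of $B$. The boundary contribution on $\partial K$ vanishes because $\ddot h$ has compact support in the interior of $K$, and the contribution on $\partial U_r$ tends to zero as $r \to 0$ because $\partial h$ and $\overline\partial h$ vanish on $B$ while $\ddot h$ is bounded and $|\partial U_r| \to 0$.

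It then remains to justify the differentiation under the integral sign, $\ddot A(0) = \int_K \ddot a$, and the integrability of the $Q_h(X)$ integrand on $K$. The first two terms of \eqref{stabform} are manifestly smooth, while the Cauchy--Schwarz inequality
\[
|\langle \partial X, \partial h\rangle|^2 \le \langle \partial X, \overline\partial X\rangle \, \langle \partial h, \overline\partial h \rangle
\]
bounds the third term pointwise by $\langle \partial X, \overline\partial X\rangle$ near $B$; this continuous extension across $B$ also follows directly from the local holomorphic factorisation $\partial h = z^k \psi\, dz$ with $\psi$ smooth, non-vanishing, and isotropic. For the differentiation, $a(t)$ is jointly smooth on $(K \setminus B) \times (-\epsilon, \epsilon)$, giving the desired formula on $K \setminus U_r$, and a dominated-convergence argument with the same local expansion lets one pass to the limit $r \to 0$. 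The main obstacle throughout is the analysis near $B$: the pointwise formula \eqref{eqn: second var} was derived assuming $h$ is an immersion, so one has to check that every term extends integrably across the branch points and that the order of differentiation and integration can be interchanged. This is routine but requires the local holomorphic expansion of $\partial h$ at a branch point.
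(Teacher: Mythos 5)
Your proposal is correct and takes essentially the same route as the paper's proof: integrate the pointwise formula \eqref{eqn: second var} over the locus where $h$ is an immersion, check that every term (in particular the quotient term, via the local holomorphic factorization of $\partial h$ at the branch points) extends across the vanishing locus of $\partial h$, pass to the limit by dominated convergence, and eliminate the exact term by Stokes/divergence. The only cosmetic difference is that you excise shrinking neighborhoods of the branch locus before applying Stokes, while the paper invokes the divergence theorem directly after noting the smooth extension; the substance of the argument is the same.
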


\begin{proof}
The area form is non-degenerate on the open set $U\subset S$ on which $h$ is an immersion, and so we can apply the formula (\ref{eqn: second var}) on $U$. But it is easily checked that both the area form and the right hand side of the formula (\ref{eqn: second var}) smoothly extend to all of $S$. The result follows from dominated convergence. By the divergence theorem, the final term in formula \eqref{eqn: second var} vanishes upon integration.
\end{proof}

\begin{defn} We call $X$ a destabilizing variation of $h$ if $Q_h(X) < 0$. $h$ is called unstable if it has a destabilizing variation. The index $\mathrm{Ind}(h)$ of $h$ is the maximal dimension of a linear subspace of destabilizing variations.
\end{defn}

Henceforth we refer to infinitesimal variations simply as variations. We also note that the index may be infinite, but if $S$ is closed then by elliptic theory the index is finite. 

The log cutoff trick (see \cite[section 4.4]{MSS} for details and explanation) implies the following proposition, which is essential to the proof of Theorem B.

\begin{prop}\label{prop: log cutoff}
If $U = S-B$, for $B$ a discrete set, then $\mathrm{Ind}(h|_U) = \mathrm{Ind}(h)$.
\end{prop}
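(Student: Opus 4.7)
The inequality $\mathrm{Ind}(h|_U) \leq \mathrm{Ind}(h)$ is immediate: extending a compactly supported variation on $U$ by zero gives a compactly supported smooth variation on $S$, and the stability form $Q_h$ is unchanged. The content of the proposition is the reverse inequality, and the plan is to start with a $k$-dimensional space $V \subset \mathrm{Var}(h)$ of destabilizing variations and produce a nearby $k$-dimensional space of destabilizing variations supported away from $B$.

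The tool will be the \emph{log cutoff} in dimension two. Because $B$ is discrete and $S$ is a real surface, points have vanishing conformal capacity, so for each $\epsilon > 0$ I can construct a smooth function $\eta_\epsilon : S \to [0,1]$ that vanishes on a neighborhood of $B$, equals $1$ outside a slightly larger neighborhood, and satisfies $\int_S |d\eta_\epsilon|^2 \to 0$ as $\epsilon \to 0$. The explicit model is $\eta_\epsilon(z) = \chi(\log r(z) / \log \epsilon)$ in local conformal coordinates around a point of $B$, where $r$ is the Euclidean radius and $\chi$ is a fixed smooth function interpolating between $0$ and $1$; one checks that $\int|d\eta_\epsilon|^2 = O(1/|\log\epsilon|)$.

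Next I would show that for any $X \in \mathrm{Var}(h)$, $Q_h(\eta_\epsilon X) \to Q_h(X)$ as $\epsilon \to 0$. Expanding $\partial(\eta_\epsilon X) = \eta_\epsilon \partial X + (\partial \eta_\epsilon) X$ in the formula \eqref{stabform} and using the trivial estimate $0 \leq \eta_\epsilon \leq 1$ together with dominated convergence handles the curvature term and the purely $\eta_\epsilon^2$ contributions. The remaining cross terms involve $(\partial \eta_\epsilon) X$, and since $X$ is smooth with compact support (hence bounded in $L^\infty$) while $\partial \eta_\epsilon \to 0$ in $L^2$, each such cross term tends to zero by Cauchy–Schwarz. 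The singular denominator $\langle \partial h, \bar\partial h\rangle$ in the third term of \eqref{stabform} is potentially problematic where $h$ fails to be an immersion, but the extension argument used in Proposition \ref{prop: second variation integral} shows that the integrand is bounded on all of $S$, so dominated convergence still applies after the cutoff.

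Finally, given a $k$-dimensional destabilizing subspace $V \subset \mathrm{Var}(h)$, pick any basis and equip $V$ with the unit sphere for some norm. By compactness of the sphere and continuity of $Q_h$ in $X$ (combined with the uniform estimates above, applied to a basis), there exists $\epsilon$ small enough that $Q_h(\eta_\epsilon X) < 0$ for every unit $X \in V$. Moreover $\eta_\epsilon X \to X$ in $L^2$, so for small $\epsilon$ the map $X \mapsto \eta_\epsilon X$ is injective on $V$; its image $\eta_\epsilon V \subset \mathrm{Var}(h|_U)$ is then $k$-dimensional and destabilizing, proving $\mathrm{Ind}(h|_U) \geq \mathrm{Ind}(h)$. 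The main technical point is the log cutoff construction and the verification that the boundary correction terms vanish in the limit; this is precisely what fails in higher dimensions, but works in our two-dimensional setting.
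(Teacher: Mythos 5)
Your argument is correct and is exactly the log cutoff trick that the paper invokes (deferring the details to \cite[section 4.4]{MSS}): cut off near the discrete set $B$ with functions of vanishing Dirichlet energy, check that the cross terms and the term with denominator $\langle \partial h, \overline{\partial}h\rangle$ (bounded via Cauchy--Schwarz) converge, and use finite-dimensionality of the destabilizing subspace to preserve negativity and dimension. Aside from trivial slips (the singular denominator sits in the second, not third, term of \eqref{stabform}), this matches the intended proof.
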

Consequently, if $S$ is closed then $\textrm{Ind}(h|_U)$ is finite. Another elementary proposition of a similar flavor is:

\begin{prop} \label{prop: tot geodesic index} If $M$ has fiberwise nonpositive sectional curvature, $\iota: M' \to M$ is the inclusion of a subbundle that is fiberwise totally geodesic, and $h = \iota \circ h'$, then $\mathrm{Ind}(h) = \mathrm{Ind}(h')$.
\end{prop}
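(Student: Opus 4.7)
The plan is to decompose each variation $X \in \mathrm{Var}(h)$ according to the orthogonal splitting of $h^{*}TN$ induced by $N' \subset N$, show that the stability form \eqref{stabform} splits as a tangential piece matching $Q_{h'}$ plus a normal piece forced to be nonnegative by nonpositive curvature, and deduce the index equality from this decomposition.

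Because $\iota(N') \subset N$ is totally geodesic, its second fundamental form vanishes, so the orthogonal splitting $TN|_{N'} = TN' \oplus NN'$ is preserved by parallel transport along $N'$. Pulling back by $h = \iota \circ h'$ gives a parallel splitting $h^{*}TN = (h')^{*}TN' \oplus (h')^{*}NN'$. Writing $X = X^T + X^\perp$ accordingly, the parallelism guarantees that $\partial X^T$ and $\dbar X^T$ are tangential while $\partial X^\perp$ and $\dbar X^\perp$ are normal, and $\partial h, \dbar h$ are tangential since $h$ factors through $N'$. Hence $\langle \partial X^T, \dbar X^\perp\rangle = 0$ and $\langle \partial X^\perp, \partial h\rangle = 0$. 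Because $R^N$ preserves the splitting, the pair-swap symmetry $\langle R(A,B)C,D\rangle = \langle R(C,D)A,B\rangle$ forces the mixed cross terms $\langle R(X^T, \partial h)X^\perp, \dbar h\rangle$ and $\langle R(X^\perp, \partial h)X^T, \dbar h\rangle$ to vanish, and the purely tangential curvature term agrees with its $N'$-counterpart by the Gauss equation with vanishing second fundamental form. Combining these observations,
\[
Q_h(X) = Q_{h'}(X^T) + Q^\perp(X^\perp), \quad Q^\perp(Y) := 2\int_S \langle \partial Y, \dbar Y\rangle + \langle R^N(Y, \partial h) Y, \dbar h\rangle,
\]
with no contribution to $Q^\perp$ from the middle term of \eqref{stabform} since $\partial X^\perp \perp \partial h$.

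The heart of the argument is the claim $Q^\perp(Y)\geq 0$ for every normal variation $Y$. In a local conformal orthonormal frame the energy density equals $\tfrac14(|\nabla_{e_1} Y|^2 + |\nabla_{e_2} Y|^2)$, which is pointwise nonnegative. Writing $\partial h = \tfrac12(v_1 - iv_2)$, the pair-swap symmetry cancels the imaginary contributions and the curvature term reduces to $\tfrac14\sum_{i=1}^{2} \langle R^N(Y, v_i)Y, v_i\rangle = -\tfrac14\sum_{i=1}^{2}\langle R^N(Y, v_i)v_i, Y\rangle$, each summand of which is nonnegative by the nonpositive sectional curvature of $N$.

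Finally, the inclusion $\mathrm{Var}(h') \hookrightarrow \mathrm{Var}(h)$ as tangential variations preserves $Q$, giving $\mathrm{Ind}(h') \leq \mathrm{Ind}(h)$. For the reverse inequality, let $V \subset \mathrm{Var}(h)$ be any subspace on which $Q_h$ is negative definite. The map $V \to \mathrm{Var}(h')$, $X \mapsto X^T$, is injective: if $X^T = 0$ then $Q_h(X) = Q^\perp(X^\perp) \geq 0$, forcing $X = 0$. On the image, $Q_{h'}(X^T) = Q_h(X) - Q^\perp(X^\perp) \leq Q_h(X) < 0$, so the image is a negative definite subspace for $Q_{h'}$ of the same dimension, giving $\mathrm{Ind}(h) \leq \mathrm{Ind}(h')$. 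The main bookkeeping hurdle is verifying the vanishing of the mixed tangent--normal curvature cross terms; once the pair-swap symmetry of $R$ and the parallelism of the $TN' \oplus NN'$ splitting are combined to eliminate these, the remaining identities amount to straightforward unwinding of \eqref{stabform}.
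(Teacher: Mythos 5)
Your proof is correct, and it supplies in full the argument the paper omits (the proposition is stated there without proof, as ``elementary''): the parallel splitting $h^*TN=(h')^*TN'\oplus (h')^*NN'$ from total geodesy, the vanishing of the mixed curvature terms via the pair-swap symmetry together with the fact that $R(A,B)$ preserves the splitting for $A,B$ tangential, the pointwise nonnegativity of the normal form $Q^\perp$ under nonpositive curvature, and the injectivity of $X\mapsto X^T$ on negative definite subspaces are exactly the standard steps needed. So this is essentially the intended argument, carried out correctly.
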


\subsubsection{Minimal harmonic bundles} We record the stability form for a minimal harmonic bundle $(E,D,H)$. Recall the $\textrm{End}^0(E)$-valued $1$-form $\varphi$ on $\mathbb{P}\textrm{Met}(E),$ defined fiberwise by equation (\ref{eqn: varphi}) and which, as in section 2.3, we interpret as an isomorphism from $T\mathbb{P}\textrm{Met}(E)$ to $\textrm{End}^0(E)$. We use $\varphi$ to identify $\mathrm{Var}(H)$ with the space of smooth sections of $\mathrm{End}^0_H(E)$. One way of thinking about this identification is that if $X$ is a smooth section of $\mathrm{End}^0_H(E)$, then the one-parameter variation of metrics $$H^t = e^{-t\overline{X}^\vee}H e^{-tX}$$ 
has derivative $X$ at time zero.

If $X$ is a variation, then $\partial X$ should take values in the complexified tangent bundle, so it becomes an $\mathrm{End}^0(E)$-valued $1$-form. Recall that under this isomorphism, the (complexified) Riemannian metric becomes $\langle X, Y\rangle = 2n \tr(XY)$. We write the Hermitian norm corresponding to $H$ as $|X|^2_H = \langle X, X^{*_H} \rangle$. The operator $\partial$ on this bundle is the $(1,0)$-part of $\nabla^H$ defined in section \ref{2.3}.

\begin{prop}\label{stabformharm}
 Let $(E,D,H)$ be a minimal harmonic bundle on a closed Riemann surface $S$ with associated Higgs field $\phi$. If $X$ is a smooth section of $\mathrm{End}_H^0(E)$, then the stability form of the section $[H]$ of the associated flat Riemannian bundle $\Met(E)$ is $$Q_H(X) = 2\int_{S} |\partial X|_H^2-\frac{2|\langle \partial X,\phi\rangle|^2}{|\phi|_H^2}+|[X,\phi]|_H^2.$$
\end{prop}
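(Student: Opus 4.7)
The plan is to specialize the general stability form \eqref{stabform} to the identifications set up in Sections 2.1 and 2.3. By Proposition \ref{prop: connections agree}, the $1$-form $\varphi_H$ identifies the pullback $[H]^*T^{\mathrm{vert}}\Met(E)$ with $\mathrm{End}^0_H(E)$, and under this identification the pullback Levi-Civita connection becomes $\nabla^H = D - [\varphi_H,\cdot]$. In particular, for a variation $X \in \Gamma(\mathrm{End}^0_H(E))$, the derivatives $\partial X$ and $\dbar X$ appearing in \eqref{stabform} become the $(1,0)$ and $(0,1)$ parts of $\nabla^H X$, valued in $\mathrm{End}^0(E) \otimes \mathcal{K}$ and $\mathrm{End}^0(E) \otimes \overline{\mathcal{K}}$ respectively. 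Similarly, under $\varphi_H$ the $(1,0)$ part of $dh$ is identified with $\phi$ and the $(0,1)$ part with $\phi^{*_H}$, so $\partial h = \phi$ and $\dbar h = \phi^{*_H}$.

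Next I would verify that each term in \eqref{stabform} converts to the claimed expression. The reality of $X$ (i.e. $X^{*_H} = X$) together with the fact that complex conjugation on $\mathrm{End}(E) \otimes \C$ is $Y \mapsto Y^{*_H}$ and swaps $\mathcal{K}$ and $\overline{\mathcal{K}}$, gives $\dbar X = (\partial X)^{*_H}$. The complexified Riemannian metric is $\langle Y, Z\rangle = 2n \tr(YZ)$, so
\[
\langle \partial X, \dbar X\rangle = 2n\tr(\partial X \cdot (\partial X)^{*_H}) = |\partial X|_H^2,
\]
and similarly $\langle \partial h, \dbar h\rangle = 2n\tr(\phi \phi^{*_H}) = |\phi|_H^2$, while $\langle \partial X, \partial h\rangle = \langle \partial X, \phi\rangle$. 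Substituting gives the first two terms of $Q_H(X)$ verbatim.

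For the curvature term, I would apply the formula $R(Y,Z)W = -[[Y,Z],W]$ from Proposition \ref{prop: grad and R}, extended linearly in the $\mathcal{K}$-factor, to get $R(X, \partial h)X = -[[X,\phi], X]$. Then using cyclicity of trace ($\tr([A,B]C) = \tr(A[B,C])$) and $[X, \phi]^{*_H} = [\phi^{*_H}, X^{*_H}] = -[X,\phi^{*_H}]$,
\[
\langle R(X,\partial h)X, \dbar h\rangle = -2n\tr([[X,\phi],X]\phi^{*_H}) = -2n \tr([X,\phi][X,\phi^{*_H}]) = |[X,\phi]|_H^2,
\]
which is the third term of $Q_H(X)$. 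Integrating over $S$ completes the derivation.

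The computation is largely algebraic, so the main point to watch is keeping straight the interplay between the real structure $*_H$, the Hodge decomposition on forms, and the sign conventions: one must confirm that the conjugation used to pass from $\partial$ to $\dbar$ on $\mathrm{End}^0_H(E)$ agrees with adjoint with respect to $H$, and that the identification $\varphi_H$ really sends $\partial h$ to $\phi$ (not $\phi^{*_H}$). Once these bookkeeping points are settled, the proof is a direct substitution into \eqref{stabform}.
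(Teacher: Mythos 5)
Your proposal is correct and follows essentially the same route as the paper: substitute $\partial h = \phi$, $\dbar h = \phi^{*_H}$ into the general stability form \eqref{stabform} via the identification $\varphi_H$, and reduce the curvature term to $|[X,\phi]|_H^2$ using $R(X,Y)Z=-[[X,Y],Z]$ together with cyclicity of the trace and $([X,\phi])^{*_H}=[\phi^{*_H},X]$. Your extra verification of the first two terms and of the reality/conjugation bookkeeping is exactly the content the paper leaves implicit.
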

\begin{proof}
By definition, $\phi = \varphi(\partial H)$, and since the identification $\varphi$ is implicit, we should replace ``$\partial h$'' in Equation \eqref{stabform} with $\phi$. Because $X = X^{*_H}$, the curvature term becomes
\[
\langle R(X,\phi)X,\phi^{*_H} \rangle = 2n\tr(-[[X,\phi],X] \phi^{*_H}) = 2n\tr([X,\phi][\phi^{*_H},X]) = |[X,\phi]|^2_H.
\]

\end{proof}

\end{subsection}

\begin{subsection}{The limiting objects}\label{sec: 3.2}
There is a $\mathbb{C}^*$-action on the moduli space of Higgs bundles on a closed Riemann surface $S$, 
$$
\gamma\cdot (E,\overline{\partial}_E,\phi) = (E,\overline{\partial}_E,\gamma\phi).
$$ By Proposition \ref{scale}, the action restricts to the space of (poly)stable Higgs bundles. In this paper, we are only interested in the restricted $\mathbb{R}^+$-action. Since stable Higgs bundles have unique harmonic metrics, the non-abelian Hodge correspondence transfers the $\mathbb{R}^+$-action to an $\mathbb{R}^+$-action on the moduli space of projectively flat bundles with irreducible holonomy. Given a stable Higgs bundle $(E,\overline{\partial}_E,\phi)$ on a closed Riemann surface $S$, by taking $R\mapsto (E,\overline{\partial}_E,R\phi),$ $R>0,$ we obtain a family of harmonic bundles parametrized by $\R^+$ that we call a Hitchin ray.

In this section, we define the apartment bundle associated to a generically semisimple traceless Higgs bundle on a connected Riemann surface $S$ (not necessarily closed). It is a flat Riemannian bundle over $S$ with real affine fibers together with a canonical section that serves as a model of the limit as $R \to \infty$ of any family of harmonic bundles along a Hitchin ray. The apartment bundle is equivalent to the data of the cameral cover and an equivariant harmonic map from the universal cover of a component of the cameral cover (see below).

Let $S$ be a Riemann surface and let $(E, \dbar_E, \phi)$ be a traceless Higgs bundle on $S$.

\begin{defn}
    The critical set $B \subset S$ of $(E, \dbar_E, \phi)$ is the subset of points $p \in S$ on which the number of generalized eigenspaces of $\phi$ at $p$ is strictly less than its maximum value on $S$.
\end{defn}

By holomorphicity of $\phi$, $B$ is a discrete subset of $S$. Let $m \leq n$ be such that at each point $p \in S - B$, $\phi$ has exactly $m$ generalized eigenspaces $E_i$ at $p$. Generically regular semisimple Higgs bundles are studied in \cite{Mo}. The Higgs field $\phi$ is generically regular semisimple if and only if $m = n$. We slightly relax the condition.

\begin{defn} $\phi$ is generically semisimple if it is semisimple away from a discrete subset of $S$.
\end{defn}
Henceforth we assume that $(E,\overline{\partial}_E,\phi)$ is generically semisimple, generically with $m\leq n$ eigenspaces. A lot of the constructions below go through in the general case using the Jordan decomposition, but for this paper it suffices to consider only generically semisimple Higgs bundles.

For the following constructions, we work entirely on $S - B$. For each $p \in S - B$, there is a set $\{E_{p,i}, i \in \mathrm{Spec}_p\}$ of generalized eigenspaces for $\phi$ at $p$. The notation $\mathrm{Spec}_p$ is chosen because this set is equal to the fiber of the (reduced) spectral curve at $p$. The reduced spectral curve is a covering space over $S-B$ (see \cite[section 5.4]{DonSpec}), so the splitting at an arbitrary point $p \in S-B$ can be extended to a disk in $S-B$ around $p$. That is, in such a disk, we have a holomorphic splitting into subbundles $E=\oplus_{i\in \mathrm{Spec}_p}E_i$, where the fiber of $E_i$ at $p$ is $E_{p,i}$, with holomorphic projections $\pi_i \in \mathrm{End}(E)$ from $E$ to $E_i$, and $\phi$ has eigen-$1$-forms $\phi_i.$  Around $p$ we can further write $\phi = \sum_{i\in \mathrm{Spec}_p} \phi \circ \pi_i$, and the Jordan decomposition of $\phi$ splits over the sum $\phi = \sum_{i\in \mathrm{Spec}_p} \phi \circ \pi_i$. We observe that $\phi$ is semisimple on all of $S-B,$ since if any $\phi\circ \pi_i$ has a non-zero nilpotent part, then this would persist over a neighbourhood of $p,$ since $\phi\circ\pi_i$ not being a multiple of the identity is an open condition (see also Lemma \ref{lem: phi = phis}). Hence if $\phi$ is generically semisimple, then $\phi = \sum_{i \in \mathrm{Spec}_p} \phi_i \pi_i$ on $S-B$. 
\begin{defn}
    The extended toral bundle $F_\phi$ of $(E,\overline{\partial}_E,\phi)$ is the $m$ dimensional sub-bundle of $\mathrm{End}(E)|_{S-B}$ spanned at each point $p$ by the projections $\{\pi_i, i \in \mathrm{Spec}_p\}$. The toral bundle is the $m-1$ dimensional sub-bundle $F^0_\phi = F_\phi \cap \mathrm{End}^0(E)$.
\end{defn}
 Equivalently, the fiber of $F^0_\phi$ over each point $p$ of $S-B$ is the center of the centralizer of $\phi(p)$ in $\mathrm{End}^0(E)$. If $\phi$ is regular semisimple, this centralizer is already abelian. In this case, the toral bundle (resp. extended toral bundle) is the bundle of regular centralizers denoted $\mathbf{c}_X$ in \cite{DonGai}.

Fix a generically semisimple traceless Higgs bundle $(E,\overline{\partial}_E,\phi)$ on a Riemann surface $S$, and let $F_\phi$ be its extended toral bundle. The transition maps of $F_\phi$ only permute the projections $\pi_i$ (and moreover can only permute two projections $\pi_i$ of the same rank), and hence it is contained in a (finite) product of symmetric groups. Therefore, to define any object on $F_\phi$ (flat connection, real structure, etc.), it suffices to define it on the $\pi_i$'s and check invariance under the permutation action of the monodromy. In this way, we see that $F_\phi$ has a natural real structure and flat connection defined by the condition that the $\pi_i$'s are real and flat. Concretely, if $X$ is a section of $F_\phi$, let $dX = \partial X + \dbar X$ be its derivative with respect to the flat connection, and let $X^\dagger$ be its conjugate with respect to the real structure. We define $d$ and $\dagger$ by $d\pi_i=0$ $\pi_i^\dagger=\pi_i,$ and hence if $X = \sum_i X_i \pi_i$, 
\[
X^\dagger = \sum_i \overline{X_i}\pi_i
\]
and
\[
dX = \sum_i dX_i \pi_i.
\]
As well, the metric $\langle X, Y \rangle = 2n \tr(XY)$ on $\mathrm{End}(E)$ restricts to a flat metric on $F_\phi$. Via $\langle\cdot,\cdot\rangle$ and $\dagger$, we have a Hermitian metric on $F_\phi$, $|X|^2 = 2n\tr(XX^\dagger)$. If $r_i$ is the rank of $E_i$, then
\begin{equation}\label{pii}
    \langle \pi_i, \pi_j^\dagger\rangle = 2n\tr(\pi_i\pi_j^\dagger) = 2n r_i\delta_{ij}.
\end{equation}
The toral bundle $F_\phi^0$ of $\phi$ inherits the flat, metric, and real structures from $F_\phi$. Let $\R F_\phi^0$ be its real subbundle, i.e., the traceless $\R$-span of the $\pi_i$'s in $F_\phi$. Since the $\pi_i$'s are holomorphic, the flat connection is compatible with the holomorphic structure on $\textrm{End}(E)$, and hence $\phi$ can be viewed as a holomorphic $F_\phi^0$-valued $1$-form. Let $\psi = \phi + \phi^\dagger$. Since $\R F_\phi^0$ is flat and $\psi$ is a closed $\R F_\phi^0$-valued $1$-form, it defines an affine bundle $M_\phi$ whose vertical tangent bundle is $\R F_\phi^0$ together with a section $f$ of $M_\phi$. The construction is as follows. Let $(U_\alpha)$ be an open covering of $S$ on which every $\R F_\phi^0$ has a flat trivialization as $\R F_\phi^0|_{U_\alpha} =U_\alpha \times \R^{n-1}$, and let $w_{\alpha\beta} \in \mathrm{SL}(n-1,\R)$ be the locally constant transition functions. Let $(\psi_\alpha)$ be $\R^{n-1}$-valued one-forms representing $\psi$, with $\psi_\alpha = w_{\alpha\beta}\psi_\beta$. On each $U_\alpha$, choose $\R^{n-1}$-valued functions $f_\alpha$ with $df_\alpha = \psi_\alpha$. Since $dw_{\alpha\beta} = 0$, the $\R^{n-1}$-valued functions $t_{\alpha\beta} := f_\alpha - w_{\alpha\beta}f_\beta$ are locally constant. Rearranging this gives
\[
f_\alpha = w_{\alpha\beta}f_\beta + t_{\alpha\beta}.
\]
The affine transformations $x\mapsto w_{\alpha\beta}x + t_{\alpha\beta}$ define the bundle $M_\phi$, and $(f_\alpha)$ defines a section $f$ such that $df = \psi$, and therefore $\partial f = \phi$.

\begin{defn}
We call $M_\phi$ the apartment bundle associated to $(E, \dbar_E, \phi)$. 
\end{defn}

By equation (\ref{pii}), the natural metric on  $\R F_\phi^0$ is parallel with respect to the flat connection, and therefore the apartment bundle $M_\phi$ is a flat Riemannian bundle. The section $f$ is harmonic by construction. Assume that $\tr(\phi^2)=0$, so that the Higgs bundle $(E, \dbar_E, \phi)$ is minimal. Then the section $f$ is also minimal. 
\begin{defn}
The pair $(M_\phi, f)$ is the limiting object associated to the minimal generically semisimple Higgs bundle $(E, \dbar_E, \phi)$. Its index is by definition the index of $f$. 
\end{defn}

A variation $X \in \mathrm{Var}(f)$ is a smooth section of $\R F_\phi^0$ with compact support on $S-B$. Since the curvature of the fibers of $M_\phi$ vanishes, the stability form of $f$ is given by
\begin{equation} \label{eqn:ddota in F}
Q_f(X) = 2\int_S |\partial X|^2 - \frac{2 |\langle \partial X, \phi\rangle|^2}{|\phi|^2}.
\end{equation}

\subsubsection{Cameral covers} We now give an equivalent description of the limiting object, which is a slight modification of the equivariant map associated to the section $f$ as in section \ref{1.4}.

Let $\mathrm{Cam}^*$ be the covering space of $S-B$ whose fiber at a point $p$ is the set of maps $q$ from the set $\{1, \ldots, n\}$ to the set $\mathrm{Spec}_p$ of generalized eigenspaces of $\phi$ at $p$ such that the size of $q^{-1}(E_i)$ is the dimension of $E_i$. Let $\mathrm{Cam}$ be the Riemann surface with proper map $\mathrm{Cam} \to S$ whose restriction to $S - B$ is $\mathrm{Cam}^*$. Then $\mathrm{Cam}$ is the normalization of the reduction of the cameral cover as defined in \cite[section 2]{Don}.

By construction $\mathrm{Cam}^*$ has a fiberwise transitive action of the symmetric group $W_n$, which extends to an action on $\mathrm{Cam}$. Consequently, each component of $\mathrm{Cam}$ is isomorphic, and each component of $\mathrm{Cam}^*$ is a Galois covering space of $S-B$ with Deck group equal to its setwise stabilizer in $W_n$ modulo its pointwise stabilizer. 

\begin{defn}\label{def: smallcameral}
    A small cameral cover associated to $(E, \dbar_E, \phi)$ is a connected component of $\mathrm{Cam}$.
\end{defn}

Every small cameral cover is isomorphic and can be described as follows. A point $q: \{1, \dots, n\} \to \mathrm{Spec}_p$ in $\mathrm{Cam}^*$ determines a set partition of $\{1,\ldots,n\}$ by considering its fibers. The pointwise stabilizer of the component of $q$ is the same as the stabilizer of $q$ in $W_n$, which is the product of permutation groups permuting the elements of each fiber of $q$. Let $W_{(q)}$ be the subquotient of $W_n$ that acts on the set of fibers of $q$ in $\{1, \dots, n\}$, permuting those fibers of equal size. The Deck group of the small cameral cover is then a subgroup $\Gamma \subset W_{(q)}$.

Fix a small cameral cover $C$ with map $\tau:C\to S$ and restriction $C^*$ to $S-B$ with Deck group $\Gamma$. There is a canonical holomorphic $\C^n$-valued $1$-form $\phi'$ on $C^*$ whose $i$th coefficient at a point $q \in C^*$ is the eigenvalue $\phi_{q(i)}$ of $\phi$ on the eigenspace $E_{q(i)}$. Since $C^*$ is connected, each point $q \in C^*$ determines the same set partition of $\{1, \ldots, n\}$, and hence the same stabilizer and the same $W_{(q)}$. Let $V$ be the subspace of $\C^n$ invariant by the stabilizer in $W_n$ of any point $q$ in $C^*$, of some dimension $m\leq n$. Equivalently, $V$ consists of those vectors whose $i$ and $j$ components are equal whenever $q(i) = q(j)$. By construction $\phi'$ takes values in $V$. Since the stabilizer of $q$ acts trivially on $F'_\phi$, the group $W_{(q)}$ and in particular the Deck group $\Gamma$ of $C^*$ act on $V$, and one can form the associated bundle $F_V = C^* \times_\Gamma V$. It clearly inherits a flat connection, and if $\C^n$ is given its standard real structure and Hermitian metric, then these restrict to $V$ and descend to $F_V$. The bundle $F_V$ is isomorphic as a flat metric vector bundle with real structure to the extended toral bundle, and $\phi'$ is a holomorphic section. If $V^0$ is the intersection of $V$ with the subspace of $\C^n$ on which the coordinates sum to zero, then the toral bundle is isomorphic as a flat real metric vector bundle to $F^0_\phi = C^* \times_{\Gamma} V^0$, and $\phi'$ is still a holomorphic section. 

We obtain an alternative picture by extending $\phi'$ to $C$. Since the eigenvalues of $\phi$ are locally bounded on $S$, the $1$-form $\phi'$ extends canonically to $C$, and the extended $1$-form, which we still denote $\phi'$, is still valued in $V^0$, and is still equivariant with respect to the $\Gamma$ actions on $C$ and on $V^0$. Let $\psi' = \phi' + \overline{\phi}'$, which is a $\Gamma$-equivariant harmonic $\R V^0$-valued $1$-form on $C$, where $\R V^0$ is the real subbundle of $V^0$. It determines a $\R V^0$-valued cohomology class, of which it is the unique harmonic representative. Choosing a basepoint $z_0$ on $C$, the cohomology class defines a representation from $\pi_1(C)$ to $(\R V^0,+)$. Lifting to a harmonic $1$-form $\tilde{\psi'}$ on the universal cover $\tilde{C}$ and integrating from a lift of the basepoint gives a harmonic function $$\tilde{f}(z) = \int_{z_0}^z \tilde{\psi'}$$
that is equivariant by this representation and has image contained in $\R V^0$. 

One can make a natural choice of real othonormal basis for $V$ by choosing an ordering of the fibers of $q$, and doing so gives a representation $\chi:\pi_1(C)\to\R^m$ and a $\chi$-equivariant harmonic function $\tilde{f}$ from $\tilde{C}$ to $\mathbb{R}^m$ whose image is similarly contained in a hyperplane. The group $\Gamma$ acts as a reflection group on this hyperplane, and when we act on $\tilde{C}$ by a lift of a Deck transformation of $C$, say $\gamma\in \Gamma$, $\tilde{f}$ transforms according to the action of $\gamma$ on the hyperplane.

We now assume that $\tilde{f}$ is minimal. Define the equivariant area of $\tilde{f}$ to be the integral of its area form over a fundamental domain of the action of $\pi_1(C)$ on $\tilde{C}$ divided by $\textrm{deg}(\tau)$. Note that this is the area of the section $f$ of $M_\phi$. Since the representation $\chi$ acts by translations, a variation of the map $\tilde{f}$ that preserves $\chi$-equivariance is the same thing as a $\chi$-invariant map from $\tilde{C}$ to $\R^m$. Of course, this is the same thing as a map from $C$ to $\R^m$. The stability form for the equivariant area is 
$$
Q_{\tilde{f}}(X) = 2\int_C |\partial X|^2 - \frac{2|\langle \partial X, \partial \tilde{f}\rangle|^2}{|\partial f|^2}.
$$
Call a variation $X$ $\tau$-invariant if it is the pullback of a section of $F^0_\R$ on $S$. Define the $(\tau,\chi)$-invariant index of $\tilde{f}$ to be the index of the stability form restricted to $\tau$-invariant variations through $\chi$-equivariant maps.

The log cutoff trick (Proposition \ref{prop: log cutoff}) shows that we need only consider variations to be supported on $C - \tau^{-1}(B)$. But a $\tau$-invariant variation of $\tilde{f}$ supported on $C - \tau^{-1}(B)$ is the same as a variation of the section $f$, and the resulting area forms and stability forms are the same. We conclude

\begin{prop} The index of $f$ is the same as the $(\tau,\chi)$-invariant index of $\tilde{f}$.
\end{prop}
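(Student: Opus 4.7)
The plan is to establish a linear bijection between variations of the minimal section $f$ and $\tau$-equivariant variations of $\tilde{f}$ under which the two stability forms differ by the positive scalar $\deg(\tau)$, so that equality of indices follows immediately. First I would apply the log cutoff trick (Proposition \ref{prop: log cutoff}) to replace both indices with the corresponding indices computed over the punctured surfaces $S - B$ and $C - \tau^{-1}(B)$; the same cutoff argument applies verbatim on the subspace of $\tau$-equivariant variations, since any $\tau$-equivariant cutoff of an equivariant variation remains equivariant. A variation of $f$ on $S - B$ is a smooth compactly supported section $X$ of $F^0_\R|_{S-B}$, and by the defining property of the cameral cover, pullback yields a linear bijection from these sections onto the $\tau$-equivariant smooth compactly supported $\R^m$-valued functions on $C - \tau^{-1}(B)$, which are exactly the $\tau$-equivariant variations of $\tilde f$ supported there.

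Next I would verify that $Q_{\tilde{f}}(\tau^*X) = \deg(\tau) \cdot Q_f(X)$. The stability form \eqref{eqn:ddota in F} and the analogous formula for $Q_{\tilde f}$ displayed just before the proposition have identical local integrands in terms of $|\partial X|^2$ and $|\langle \partial X, \phi \rangle|^2/|\phi|^2$. On $C - \tau^{-1}(B)$ the flat frame $\pi_1, \dots, \pi_m$ of $F^0$ is global, and by construction $\partial \tilde f$ is the tuple $(\sqrt{r_i}\,\phi_i)$, which matches the decomposition of $\tau^*\phi$ under the norm convention fixed by \eqref{pii}; hence the local integrands agree pointwise after pullback. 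Integration then contributes the global factor $\deg(\tau)$ via $\int_C \tau^* \omega = \deg(\tau) \int_S \omega$, since $\tau$ is an unramified covering of degree $\deg(\tau)$ over the relevant open sets.

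The only real, and quite mild, obstacle is this pointwise matching: one must confirm that the normalization $\sqrt{r_i}$ in the definition of $\tilde{f}_i$ is precisely the one that makes the Euclidean metric on $\R^m$ pull back to the fiber metric on $F^0_\R$ inherited from $\mathrm{End}^0(E)$ via \eqref{pii}. Once this consistency is checked, the map $X \mapsto \tau^* X$ intertwines the two stability forms up to the positive scalar $\deg(\tau)$, so a subspace on which $Q_f$ is negative definite corresponds to a subspace of the same dimension on which $Q_{\tilde f}$ is negative definite among $\tau$-equivariant variations, and vice versa. This yields the equality of $\mathrm{Ind}(f)$ with the $(\tau,\chi)$-equivariant index of $\tilde f$.
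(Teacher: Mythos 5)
Your proposal is correct and follows essentially the same route as the paper: apply the log cutoff trick to reduce to variations supported on $C-\tau^{-1}(B)$, identify $\tau$-equivariant variations there with variations of the section $f$ via pullback, and observe that the stability forms then agree up to the factor $\deg(\tau)$ (and an overall positive normalization, which does not affect the index). The extra care you take with the $\sqrt{r_i}$ normalization and with choosing the cutoff $\tau$-invariantly is consistent with, and slightly more explicit than, the paper's argument.
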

Finally, it's helpful to observe that we can also describe our limiting objects in terms of abelian Higgs bundles on $C^* = C|_{S-B}$. Fix a natural isometry $V_\R \cong \R^m$. We can think of $\R^m$ as the group $(\textrm{GL}(1,\R)^+)^m$ and also as a Riemannian symmetric space on which $(\textrm{GL}(1,\R)^+)^m$ acts. The $m$ abelian differentials $\phi'_1, \ldots, \phi'_m$ on $C$ determine a rank $m$ Higgs bundle
\[
(W, \dbar_W, \Theta) = (C \times \C^m, \dbar, \left(\begin{array}{ccc}
    \phi'_1 & &\\
     & \ddots & \\
     & & \phi'_m
\end{array}\right) ).
\]
In fact, $(W, \dbar_W, \Theta)$ is a polystable $(\textrm{GL}(1,\R)^+)^m$-Higgs bundle. See \cite{GPGR} for the theory of Higgs bundles for reductive Lie groups such as $(\textrm{GL}(1,\R)^+)^m$. We have the following.
\begin{itemize}
    \item The maximal toral bundle $F_\phi\to S-B$ is the quotient of $W|_{C^*}$ by the Deck group $\Gamma$.
    \item The Chern connection of the standard Hermitian metric on $W$ is flat, and descends to the flat connection on $F_\phi$. The real structure on $F_\phi$ comes from the standard orthogonal structure on $\C^m$. The flat metric on $F_\phi$ comes from a choice of $(\textrm{GL}(1,\R)^+)^m$-invariant metric on the symmetric space, so the scale on each factor may be chosen independently.
    \item The pullback via $\tau$ of the generically semisimple Higgs bundle $(E|_{S-B},\overline{\partial}_E,\phi)$ on $S-B$ splits as a Higgs bundle as 
    \[
(E_1 \oplus \cdots \oplus E_m,\oplus_i \overline{\partial}_{E_i}, \left(\begin{array}{ccc}
    \phi'_1 \mathrm{Id}_{E_1}& &\\
     & \ddots & \\
     & & \phi'_m \mathrm{Id}_{E_m}
\end{array}\right) ).
    \]
    \item Every $\phi_i'$ is bounded on $C^*$ and hence extends to a holomorphic $1$-form on $C$.
    
    \item The standard Hermitian metric solves the Hitchin equations for $(W, \dbar_W, \Theta)$, and the corresponding flat connection is $\oplus_i d+\phi_i'+\overline{\phi}_i'.$ Moreover, when the trivial metric is interpreted as an equivariant map $\tilde{f}:\tilde{C}\to \R^m$ from the universal cover of $C$ to the symmetric space of $(\textrm{GL}(1,\R)^+)^m$, its derivative is $\sum_i \psi_i',$ where $\psi_i'=\phi_i'+\overline{\phi}_i'.$
    \item The symmetric space bundle $M\to C$ of metrics on this flat bundle is an affine bundle and the map $\tilde{f}$ determines a harmonic section, say $f':C\to M$. The apartment bundle $M_\phi \to S-B$ and its section $f$ are the objects one gets by taking the quotient of $(M,f')$ by the group $\Gamma.$
\end{itemize}

\end{subsection}

\begin{subsection}{Convergence under the $\R^+$ action}
We return to the setup from the very beginning of section 3.2, considering the $\R^+$-action on the moduli space of stable Higgs bundles on a closed Riemann surface $S$. Let $(E,\overline{\partial}_E,\phi)$ be a minimal slope-stable and generically semisimple Higgs bundle. Taking $R\mapsto (E,\overline{\partial}_E,R\phi),$ the non-abelian Hodge correspondence outputs a family of minimal harmonic bundles $R\mapsto (E,D_R,H_R)$ with holonomy representations $\rho_R.$ 

\begin{remark} 
The special toral bundle $F_{R\phi}^0$ of $(E, R\phi)$ is the same for every $R$, and hence the space of variations of $f_{R\phi}$ is well-defined independent of $R$.
The apartment bundle $M_{R\phi}$ and the harmonic section $f_{R\phi}$ are rescaled by $R$, and the stability form is rescaled by $R^2$. In particular the index of the stability form of $f_{R\phi}$ is independent of $R$.
\end{remark}

Recall that the statement of Theorem B is that the liminf as $R\to\infty$ of the indices of the minimal harmonic bundles $(E,D_R, H_R)$ is at least the index of the limiting object $(M_\phi, f)$. Let $Q_{H_R}$ be the stability form of $(E, D_R, H_R)$, and let $Q_f$ be the stability form of $(M_\phi, f)$. Recall that we can identify $\mathrm{Var}(H_R)$ with the space of smooth sections of $\mathrm{End}^0_{H_R}(E)$, whereas $\mathrm{Var}(f)$ is the space of smooth sections of $\R F_\phi^0$ with compact support on $S - B$. In order to relate the operators $Q_{H_R}$ to $Q_f$, we need to turn variations of $f$ into variations of $H_R$. Fortunately, there is a completely natural way to do this.  If $X$ is a variation of $f$, it is in particular a section of $\mathrm{End}^0(E)$, so we can simply project to $\mathrm{End}^0_{H_R}(E)$. Namely, let $X_R$ be the section of $\mathrm{End}_{H_R}^0(E)$ defined by
\begin{equation} \label{eqn: XR}
X_R = \frac{X + X^{*_R}}{2}
\end{equation}
where $*_R$ is an abbreviation of $*_{H_R}$. We will prove 

\begin{prop} 
\label{prop:limddota}
For any minimal slope-stable traceless Higgs bundle $(E,\overline{\partial}_E,\phi)$ with $\phi$ generically semisimple, and any $X \in \mathrm{Var}(f)$, we have
$$
\lim_{R\to\infty} Q_{H_R}(X_R) = Q_f(X).
$$
\end{prop}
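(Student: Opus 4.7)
The plan is to decompose $Q_{H_R}(X_R)$ into its three natural pieces and show each converges uniformly on the (compact) support of $X$ to the corresponding piece of $Q_f(X)$, using Mochizuki's asymptotic decoupling. Applied to the rescaled minimal harmonic bundle $(E,D_R,H_R)$, whose Higgs field is $R\phi$, Proposition \ref{stabformharm} gives, after cancelling the common factor $R^2$ between the numerator and denominator of the middle term,
$$Q_{H_R}(X_R) \;=\; 2\int_S\left(|\partial^{H_R} X_R|^2_{H_R} \;-\; \frac{2|\langle\partial^{H_R} X_R,\phi\rangle|^2}{|\phi|^2_{H_R}} \;+\; R^2\,|[X_R,\phi]|^2_{H_R}\right).$$
Comparing with \eqref{eqn:ddota in F}, I want the first two integrands to converge to their counterparts, with $\partial$ and $|\cdot|$ now referring to the flat connection and parallel Hermitian metric on the toral bundle $F_\phi$, and the third to vanish in the limit.

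The main technical input is Mochizuki's asymptotic decoupling \cite{Mo}: on any compact $K\subset S-B$, and in any local holomorphic eigenbundle splitting $E|_U = \oplus_i E_i$ induced by $\phi$, the harmonic metric $H_R$ converges, in every $C^k$ norm and at an exponential rate in $R$, to the block-diagonal metric with respect to this splitting. I draw two consequences on $\mathrm{supp}(X)$. First, the $H_R$-adjoint $*_R$, restricted to the extended toral bundle $F_\phi$, converges exponentially to the real structure $\dagger$ fixing each $\pi_i$. Since $X\in F^0_{\R}$ is $\dagger$-fixed, this yields $X_R = (X + X^{*_R})/2 \to X$ in $C^\infty$ at an exponential rate. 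Second, the Chern connection $\overline\partial_E + \partial^{H_R}$ on $\mathrm{End}(E)$, restricted to $F_\phi$, converges to the flat connection of Definition \ref{deftoral}: its $(0,1)$ part $\overline\partial_E$ already annihilates the holomorphic projections $\pi_i$, and the $(1,0)$ part satisfies $\partial^{H_R}\pi_i\to 0$, since each $\pi_i$ becomes an orthogonal projection onto a holomorphic subbundle and the Chern connection preserves any such subbundle. Consequently $\partial^{H_R}X_R\to \partial X$ and $|\cdot|_{H_R}\to |\cdot|$ on $F_\phi$, uniformly on $\mathrm{supp}(X)$.

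Substituting into the three pieces, the first two integrands converge uniformly on $\mathrm{supp}(X)$ to the integrands of $Q_f(X)$. The third is the delicate one. Since $X$ and $\phi$ both take values in the abelian bundle $F_\phi$, we have $[X,\phi]=0$, hence $[X_R,\phi] = [X_R - X,\phi]$; the exponential decay $|X_R - X|_{H_R}=O(e^{-cR})$ for some $c>0$ from Mochizuki then gives $R^2|[X_R,\phi]|^2_{H_R} = O(R^2 e^{-2cR})\to 0$ uniformly on $\mathrm{supp}(X)$. This is the one step that genuinely requires exponential (rather than merely polynomial) decoupling, so the principal obstacle is packaging Mochizuki's $C^\infty$ exponential estimates carefully enough to handle both the derivative convergence in the first two terms and this commutator bound in the third. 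With uniform convergence on the fixed compact $\mathrm{supp}(X)$ established and the integrands vanishing outside, dominated convergence finishes the argument.
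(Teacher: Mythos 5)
Your proposal is correct and follows essentially the same route as the paper: the same three-term decomposition of $Q_{H_R}(X_R)$ via Proposition \ref{stabformharm} (with the $R^2$ cancellation in the middle term), the same use of Mochizuki's exponential decoupling estimates to get $X_R\to X$, $\partial_R X_R\to \partial X$, and $|\cdot|_R\to|\cdot|$ on the toral bundle, and the same key observation that $[X,\phi]=0$ so the curvature term is $O(R^2 e^{-2\epsilon R})\to 0$. The only cosmetic difference is that you phrase Mochizuki's input as $C^k$ convergence of $H_R$ to a block-diagonal metric, whereas the paper works directly with the exponential estimates on $|\pi_i-(\pi_i')_R|_{H_R}$ and $|\partial_{H_R}\pi_i|$, but the consequences you extract are exactly those estimates.
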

Once we have proved this, the proof of Theorem B, at least in the case of $\mathrm{PGL}(n,\C)$, follows immediately by applying the following elementary lemma to $Q_R = Q_{H_R}$ and $Q = Q_f$.

\begin{lem}\label{indexlem} For $R \in \R^+$, let $V_R$ be a family of real vector spaces with quadratic forms $Q_R$. If $V$ is a real vector space with quadratic form $Q$, and for each $R$, there is a linear map $X \mapsto X_R$ from $V$ to $V_R$ such that $\lim_{R \to \infty} Q_R(X_R) = Q(X)$, then $\liminf_{R \to \infty} \mathrm{Ind}(Q_R) \geq \mathrm{Ind}(Q)$.
\end{lem}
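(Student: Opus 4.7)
The plan is to prove the lemma by extracting a negative-definite subspace of $V$ of the right dimension and showing that its image under $X \mapsto X_R$ stays negative-definite for $R$ large. The assumption on finite index is just to have a finite-dimensional test space to work with; it makes the argument into a finite matrix calculation.

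Concretely, set $k = \mathrm{Ind}(Q)$ and pick a $k$-dimensional subspace $W \subset V$ on which $Q$ is negative definite, with basis $e_1,\dots,e_k$. Let $B$ and $B_R$ denote the bilinear forms polarizing $Q$ and $Q_R$. By the polarization identity, the pointwise convergence $Q_R(X_R) \to Q(X)$ applied to $X = e_i, e_j, e_i+e_j$ yields $B_R((e_i)_R,(e_j)_R)\to B(e_i,e_j)$ for each $i,j$. Therefore the $k\times k$ Gram matrices $M_R := \bigl(B_R((e_i)_R,(e_j)_R)\bigr)_{i,j}$ converge entrywise to $M := \bigl(B(e_i,e_j)\bigr)_{i,j}$, which is negative definite by choice of $W$.

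Since negative definiteness is an open condition on symmetric matrices (e.g.\ via Sylvester's criterion or via the continuity of eigenvalues), $M_R$ is negative definite for all $R$ sufficiently large. In particular $M_R$ is nonsingular, so the vectors $(e_1)_R,\dots,(e_k)_R$ are linearly independent in $V_R$: any nontrivial relation $\sum c_i (e_i)_R = 0$ would give $c^\top M_R c = Q_R\bigl(\sum c_i (e_i)_R\bigr) = 0$, contradicting negative definiteness. Thus these vectors span a $k$-dimensional subspace $W_R \subset V_R$ on which $Q_R$ is negative definite, giving $\mathrm{Ind}(Q_R) \geq k$ for all $R$ sufficiently large, hence $\liminf_{R\to\infty} \mathrm{Ind}(Q_R)\geq k = \mathrm{Ind}(Q)$.

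There is essentially no main obstacle: the only subtlety is verifying that the image vectors $(e_i)_R$ do not become linearly dependent in the limit, and this is automatic from the negative-definiteness of $M_R$. Everything else is the standard continuity of the signature of a symmetric matrix.
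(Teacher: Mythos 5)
Your proof is correct and follows essentially the same route as the paper: choose a $k$-dimensional negative-definite subspace $W$, use finite-dimensionality to upgrade pointwise convergence of the forms $X\mapsto Q_R(X_R)$ on $W$ (which you make explicit via polarization and Gram matrices) to conclude negative definiteness for large $R$, and then observe that negative definiteness of the pushed-forward form forces the images $(e_i)_R$ to stay linearly independent, giving $\mathrm{Ind}(Q_R)\geq k$. No gaps.
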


\begin{proof} For any finite $k \leq \mathrm{Ind}(Q)$, let $W$ be a $k$-dimensional subspace of $V$ on which $Q$ is negative definite. For each $R$, $X \mapsto Q_R(X_R)$ is a quadratic form on the finite dimensional space $W$, so the pointwise convergence to $Q(X)$ implies locally uniform convergence on $W$. Hence, for large enough $R$, $Q_R(X_R)$ is also negative definite. For such $R$, the dimension of $\{X_R | X \in W\}$ must also be $k$, since otherwise we would have $Q_R(X_R) = 0$ for some vector $X$, but this is impossible as $Q_R(X_R)$ is negative definite. Hence, for such $R$, we have $\mathrm{Ind}(Q_R) \geq k$. Since $k \leq \mathrm{Ind}(Q)$ was arbitrary, $\mathrm{Ind}(Q_R) \geq \mathrm{Ind}(Q)$.
\end{proof}

\begin{remark}
    As we have indicated in Remark \ref{rem: index behaviour}, it is unclear to us in general how $Q_{H_R}$ and its index depend on $R$, apart from the limiting lower bound.
\end{remark}

Proving Proposition \ref{prop:limddota} requires an analysis of the behaviour of the minimal harmonic bundles $(E,D_R,H_R)$ as $R$ tends to infinity, and for this we turn to Mochizuki's work \cite{Mo}. In order to slightly simplify the statements of results below, as well as the proof of Proposition \ref{prop:limddota}, we fix a metric $\sigma_0$ on $S-B$ to define the norm of $1$-forms. 

\begin{thm}[Proposition 2.3 and 2.10 in \cite{Mo}]\label{thm: mochizuki}
Suppose that over a domain $U$ in $S$, the bundle $E$ splits as $E = \bigoplus_i E_i$ and $\phi$ acts on $E_i$ by multiplication by a $1$-form $\phi_i$. Let $\pi_i$ be the projection to $E_i$ determined by the splitting, and let $(\pi'_i)_R$ be the $H_R$-orthogonal projection to $E_i$.
If $U'$ is any relatively compact subdomain of $U$, then there are constants $C_1$ and $\epsilon$ depending on $U$, $U'$, the rank of $E$, and the $1$-forms $\phi_i$ such that
\begin{equation}\label{moch1}
|\pi_i - (\pi'_i)_R|_{H_R} < C_1 e^{-\epsilon R} \end{equation} and 
\begin{equation}\label{moch2}
    |\partial_{H_R} \pi_i|_{H_R, \sigma_0} < C_1e^{-\epsilon R}
\end{equation}
on $U'.$ 
\end{thm}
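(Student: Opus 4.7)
The plan is to proceed along the standard route for asymptotic decoupling in the theory of tame harmonic bundles, originating in Simpson's work and refined by Mochizuki: compare $H_R$ with a model metric that exactly respects the local holomorphic splitting, and exploit Hitchin's self-duality equation to force the comparison defect to zero exponentially fast.

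First, on the splitting domain $U$, construct a model Hermitian metric $K_R$ making the decomposition $E|_U = \bigoplus_i E_i$ orthogonal. Because on each block $E_i$ the Higgs field $\phi$ acts as the scalar $1$-form $\phi_i$, one can arrange that $K_R|_{E_i}$ solves the decoupled (block-diagonal) Hitchin equation with Higgs field $R\phi_i\,\mathrm{Id}_{E_i}$. By construction, the Hermitian orthogonal projection to $E_i$ with respect to $K_R$ coincides with the holomorphic $\pi_i$, and $\partial_{K_R}\pi_i = 0$ identically. Second, encode the difference between $H_R$ and $K_R$ by an $H_R$-self-adjoint endomorphism $s_R$ with $H_R = K_R\, e^{s_R}$, and subtract the two Hitchin equations. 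The result is a semilinear elliptic system of the schematic form
\[
\Delta_{K_R} s_R + R^2\, \mathcal{L}_{\phi,K_R}(s_R) = \mathcal{N}_R(s_R),
\]
where $\mathcal{L}_{\phi,K_R}(s) = [\phi,[\phi,s]^{*_{K_R}}] + [\phi^{*_{K_R}},[\phi^{*_{K_R}},s]]$ and $\mathcal{N}_R$ gathers the quadratic and higher-order nonlinearities. The crucial linear-algebra point is that, because the eigen-$1$-forms $\phi_i$ are pairwise distinct on $U$, the operator $\mathcal{L}_{\phi,K_R}$ is positive semidefinite on $\mathrm{End}(E)$, vanishes exactly on the block-diagonal part $\bigoplus_i \mathrm{End}(E_i)$, and on the off-diagonal blocks is bounded below by $c(x)\,\mathrm{Id}$ with $c(x)\geq c(U')>0$ on any relatively compact $U' \Subset U$.

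Third, extract the exponential bound on the off-diagonal part of $s_R$ by a Bochner/maximum-principle argument. Pairing the PDE with $s_R^{\mathrm{off}}$ yields a scalar differential inequality
\[
\Delta |s_R^{\mathrm{off}}|^2_{K_R} \geq c(U')\, R^2\, |s_R^{\mathrm{off}}|^2_{K_R} - C,
\]
valid on $U$. The a priori $L^\infty$ bounds for harmonic metrics on compact Riemann surfaces (Simpson) keep $|s_R|_{K_R}$ bounded on $U$ independently of $R$; comparing with a radial barrier of the form $C_0 e^{-\sqrt{c(U')}\, R\, d(\cdot,\partial U')}$ on a tubular neighborhood of $\partial U'$ forces $|s_R^{\mathrm{off}}|_{K_R}\leq C_1 e^{-\epsilon R}$ on $U'$. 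The diagonal part of $s_R$ is controlled by a separate scalar Bochner estimate (or absorbed into a refined choice of $K_R$). Since $(\pi'_i)_R$ depends algebraically on $s_R$ and reduces to $\pi_i$ when $s_R=0$, this yields \eqref{moch1}. For \eqref{moch2} one uses $\partial_{K_R}\pi_i = 0$ to express $\partial_{H_R}\pi_i$ in terms of $s_R$ and $\partial s_R$, and upgrades the sup-norm bound to an exponential bound on first derivatives via interior elliptic regularity applied to the equation for $s_R$.

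The main obstacle is the third step: producing the strictly positive lower bound for $\mathcal{L}_{\phi,K_R}$ on the off-diagonal blocks, and converting the Bochner inequality into a genuine barrier-type maximum principle with the correct exponential rate. This is exactly where the generic semisimplicity of $\phi$ (giving distinct eigen-$1$-forms $\phi_i$) and the relative compactness $U'\Subset U$ are used in an essential way, and it is the technical heart of Mochizuki's argument in \cite{Mo}; the construction of the comparison metric $K_R$ itself can be done rather explicitly, so the real work is in the analytic estimate.
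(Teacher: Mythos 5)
The statement you are proving is not proved in the paper at all: it is imported verbatim from Mochizuki (Propositions 2.3 and 2.10 of \cite{Mo}), and the only original content the paper adds at this point is the remark that estimate \eqref{moch2}, which Mochizuki states for one-dimensional $E_i$, extends to higher-rank blocks because $\phi$ acts on each $E_i$ by a scalar $1$-form and hence commutes with $\mathrm{End}(E_i)$ (the only property used in his Theorem 2.9). Your proposal instead sketches a from-scratch proof along the Simpson--Mochizuki decoupling strategy: a block-diagonal model metric $K_R$, the self-adjoint comparison endomorphism $s_R$, the positivity of $[\phi,[\phi,\cdot]^{*}]$-type operators on off-diagonal blocks when the $\phi_i$ are pairwise distinct, and a maximum-principle/barrier argument. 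This is indeed the right circle of ideas, and it correctly identifies where generic semisimplicity and $U'\Subset U$ enter; but as written it is not a proof, since you explicitly defer ``the technical heart'' back to \cite{Mo}, which is exactly what the paper itself does more honestly by citation.

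Beyond the deferral, there is one concrete analytic gap in your sketch: the differential inequality you write,
\begin{equation*}
\Delta \lvert s_R^{\mathrm{off}}\rvert^2_{K_R} \;\geq\; c(U')\,R^2\,\lvert s_R^{\mathrm{off}}\rvert^2_{K_R} - C,
\end{equation*}
with an $R$-independent additive constant $C$, cannot yield the exponential bound $C_1e^{-\epsilon R}$: comparison with any barrier only gives a bound of order $\max\bigl(C/(c(U')R^2),\, e^{-\epsilon R}\bigr)$, i.e.\ polynomial decay $O(R^{-2})$. To get exponential decay one needs a clean inequality of the form $\Delta f \geq \epsilon^2 R^2 f$ for a suitable scalar quantity $f$ controlling the off-diagonal part (in Mochizuki's argument the nonlinear terms have a favorable sign and there is no such zeroth-order inhomogeneity), together with an $R$-uniform bound on $f$ near $\partial U'$. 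You would also need to address how derivative bounds such as \eqref{moch2} follow with the same exponential rate; interior elliptic estimates applied to the equation for $s_R$ must be done on balls of radius comparable to $1/R$ (or with the $R^2$-dependence tracked explicitly), otherwise the constants degrade. Since the paper needs nothing more than the linear-in-$R$ exponent and the scalar-block generalization, the efficient route is the paper's: quote \cite{Mo} and justify the extension of \eqref{moch2} exactly as above.
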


Mochizuki describes the constant $\epsilon R$ in terms of the gaps between the eigenvalues of $R\phi_i$. All we need is the linear dependence on $R$. By Remark \ref{rmk: polystable}, if $E$ is polystable, the constants $C_1$ and $\epsilon$ do not depend on the choice of harmonic metric $H_R$.

The second point is stated in Mochizuki with the additional assumption that the $E_i$ are one-dimensional. But the only place he uses it is in the short proof of his Theorem 2.9, and the only property he uses is that $\phi$ commutes with endomorphisms of $E_i$. This is true in our case because we specified that $\phi$ acts on $E_i$ by a multiple of the identity. This slight generalization will be useful for applying Mochizuki's results to groups other than $\textrm{PGL}(n,\mathbb{C})$ in section \ref{GHigg}.

We recall that if $Z$ is a section of the toral bundle $F^0_\phi$, then $Z^\dagger$ and $dZ = \partial Z + \dbar Z$ are the conjugate and derivative respectively of $Z$ with respect to the real structure and flat connection on $F^0_\phi$.  

In the following corollary, so as to reduce the number of subscripts, we abbreviate $H_R$ by $R$ in the adjoint, the norm, and the connection.

\begin{cor} \label{cor: estimates} Let $(E, \dbar_E, \phi)$ be a generically semisimple stable traceless Higgs bundle with critical set $B$ and toral bundle $F_\phi^0 \subset \mathrm{End}^0(E)|_{S-B}$. Let $Z$ be a smooth section of $F_\phi^0$ on $S-B$ and let $U$ be a subdomain of $S-B$ with compact closure. Then there are constants $C_2, \epsilon > 0$ depending on $(E, \dbar_E, \phi), U, |Z|$, and $|\dbar Z|_{\sigma_0}$ such that
\begin{enumerate}[label=(\roman*)]
    \item $|Z^{*_R} - Z^\dagger|_R$
    \item $|\partial_R Z^{*_R} - \partial Z^\dagger|_{R,\sigma_0}$
    \item $|\partial_R Z - \partial Z|_{R,\sigma_0}$, and
    \item $||Z|_R - |Z||$
\end{enumerate}
are all bounded by $C_2e^{-\epsilon R}$ on $U$.
\end{cor}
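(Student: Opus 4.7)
The plan is to cover $\overline{U}$, a compact subset of $S-B$, by finitely many open sets $U_\alpha \subset S-B$ on which the Higgs bundle splits as $E|_{U_\alpha} = \bigoplus_i E_i$ with holomorphic flat projections $\pi_i$, and on which Theorem \ref{thm: mochizuki} applies with uniform constants. On each $U_\alpha$, I will write $Z = \sum_i Z_i \pi_i$ for smooth complex-valued functions $Z_i$. The hypotheses on $|Z|$ and $|\dbar Z|_{\sigma_0}$ then imply pointwise bounds on the $Z_i$ and $\dbar Z_i$ via the linear independence of the $\{\pi_i\}$ on $\overline{U_\alpha}$, reducing each of (i)--(iv) to an estimate linear in $(Z_i, \dbar Z_i)$ with coefficients controlled by Mochizuki's two bounds \eqref{moch1} and \eqref{moch2}.

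For (i) and (iii), the key observation is that $\dagger$ acts on $F$ by conjugating coefficients in the basis $\{\pi_i\}$, while $*_R$ is a genuine Hermitian adjoint, so $Z^\dagger - Z^{*_R} = \sum_i \overline{Z_i}(\pi_i - \pi_i^{*_R})$. I will bound $|\pi_i - \pi_i^{*_R}|_R$ by writing
\[
\pi_i - \pi_i^{*_R} = \bigl(\pi_i - (\pi_i')_R\bigr) - \bigl(\pi_i - (\pi_i')_R\bigr)^{*_R},
\]
using that the Hermitian orthogonal projection $(\pi_i')_R$ is $H_R$-self-adjoint, and then applying \eqref{moch1} together with the invariance of $|\cdot|_R$ under $*_R$. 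For (iii), the flat connection on $F$ is defined so that $d\pi_i = 0$; hence $\partial Z = \sum_i \partial Z_i\cdot\pi_i$, while the Chern connection expands as $\partial_R Z = \sum_i \partial Z_i\cdot\pi_i + \sum_i Z_i\cdot \partial_R \pi_i$, and the error term is controlled directly by \eqref{moch2}.

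For (ii), I will use that the Chern connection $\partial_R + \dbar_E$ is $H_R$-unitary, which, upon separating types, gives the identity $\partial_R(A^{*_R}) = (\dbar_E A)^{*_R}$ on endomorphism-valued forms. Applied to the holomorphic projections this yields $\partial_R(\pi_i^{*_R}) = 0$, so
\[
\partial_R Z^{*_R} = \sum_i \partial(\overline{Z_i})\,\pi_i^{*_R}, \qquad \partial Z^\dagger = \sum_i \partial(\overline{Z_i})\,\pi_i,
\]
and the difference $\sum_i \partial(\overline{Z_i})(\pi_i^{*_R} - \pi_i)$ is bounded using \eqref{moch1} and the control on $|\dbar Z|_{\sigma_0}$. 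Finally for (iv), I will combine the identity $|Z|_R^2 - |Z|^2 = 2n\,\tr\bigl(Z(Z^{*_R} - Z^\dagger)\bigr)$ with the Frobenius Cauchy--Schwarz inequality (so that (i) applies) and the elementary fact $(|Z|_R - |Z|)^2 \leq \bigl||Z|_R^2 - |Z|^2\bigr|$, at worst halving the decay rate, which is harmless. The boundedness of $|Z|_R$ needed for Cauchy--Schwarz follows from the explicit expansion $|Z|_R^2 = 2n\sum_{i,j}Z_i\overline{Z_j}\,\tr(\pi_i\pi_j^{*_R})$ together with $\tr(\pi_i\pi_j^{*_R}) \to r_i \delta_{ij}$ from \eqref{moch1}.

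There is no serious analytic difficulty here; the argument is bookkeeping around the interplay between the Hermitian adjoint $*_R$, the flat real conjugate $\dagger$, and the holomorphic structure. Everything hinges on two clean facts --- holomorphicity of $\pi_i$ forces $\partial_R(\pi_i^{*_R}) = 0$, and the orthogonal projection $(\pi_i')_R$ serves as an $H_R$-self-adjoint intermediary between $\pi_i$ and $\pi_i^{*_R}$ --- after which each of (i)--(iv) reduces to a one-line computation plus an application of Mochizuki.
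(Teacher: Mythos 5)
Your proposal is correct and follows essentially the same route as the paper's proof: the same local decomposition $Z=\sum_i Z_i\pi_i$, the same use of the $H_R$-self-adjoint orthogonal projection $(\pi_i')_R$ as intermediary between $\pi_i$ and $\pi_i^{*_R}$ for (i), the identity $\partial_R(\pi_i^{*_R})=0$ for (ii), Mochizuki's second estimate for (iii), the trace expansion plus boundedness of $|\pi_i|_R$ for (iv), and a finite cover of $\overline{U}$ to patch constants. If anything, you supply two small details the paper leaves implicit --- the unitarity argument for $\partial_R(\pi_i^{*_R})=(\dbar_E\pi_i)^{*_R}=0$ and the passage from $\bigl||Z|_R^2-|Z|^2\bigr|$ to $\bigl||Z|_R-|Z|\bigr|$ via $(|Z|_R-|Z|)^2\leq\bigl||Z|_R^2-|Z|^2\bigr|$ --- both of which are fine.
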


\begin{proof}
First suppose that $(E,\overline{\partial}_E, \phi)$ satisfies the splitting condition of Theorem \ref{thm: mochizuki} on a neighborhood of $U$, and write $Z = \sum_i Z_i \pi_i$.

\begin{enumerate}[label=(\roman*)]
    \item Let $(\rho_i)_R = \pi_i - (\pi_i')_R$. Since $(\pi_i')_R$ is self-adjoint with respect to $H_R$, we have $(\rho_i)_R^{*_R} = \pi_i^{*_R} - (\pi_i')_R$ and so
    \[
            |\pi_i - \pi_i^{*_R}|_R = |(\rho_i)_R - (\rho_i)_R^{*_R}|_R \leq 2 |(\rho_i)_R|_R \leq 2C_1 e^{-\epsilon R}
    \]
    by equation (\ref{moch1}) from Theorem \ref{thm: mochizuki}.
    So for some constant $C_2$,
    \[
    |Z^{*_R} - Z^\dagger|_R \leq \sum_i |Z_i||\pi_i^{*_R} - \pi_i|_R \leq C_2 e^{-\epsilon R}.
    \]
    \item Writing $Z=\sum_i Z_i\pi_i$ and using $\partial_R \pi_i^{*_R}=0,$ we see
    \[
    |\partial_R Z^{*_R}-\partial Z^\dagger|_{R,\sigma_0}\leq\sum_i |\partial \overline{Z}_i|_{\sigma_0}|\pi_i^{*_R}-\pi_i|_R\leq C_2e^{-\epsilon R}.
    \]
    \item Writing $Z = \sum_i Z_i \pi_i$ yet again,
    \[
    |\partial_R Z - \partial Z|_{R,\sigma_0} \leq \sum_i |Z_i| |\partial_R \pi_i|_{R,\sigma_0} \leq C_2 e^{-\epsilon R}
    \]
    where the last inequality is equation (\ref{moch2}) from Theorem \ref{thm: mochizuki}.
    
    \item We have $|Z|_R^2 - |Z|^2 = \sum_{i,j} Z_i\overline{Z_j}(\tr(\pi_i\pi_j^{*_R}) - \tr(\pi_i\pi_j))$, and
    \[
    |(\tr(\pi_i\pi_j^{*_R}) - \tr(\pi_i\pi_j))|_R \leq |\pi_i|_R|\pi_j^{*_R}- \pi_j|_R.
    \]
    
    Recall $r_i = \mathrm{rank}(E_i).$ Then expanding the inner product, we find
    \[
    |\pi_i - \pi_i^{*_R}|^2_R = 2 |\pi_i|^2_R - 2r_i,
    \]
    from which we conclude that $|\pi_i|_R$ is uniformly bounded in $R$. Thus $$|\pi_i|_R|\pi_j^{*_R} - \pi_j|_R \leq C_2e^{-\epsilon R}$$
    for some value of $C_2$.
\end{enumerate}

In general, we can cover $U$ by finitely many sets $U'_\alpha$ such that the conditions of Theorem \ref{thm: mochizuki} are satisfied on a neighborhood of $U'_\alpha$, and take $\epsilon$ (resp. $C_2$) to be the minimum (resp. maximum) of the value for each $U'_\alpha$. 
\end{proof}
We may now prove Proposition \ref{prop:limddota}.

\begin{proof}[Proof of Proposition \ref{prop:limddota}]

From Proposition \ref{stabformharm},
\[
Q_{H_R}(X_R) = 2\int_S |\partial_{R} X_R|_{R}^2 - \frac{2|\langle\partial_{R} X_R,\phi\rangle|^2}{|\phi|_{R}^2} + R^2|[X_R,\phi]|_{R}^2.
\]
On the other hand, from \eqref{eqn:ddota in F} we have
\[
Q_f(X) = 2\int_S |\partial X|^2 - \frac{2 |\langle \partial X, \phi\rangle|^2}{|\phi|^2}.
\]
We prove uniform convergence of each of the first two terms, and uniform convergence of the third term to zero. Starting with the first,
\[
\big||\partial_R X_R|_R - |\partial X|_R\big|\leq
|\partial_R X_R - \partial X|_R \leq \bigg|\frac{\partial_R X}{2} - \frac{\partial X}{2}\bigg|_R + \bigg|\frac{\partial_R X^{*_R}}{2} - \frac{\partial X}{2}\bigg|_R,
\]
which is $O(e^{-\epsilon R})$ by (ii) and (iii) of Corollary \ref{cor: estimates}. Since
$$||\partial_R X_R|_R - |\partial X||\leq \big||\partial_R X_R|_R - |\partial X|_R\big|+ \big||\partial X|_R - |\partial X|\big|,$$
combining with (iv) yields that $||\partial_R X_R|_R - |\partial X||$ is also $O(e^{-\epsilon R}).$

For the second term we have exponential convergence of the denominators by (iv) and for the numerators,
\[
\big||\langle\partial_R X_R, \phi\rangle| - |\langle \partial X, \phi\rangle|\big| \leq |\langle\partial_R X_R - \partial X,\phi\rangle| \leq |\partial_R X_R - \partial X|_R|\phi|_R.
\]
The term $|\phi|_R$ is uniformly bounded by (iv), and $|\partial_R X_R - \partial X|_R$ is exponentially decreasing by the previous equation.

Finally, since every section of $F_\phi^0$ commutes with $\phi$, we have $[X, \phi]=0$ and so
\[
|[X_R, \phi]|_R = |[\frac{X^{*_R} - X}{2}, \phi]|_R \leq |X^{*_R} - X|_R|\phi|_R.
\]
For the last inequality, we bounded the commutator by twice the norm of the product, and then used that Frobenius norms are sub-multiplicative.
Since $|\phi|_R$ is uniformly bounded by (iv) and $|X^{*_R} - X|_R$ is exponentially small by (i), this term goes to zero (despite the factor of $R^2$, which is dwarfed by the exponential decay).

\end{proof}

\begin{proof}[Proof of Theorem B for $\mathrm{PGL}(n,\C)$] Apply Lemma \ref{indexlem} to $V_R=\textrm{Var}(H_R),$ $V=\textrm{Var}(f)$, $Q_R=Q_{H_R}$, and $Q=Q_f$. The assumptions of Lemma \ref{indexlem} hold by Proposition \ref{prop:limddota}. 

\end{proof}

\end{subsection}

\begin{subsection}{Convergence for $G$-Higgs bundles}\label{GHigg}

This subsection is not necessary for the proof of Theorem A for $\mathrm{PSL}(n,\R)$.

Fix an admissible triple $(G,K,\nu)$. Let $(P, A^{0,1}, \phi)$ be a stable $G$-Higgs bundle that is minimal with respect to $\nu$. Suppose that $\phi$ is generically semisimple.

For each $R>0$, let $(M_R,h_R)$ be the associated minimal $G$-harmonic bundle. The left hand side of Theorem B is about the index of $h_R$ with respect to the space of smooth sections of $h_R^*T^{\textrm{vert}}M_R.$ Recall that this is identified with $P^c \times_K \p$.  In order to apply the results of the previous section, we use Proposition \ref{essfaithex} to fix an essentially faithful representation $\sigma = \prod_i \sigma_i: G \to \prod_i \mathrm{SL}_{n_i}(\C)$ of $G$ and constants $a_i$ such that the induced map of symmetric spaces is a totally geodesic isometry. By Proposition \ref{prop: tot geodesic index}, since the product of symmetric spaces $\prod_i\Met(\C^{n_i})$ is nonpositively curved, we should be able to express the index of $h_R$ using this isometry. In terms of harmonic bundles, this expectation is realized as follows.

For each $i$, let $(E_i, \dbar_{E_i}, \phi_i)$ be the Higgs bundle associated to $(P,A^{0,1},\phi)$ by $\sigma_i$. It is possible that $E_i$ is not stable, but only polystable; if so, fix once and for all a relative scaling of its stable factors as in Remark \ref{rmk: polystable}, determines for each $R \in \R^+$ a harmonic bundle $(E_i, D^R_i, H^R_i)$. The bundle $\Met(E_i)$ inherits the metric $a_i \tr$, and for each $R$ the induced map of flat Riemannian bundles from $M_R$ to $\prod_i \Met(E_i)$ is a fiberwise totally geodesic isometry, sending $h_R$ to $\prod_i [H^i_R]$. In particular the section $[H_R] := \prod_i [H^i_R]$ is minimal, and its index is the same as the index of $h_R$.

Let $X_R$ be a variation of $[H_R]$, which we can write as $X_R = \sum_i X^R_i$, where each $X^R_i$ is a section of $\mathrm{End}^0_{H^i_R}(E_i)$. The following two propositions are easy generalizations of Propositions \ref{stabformharm} and \ref{prop:limddota} respectively.

\begin{prop} The stability form of $H_R$ applied to $X_R$ is
$$Q_{H_R}(X_R) = \int_S \Big (\sum_i a_i |\partial_{H_R^i} X^R_i|_{H_R^i}^2\Big) - \frac{2\Big | \sum_i a_i\langle X^R_i,\phi_i\rangle\Big |^2}{\sum_i a_i |\phi_i|_{H_R^i}^2} + R^2\Big (\sum_i a_i|[X^R_i,\phi_i]|_{H_R^i}^2\Big).$$
\end{prop}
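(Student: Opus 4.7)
The strategy is to invoke the totally geodesic isometric embedding $\sigma$ of $M_R$ into the product $\prod_i (\Met(E_i), a_i \tr)$ to reduce the computation of $Q_{H_R}(X_R)$ to a direct application of the general formula \eqref{stabform} in this product flat Riemannian bundle, and then to carry out the same kind of calculation as in the proof of Proposition \ref{stabformharm}, now distributed over the factors. Since the embedding is totally geodesic and isometric by Proposition \ref{essfaithex}, and both the stability form and the index are preserved under such embeddings (this is essentially the content of Proposition \ref{prop: tot geodesic index}), the left side of the claimed identity is computed entirely inside the product.

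Concretely, I would evaluate each of the three terms of \eqref{stabform} separately on the product. The pairing $\langle\cdot,\cdot\rangle$ is an additive combination $\sum_i a_i \tr(\cdot\,\cdot^{*_R})$, so the ``energy'' term $2\int \langle \partial X, \overline{\partial} X\rangle$ immediately becomes $\int \sum_i a_i |\partial_{H_R^i} X^R_i|^2_{H_R^i}$, giving the first term of the formula. The curvature of a product symmetric space is block-diagonal in the factors, so by Proposition \ref{prop: grad and R} and the algebraic identity
$\langle R(X^R_i,\phi_i)X^R_i,\phi_i^{*_R}\rangle = |[X^R_i,\phi_i]|^2_{H_R^i}$
used in the proof of Proposition \ref{stabformharm}, one recovers $\sum_i a_i |[X^R_i,\phi_i]|^2_{H_R^i}$, with the overall $R^2$ coming from the scaling $\phi\mapsto R\phi$ in the Higgs field. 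The middle conformality correction $\frac{2|\langle \partial X,\partial h\rangle|^2}{\langle\partial h,\overline{\partial}h\rangle}$ behaves differently: numerator and denominator each decompose additively as $\sum_i a_i(\cdot)$, but the ratio does not split as a sum over factors, which is exactly why the middle term of the stated formula is a single fraction; the $R^2$ factors in numerator and denominator cancel.

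The anticipated main obstacle is not an analytic one but a bookkeeping issue: ensuring that the product metric coming from Proposition \ref{essfaithex} has the correct weighted form $\sum_i a_i \tr$ (rather than a product of Killing metrics with possibly different constants $2n_i$), and that all norms $|\cdot|_{H_R^i}$ appearing in the statement are interpreted consistently as the $\tr$-normalized Hermitian norms on $\mathrm{End}^0_{H_R^i}(E_i)$, so that the scalar factors in the final formula come out matching $a_i$. Once conventions are fixed, the computation is an essentially term-by-term transcription of the proof of Proposition \ref{stabformharm}, and no new conceptual step is needed.
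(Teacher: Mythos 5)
Your proposal is correct and follows essentially the same route as the paper: the identification of $h_R$ with the section $[H_R]=\prod_i[H^i_R]$ of $\prod_i(\Met(E_i),a_i\tr)$ via Proposition \ref{essfaithex} is already part of the setup preceding the proposition, after which the paper, like you, simply evaluates the general formula \eqref{stabform} term by term, using additivity of the weighted metric, the splitting of the curvature over the direct sum, and the bracket identity from Proposition \ref{stabformharm}, with the $R^2$ coming from $\phi\mapsto R\phi$ and cancelling in the conformality ratio. Your bookkeeping concern about the constants $a_i$ versus the Killing normalization is exactly what Proposition \ref{essfaithex} resolves, so no further argument is needed.
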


\begin{proof}
This is a consequence of the formula (\ref{stabform}). The curvature splits over the direct sum, so we can apply the same manipulations from Proposition \ref{stabformharm}
to get the last term.
\end{proof}
Now let $B^\sigma$ be the union of the critical sets of each $\phi_i$, and define $F_\phi^\sigma$ to be the fiber product of the toral bundles $F^0_{\phi_i}$ over $S-B^\sigma$ for each $\phi_i$. Each $F^0_{\phi_i}$ inherits the rescaled complex metric $a_i \tr$ from $\mathrm{End}^0(E_i)$, and together these give a parallel metric on $F_\phi^\sigma$. Let $M_\phi^\sigma$ be the fiber product of the apartment bundles $M_{\phi_i}$ for each $\phi_i$, and $f^\sigma$ the product of the canonical sections $f_i$. Then the minimality of $\phi$ with respect to $\nu$ implies that $f^\sigma$ is minimal with respect to the metric on $M_\phi^\sigma$. Let $Q_{f^\sigma}$ be its stability form.

A variation $X$ of $f^\sigma$ is a compactly supported section of $F^\sigma_\phi$. Given $X$, for each $R$, let $X^i_R$ be the $H^i_R$-self-adjoint part of $X^i$ as in \eqref{eqn: XR}, and set $X_R = \sum_i X^i_R$. Then $X_R$ is a variation of $[H_R]$, and we have

\begin{prop} \label{prop: G lim ddota} For any section $X \in \mathrm{Var}(f^\sigma)$,
$$
\lim_{R\to\infty} Q_{H_R}(X_R) = Q_{f^\sigma}(X).
$$
\end{prop}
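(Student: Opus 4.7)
The plan is to mirror the proof of Proposition \ref{prop:limddota} componentwise, applying Mochizuki's estimates to each classical Higgs bundle in the splitting induced by $\sigma$.

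First I would record the stability form $Q_{f^\sigma}$ explicitly. Since $M_\phi^\sigma$ is the fiber product $\prod_i M_{\phi_i}$ with metric rescaled by $a_i$ on the $i$-th factor, and each fiber remains affine, the curvature term in the stability formula \eqref{stabform} vanishes just as in the single-factor case. Adapting \eqref{eqn:ddota in F} then gives
\[
Q_{f^\sigma}(X) = 2\int_S \Big(\sum_i a_i |\partial X^i|^2\Big) \;-\; \frac{2\,\big|\sum_i a_i \langle \partial X^i, \phi_i\rangle\big|^2}{\sum_i a_i |\phi_i|^2},
\]
where $X = (X^i)_i$ and each $X^i$ is a compactly supported section of $F_{\phi_i}^0$ on $S - B^\sigma$.

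Next I would apply Corollary \ref{cor: estimates} to each $(E_i, \dbar_{E_i}, \phi_i)$ and its associated $X^i$ separately. Covering a neighborhood of $\mathrm{supp}(X)$ in $S - B^\sigma$ by finitely many open sets on which every $\phi_i$ simultaneously admits the splitting required by Theorem \ref{thm: mochizuki}, one obtains a uniform $\epsilon > 0$ for which
\[
|\partial_{H^i_R} X^R_i - \partial X^i|_{H^i_R, \sigma_0}, \qquad \big||\phi_i|_{H^i_R} - |\phi_i|\big|, \qquad |X^R_i - X^i|_{H^i_R}
\]
are all $O(e^{-\epsilon R})$ on this neighborhood, for each $i$. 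Convergence of the three pieces of the integrand of $Q_{H_R}(X_R)$ then proceeds exactly as in Proposition \ref{prop:limddota}: the gradient sum converges uniformly to $\sum_i a_i |\partial X^i|^2$ by direct expansion using (ii), (iii), (iv) of Corollary \ref{cor: estimates}; the middle term converges because both numerator and denominator converge uniformly; and for the commutator term, the toral condition ensures $[X^i, \phi_i] = 0$, so $[X^R_i, \phi_i] = \tfrac{1}{2}[(X^i)^{*_R} - X^i, \phi_i]$ has $H^i_R$-norm $O(e^{-\epsilon R})$ by (i), easily absorbing the $R^2$ factor. Integrating over the compact support of $X$ yields the claim.

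There is no substantive obstacle beyond bookkeeping, since Corollary \ref{cor: estimates} was already formulated in enough generality to apply to each classical Higgs bundle produced by $\sigma$ (this is precisely why that corollary was stated for positive-rank eigenbundles). The sum over $i$ only introduces cross-terms in the middle fraction, which behave well because each factor converges uniformly. If anything qualifies as a subtle step, it is verifying the form of $Q_{f^\sigma}$ written above, i.e., checking that the $a_i$-weighted sum is indeed the metric on the vertical tangent bundle of $M_\phi^\sigma$ inherited from $\nu$ via the totally geodesic isometric embedding of Proposition \ref{essfaithex}.
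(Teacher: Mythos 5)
Your proposal is correct and follows essentially the same route as the paper: the paper likewise writes $Q_{f^\sigma}$ as the $a_i$-weighted analogue of \eqref{eqn:ddota in F} and then invokes the termwise convergence (including separate convergence of numerator and denominator of the middle fraction, and the vanishing of $[X^i,\phi_i]$) already established via Corollary \ref{cor: estimates} in the proof of Proposition \ref{prop:limddota}, applied factor by factor. Your version merely spells out this componentwise bookkeeping explicitly, which is fine.
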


\begin{proof} Exactly parallel to formula \eqref{eqn:ddota in F}, the stability form $Q_{f^\sigma}$ is given by
$$Q_{f^\sigma}(X)= \int_S \Big (\sum_i a_i |\partial X_i|^2\Big) + \frac{\Big | \sum_i a_i\langle X_i,\phi_i\rangle\Big |^2}{\sum_i a_i |\phi_i|^2}.$$
In the proof of Proposition \ref{prop:limddota}, we showed that each term of $Q_{H_R}(X_R)$ converges to the corresponding term of $Q_f(X)$, and even showed the separate convergence of the numerator and denominator of the second term. Hence, the same argument gives the convergence of $Q_{H_R}(X_R)$ to $Q_{f^\sigma}(X)$. 
\end{proof}

Proposition \ref{prop: G lim ddota} together with the index Lemma \ref{indexlem} therefore implies that 
$$
\liminf_{R \to \infty} \mathrm{Ind}(H_R) \geq \mathrm{Ind}(f^\sigma).
$$
Since $\mathrm{Ind}(h_R) = \mathrm{Ind}(H_R)$, we have almost proved Theorem B. In fact, we have already achieved the main point, which is to give a useful lower bound for the index of $h_R$. To complete the proof, it remains only to define the $G$-apartment bundle $M^G_\phi$ and its minimal section $f$, and show that $f$ has the same index as $f^\sigma$. 

\begin{subsubsection}{$G$-apartment bundles} \label{sec: G apartment}
Our definition of $M^G_\phi$ is a natural generalization of the apartment bundle for a classical Higgs bundle, corresponding to the group $\mathrm{PGL}(n, \C)$. We first define the critical set $B$ and the $G$-toral bundle $F^G_\phi$.  

At each point we have a Jordan decomposition of $\phi=fdz$ into its commuting semisimple and nilpotent parts as $\phi=\phi_s+\phi_n,$ where $\phi_s=f_sdz$ and $\phi=f_ndz$. Recall that we are working with a generically semisimple $G$-Higgs bundle $(P,A^{0,1},\phi),$ and hence $\phi_n$ is zero on the complement of a discrete set. 
\begin{defn}
The critical set $B\subset S$ of $(P,A^{0,1},\phi)$ is the subset of points $p\in S$ on which the rank of the center of the centralizer of $\phi_s$ in $\textrm{ad}\g^{\C}$ intersected with $\textrm{ad}\p^{\C}$ is strictly less than its maximum value on $S$. 
\end{defn}
Since $\phi$ is holomorphic, $B$ is discrete. Over any contractible open subset $U\subset S$ on which $P$ has been trivialized as $U \times K^\C$, we view the Higgs field $\phi$ as a $\p^\C$-valued $1$-form. We show $\phi=\phi_s$ on $S-B.$ Toward this, we recall a known stratification of $\p^{\C}$. 

For more details on the discussion below, see \cite[sections 3.1 and 3.2]{GL}. Fix a maximal toral subalgebra $\aaa$ of $\p$, with roots $\Delta$ and Weyl group $W$. Let $\mathcal{P}$ be the poset of subsets $A\subset \Delta$ that are closed under $\mathbb{Q}$-linear combinations, ordered by inclusion. For each $A\in \mathcal{P}$ set
 $$V_A=\{X\in \aaa^{\C}: \alpha(X)=0 \textrm{ for all } \alpha\in A\},$$ and let $V_A'\subset V_A$ be the open subset that is not contained in any smaller subspace of the form $V_B$, $B\in\mathcal{P}$. Since $W$ permutes the roots, it acts on $\mathcal{P}$, and it also acts on $\{V_A:A\in\mathcal{P}\}$ by $w\cdot V_A = V_{w\cdot A},$ the latter action satisfying $(w\cdot V_A)'=w\cdot V_A'.$ Thus, there is a $W$-invariant stratification of $\aaa^\C$ indexed by the quotient $\mathcal{P}/W$ whose strata are the $W$-orbits of $V_A'$ for $A \in \mathcal{P}$. Since it is $W$-invariant, in conjunction with the Chevalley map $\p^\C \to \aaa^\C/W$ it determines a $K^\C$-invariant stratification of $\p^\C$. 
 
 Let $S_{[A]}$ be the stratum in $\p^\C$ corresponding to the equivalence class $[A]$ in $\mathcal{P}/W$ determined by $A\in\mathcal{P}$. If $G = \mathrm{PGL}(n,\C)$, then $\mathcal{P}/W$ is in bijection with partitions of $n$, and $S_{[A]}$ is the set of matrices whose generalized eigenspaces have dimensions given by the partition. The $K^{\C}$-invariant stratification of $\p^{\C}$ studied in \cite[section 3.2]{GL} is a refinement of ours that takes into account nilpotent parts as well.  
\begin{lem}
    If $X \in V_A'$, then the center of the centralizer of $X$ is $V_A$.
\end{lem}
\begin{proof}
Centers of centralizers can be understood using the stratification. Let $\g^{\C}=\aaa^{\C}\oplus_{\alpha\in \Delta} \g_{\alpha}^{\C}$ be the weight decomposition according to the adjoint action of $\aaa$, specifically 
 \begin{equation}\label{eq: weightspace}
     \g_{\alpha}^{\C} = \{H_\alpha \in \g_{\alpha}^{\C}: [X,H_\alpha]=\alpha(X) \textrm{ for all }X\in \aaa\}.
 \end{equation} 
For $A\in\mathcal{P}$ let $$\mathfrak{p}_A^{\C}=\bigoplus_{\alpha \in A}(\g_{\alpha}^{\C}\oplus \g_{-\alpha}^{\C})\cap \p^{\C}.$$ According to \cite[Lemma 3.2.1]{GL}, if $X\in V_A'$, then the centralizer of $X$ in $\p^{\C}$ is $\p^X =\aaa^{\C}\oplus \p_A^{\C}$. Using the definition (\ref{eq: weightspace}), it is immediate that the center of the centralizer of $X$ is $V_A$.
\end{proof}

\begin{lem}\label{lem: phi = phis}
    The generically semisimple Higgs field $\phi$ is semisimple on all of $S-B$.
\end{lem}

 \begin{proof}
Since the stratification above is $K^{\C}$-invariant, it induces via local trivializations a stratification of $\textrm{ad}\p^{\C}$. Explicitly, choosing a local trivialization $U\times \p^{\C}$, the stratum corresponding to $[A]$ intersected with $U\times \p^{\C}$ is $U\times S_{[A]}$. By holomorphicity of $f$, the set of $[A]_0 \in \mathcal{P}/W$ such that $f_s \in \overline{S_{[A]_0}}$ everywhere has a unique maximal element $[A]$. Let $S^s_{[A]}$ be the locus of semisimple elements of $S_{[A]}$. It follows from \cite[section 3.2]{GL} that the closure of $S_{[A]}^s$ is contained in $S_{[A]}^s \cup \bigcup_{A_0 < A} S_{[A_0]}$. Therefore, the locus of non-semisimple elements of $S_{[A]}$ is open in $\overline{S_{[A]}}$. Since $f$ is generically semisimple, it cannot meet this open set. Hence $f$ is always valued in $S_{[A]}^s \cup \bigcup_{[A]_0 < [A]} S_{[A]_0}$.

Since the stratification depends only on the image under the Chevalley map, $f_s \in S^s_{[A]_0}$ if and only if $f \in S_{[A]_0}$. By the characterization of the center of the centralizer given above, its dimension jumps whenever $f \in \bigcup_{[A]_0 < [A]} S_{[A]_0}$. Hence $S-B$ is exactly the set on which $f \in S^s_{[A]}$, and in particular $f$ is semisimple on all of $S-B$.
 \end{proof}

\begin{defn}
    The $G$-toral bundle $F_\phi^G$ is the vector bundle over $S-B$ whose fiber over a point is the intersection of $\mathrm{ad}\p^\C$ with the center of the centralizer of $\phi_s$ in $\mathrm{ad}\g^\C$. 
\end{defn}
To see that $F_\phi^G$ defines a holomorphic vector bundle, we can use the stratification. Let $U\subset S$ be a contractible open subset on which $P$ is trivialized to $U \times K^\C$, let $z\in U - B$, and let $S_A$ be the stratum determined by $\phi$. That means that there is an element $k \in K^\C$ conjugating $\phi$ at $z$ into $V_A'$. Since $V_A'$ is open in its stratum of $\aaa^\C$, we can extend $k$ to a gauge transformation, still called $k$, in a neighborhood $U_z$ of $z$ conjugating $\phi$ into $V_A'$. Thus, we can trivialize $F_\phi^G|_{U_z}$ as $U_z\times V_A$. By Proposition 2.20.18 in \cite{Eb}, which we already used in the proof of Lemma \ref{lem: same action}, the gauge transformation is independent of the choice of $k$. It follows that it is also independent of the original trivialization of $P$, since any two trivializations also differ by a gauge transformation. Since the trivialization to $U_z\times V_A$ is independent of choices, doing the same over every point of $S-B$ gives $F_\phi^G$ the structure of a holomorphic vector bundle.

The local trivializations to $V_A$ give the $G$-toral bundle $F_\phi^G$ a canonical flat connection and real structure. In any local identification of $F^G_\phi$ with $V_A\subset \aaa^{\C}$, the roots of $\aaa^\C$ are real and parallel. As $F_\phi^G$ is a subbundle of $\mathrm{ad}\p^\C$, it inherits the $K^{\C}$-invariant metric $\nu$, which is parallel with respect to the flat connection. 
By Lemma \ref{lem: phi = phis}, as in section 3.2, $\phi$ is a holomorphic $F^G_\phi$-valued $1$-form, so by integrating its real part, we obtain an affine flat Riemannian bundle $M^G_\phi$ with section $f$. This is the $G$-apartment bundle. The proof of Theorem B is completed by

\begin{prop} $\mathrm{Ind}(f) = \mathrm{Ind}(f^\sigma).$
\end{prop}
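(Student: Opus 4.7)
The plan is to construct a totally geodesic isometric embedding $\iota: M^G_\phi \hookrightarrow M^\sigma_\phi$ of flat Riemannian bundles with $\iota\circ f = f^\sigma$, and then to invoke the section version of Proposition \ref{prop: tot geodesic index}. Since $M^\sigma_\phi$ is flat, hence nonpositively curved, the proposition applies and gives $\mathrm{Ind}(f) = \mathrm{Ind}(\iota\circ f) = \mathrm{Ind}(f^\sigma)$.

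The embedding $\iota$ comes from the Lie algebra map $d\sigma: \g^\C \to \prod_i \mathfrak{sl}(n_i,\C)$. Its restriction $\mathrm{ad}\,\p^\C \to \prod_i \mathrm{End}^0(E_i)$ is isometric with respect to $\nu$ and $\sum_i a_i\,\tr$ by Proposition \ref{essfaithex}, and manifestly preserves the natural flat connections and real structures. The central claim is that it carries the $G$-toral bundle $F^G_\phi$ into $F^\sigma_\phi = \prod_i F^0_{\phi_i}$. Granting this, extending to the affine apartment bundles is routine: $d\sigma$ sends $\phi$ to $(\phi_i)_i$, so $\iota\circ f$ and $f^\sigma$ are harmonic sections of $M^\sigma_\phi$ with the same holomorphic derivative, hence differ by a parallel section that we absorb into the translational freedom in $f$. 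Since $\iota$ is affine-linear on the flat fibers, it is automatically totally geodesic.

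To verify the central claim, I would work locally in a trivialization of $P$ placing $\phi$ in a Cartan $\aaa^\C$, so that $F^G_\phi$ identifies with the stratum $V \subset \aaa^\C$ defined by vanishing of the roots annihilating $\phi$. For $X \in V$ and each $i$, $d\sigma_i(X)$ acts on each $\aaa^\C$-weight space $V_\lambda \subset \C^{n_i}$ by the scalar $\lambda(X)$. The eigenspaces of $\phi_i$ are groupings of such weight spaces by identically equal eigen-$1$-forms, so $d\sigma_i(X) \in F^0_{\phi_i}$ amounts to the implication
\[
(\lambda - \mu)(\phi) \equiv 0 \;\Longrightarrow\; (\lambda - \mu)(X) = 0.
\]

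The main obstacle is this last implication when $\lambda - \mu$ is not itself a root. If it is a root, it vanishes on $V$ by construction and there is nothing to check. In general, I expect the resolution to come from the fact that the holomorphic $1$-form $\phi$ spans the stratum $V$, so that $(\lambda - \mu)(\phi) \equiv 0$ on $S$ forces $\lambda - \mu$ to vanish identically on $V$ by Zariski density; Lemma \ref{lem: reg semisimp} should streamline the argument, and the minimality hypothesis $\nu(\phi^2) = 0$ may play a role in ruling out the degenerate cases where $\phi$ fails to span $V$. Once this Lie-theoretic step is settled, the remaining assembly of $\iota$ and the appeal to Proposition \ref{prop: tot geodesic index} is structural.
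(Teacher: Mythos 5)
Your overall architecture (map things through $d\sigma$ and quote Proposition \ref{prop: tot geodesic index}) is close in spirit to the paper, but the central claim your embedding rests on is false in general, and it is exactly the point where the paper's proof does something different. You need: if $\lambda,\mu$ are weights of $\sigma_i$ with $(\lambda-\mu)(\phi)\equiv 0$, then $\lambda-\mu$ vanishes on the whole fiber $V$ of $F^G_\phi$. When $\lambda-\mu$ is a root of $\g$ this holds by construction, but for a general weight difference it does not: $\phi$ is only constrained to avoid the root hyperplanes of $\g$ inside $V$, so its values may very well lie in the proper subspace $V\cap\ker(\lambda-\mu)$ for a weight difference of $\sigma$ that is not a root (for the adjoint representation, differences of two roots are typically not roots). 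Your Zariski-density rescue fails because nothing forces the values of $\phi$ to span $V$, and minimality $\nu(\phi^2)=0$ is a single quadratic condition with no bearing on this. In that situation there is $X$ in the fiber of $F^G_\phi$ with $(\lambda-\mu)(X)\neq 0$, so $d\sigma_i(X)$ takes two distinct eigenvalues on a single eigenbundle of $\phi_i$ and is not in the span of the eigenprojections; hence $d\sigma(F^G_\phi)\not\subset F^\sigma_\phi$ and your embedding $\iota: M^G_\phi\hookrightarrow M^\sigma_\phi$ does not exist. The correct relation, which the paper establishes by comparing the stratification of $\aaa^\C$ by roots of $\g$ with its refinement by ``roots of $\sigma$'' (weight differences), is the opposite containment $\sigma^{-1}(F^\sigma_\phi)\subset F^G_\phi$, possibly strict.

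The paper's repair of exactly this issue is worth internalizing: since $\sigma^{-1}(F^\sigma_\phi)$ is cut out by locally constant weight differences, it is a parallel (flat, real, isometric) subbundle of $F^G_\phi$ that still contains $\partial f=\phi$, so $f$ lies in a totally geodesic affine subbundle of $M^G_\phi$ modeled on it; applying Proposition \ref{prop: tot geodesic index} on the $M^G_\phi$ side (and correspondingly on the $M^\sigma_\phi$ side, where your argument does go through for the smaller bundle) reduces both indices to the index computed in this common smaller apartment bundle. You also omit the discrepancy of critical sets: variations of $f$ are supported in $S-B$ while variations of $f^\sigma$ must be supported in $S-B^\sigma$ with $B\subset B^\sigma$, and one needs the log cutoff trick (Proposition \ref{prop: log cutoff}) to see this does not change the index. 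With the containment corrected, the subbundle argument inserted, and the cutoff point added, your proof becomes essentially the paper's.
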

Recall that $\sigma=\prod_i \sigma_i : G\to \prod_i \textrm{SL}_{n_i}(\C)$ is an essentially faithful representation, giving rise to the product of toral bundles $F_\phi^\sigma = \prod_i F_{\phi_i}^0$ with product of apartment bundles $M_\phi=\prod_i M_{\phi_i}$ and minimal section $f^\sigma$.
\begin{proof}
The representation $\sigma$ defines an embedding $\textrm{ad}\sigma:\mathrm{ad}\p^\C\to\prod_i \mathrm{End}^0(E_i),$ which is compatible with the complex linear metrics on each.
Recall that $B^\sigma$ is the critical set of the Higgs field $\textrm{ad}\sigma(\phi)$. We will show that $B\subset B^\sigma,$ and over $S - B^\sigma$, there is a flat Riemannian affine bundle $M_\phi'$ with canonical harmonic section $f'$ and a diagram of fiberwise totally geodesic linear isometric embeddings
\[
M^G_\phi \leftarrow M_\phi' \rightarrow M^\sigma_\phi
\]
sending the canonical sections to one another. Propositions \ref{prop: log cutoff} and \ref{prop: tot geodesic index} will then show that the index of $f$ is equal to that of $f'$ and that the index of $f^\sigma$ is equal to that of $f',$ from which the result will follow.

Define $F_\phi'$ over $S-B^\sigma$ to be $\textrm{ad}\sigma^{-1}(F_\phi^\sigma)$. It is tautological that $F_\phi'\subset F_\phi^G.$ We will first show that $F_\phi'$ is a flat real subbundle of $F_\phi^G$, and therefore also inherits the metric $\nu$. Second, we will show that $\textrm{ad}\sigma|_{F_\phi'}$ preserves these structures.

Fix the maximal toral subalgebra $\aaa$ of $\p$ as above. Say that an element of $\aaa^\vee$ is a root of $\sigma$ if it is the difference of two weights of the same factor $\sigma_i$. The roots of $\sigma$ are always invariant by the Weyl group $W$. Futhermore they contain the roots of $\g$, since if $e_\alpha$ is a root vector for the root $\alpha$, and $v_\lambda$ is a vector in $\C^{n_i}$ of weight $\lambda$ such that $e_\alpha \cdot v_\lambda \neq 0$ then $e_\alpha \cdot v_\lambda$ is a vector of weight $\lambda + \alpha$. Hence, the roots of $\sigma$ define a $W$-invariant refinement of the previous stratification of $\aaa^\C$, and in turn a refinement of the $K^{\C}$-invariant stratification of $\p^\C$. It is not hard to see from the definition of toral bundles that $F'_\phi$ is built from this stratification in exactly the same way that $F^G_\phi$ is built from the original stratification; namely, $B^\sigma$ is the finite set at which $\phi_s$ is in a smaller stratum, and with respect to a local trivialization near $z \in S - B^\sigma$ such that $\phi_s$ is contained in $\aaa^\C$, the fiber of $F'_\phi$ at $z$ is the smallest subspace of $\aaa^\C$ in the $W$-invariant stratification containing $\phi_s(z)$. Since the stratification is a refinement of the original, $B \subset B^\sigma$. Since $F_\phi'$ is cut out by roots of $\sigma,$ which are locally constant and real, $F_\phi'$ is indeed a flat real subbundle.

Since $\sigma$ is an isometry by construction, $\textrm{ad}\sigma:F_\phi'\to F_\phi^\sigma$ preserves the metrics. Up to conjugation, we can assume the map $\sigma:\g\to \prod_i \mathfrak{sl}(n_i,\C)$ sends $\aaa$ to real diagonal matrices. In local trivializations, this is a model for $\textrm{ad}\sigma:F_\phi'\to F_\phi^\sigma$, and hence $\textrm{ad}\sigma: F_\phi'\to F_\phi^\sigma$ is real and flat, as we claimed we would show.

Totally analogous to the construction of the apartment and $G$-apartment bundles, the section $\textrm{ad}\sigma(\phi)$ of $\textrm{ad}\sigma(F_\phi')$ determines the flat Riemannian affine bundle $M_\phi'$ with canonical harmonic section $f'$. $M_\phi'$ is naturally a totally geodesic subbundle of $M_\phi^G,$ and identifies with a totally geodesic subbundle of $M_\phi^\sigma.$ Applying Propositions \ref{prop: log cutoff} and \ref{prop: tot geodesic index}, as discussed above, shows $\textrm{Ind}(f)=\textrm{Ind}(f^\sigma).$
\end{proof}

\end{subsubsection}

\end{subsection}
\end{section}

\begin{section}{Unstable minimal surfaces and the Labourie Conjecture}
After recalling the Hitchin section, we prove Theorem A for $G=\textrm{PSL}(n,\R)$, $n\geq 4$, and Theorem 1.1. We then prove Theorem A in general and deduce Corollary A. 
\begin{subsection}{The Hitchin section}\label{sec: hitchin section}

Let $S$ be a Riemann surface structure on the closed surface $\Sigma_g$ and let $(G,K)$ be an admissible pair of rank $l$. Fix homogeneous generators $p_1, \ldots, p_l$ of $\pK$, and let $m_1, \ldots, m_l$ be their degrees. In our terminology, the Hitchin map with respect to the basis $p_1, \ldots, p_l$ sends a $G$-Higgs bundle $(P, A^{0,1}, \phi)$ to
\[
(p_1(\phi), \ldots, p_l(\phi)) \in \bigoplus_{i = 1}^l H^0(S, \mathcal{K}^{m_i}).
\]
The space $\bigoplus_{i = 1}^l H^0(S, \mathcal{K}^{m_i})$ is called the Hitchin base.

If $G$ is the adjoint form of a connected split real simple group equipped with a principal embedding of $\textrm{PSL}(2,\R)$, Hitchin constructs for each point $(\alpha_1, \ldots, \alpha_l) \in \bigoplus_{i = 1}^l H^0(S, \mathcal{K}^{m_i})$ a $G$-Higgs bundle $s(\alpha_1, \ldots, \alpha_l)$. He proves

\begin{thm}[Theorem 7.5 in \cite{Hi}]\label{hit}
With respect to the natural complex structure on the moduli space $\mathcal{M}_G(S)$ of $G$-Higgs bundles over $S$, $s$ is a holomorphic section of the Hitchin map. Furthermore, $s(\alpha_1, \ldots \alpha_l)$ is always stable and $s$ is an isomorphism onto a connected component of $\mathcal{M}_G(S)$.
\end{thm}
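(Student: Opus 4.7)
The plan is to construct $s$ explicitly using the principal three-dimensional subalgebra of $\g$, then verify each assertion of the theorem in turn. Fix a principal $\mathfrak{sl}(2,\R)$-triple $(e,h,f)$ in $\g$. Under the adjoint action of this triple, $\g^\C$ decomposes as $\bigoplus_{i=1}^{l} V_{2m_i - 2}$, where $V_k$ is the irreducible $\mathfrak{sl}(2)$-representation of dimension $k+1$ and the $m_i$ are the exponents of $\g$; by Kostant these coincide with the degrees of the generating invariants $p_i$. I would choose highest weight vectors $e_i \in V_{2m_i - 2}$, with $e_1 = e$, rescaled so that $p_i(e + \sum_j z_j e_j) = z_i$, which is possible by Kostant's slice theorem through a principal nilpotent. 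The $h$-weight grading on $\g^\C$ then lets me twist each graded piece by the appropriate power of a fixed square root $\mathcal{K}^{1/2}$ of the canonical bundle to build a holomorphic $K^\C$-bundle $P$ on $S$ such that $\phi(\alpha_1,\ldots,\alpha_l) := e + \sum_i \alpha_i e_i$ is a well-defined holomorphic section of $(P \times_{K^\C} \p^\C) \otimes \mathcal{K}$. This defines $s$; holomorphicity in the $\alpha_i$ is immediate from the construction, and the Kostant normalization gives $p_i \circ s = \mathrm{id}$ on the $i$-th factor, so $s$ is a holomorphic section of the Hitchin map.

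For stability, the key point is that any $\phi$-invariant reduction of $P$ to a parabolic subgroup of $G^\C$ would yield, fiberwise, a subspace of $\g^\C$ preserved by $\mathrm{ad}_\phi$, and in particular by $\mathrm{ad}_e$. Since $e$ is a principal nilpotent, the $\mathrm{ad}_e$-invariant parabolic reductions are highly constrained, and the specific grading used to construct $P$ (with negative powers of $\mathcal{K}^{1/2}$ attached to negative $h$-weights) forces every such reduction to fail the slope inequality defining stability: the positive contribution of $\deg \mathcal{K}^{1/2} = g-1$ paired against the non-positive $h$-weights of any parabolic subalgebra prevents proper subbundles from attaining slope $\geq \mathrm{slope}(P)$.

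Finally, to show $s$ is an isomorphism onto a connected component, I would note that by Riemann--Roch the complex dimension of the Hitchin base $\bigoplus_i H^0(S, \mathcal{K}^{m_i})$ equals $\sum_i (2m_i - 1)(g - 1) = (\dim \g)(g-1)$, which matches half the expected real dimension of the smooth part of $\mathcal{M}_G(S)$. Since $s$ is a holomorphic injection into this smooth part landing in stable $G$-Higgs bundles, it is an open embedding. Closedness of its image follows from properness of the Hitchin map (a general fact due to Hitchin), since on the image of $s$ the Hitchin map acts as a continuous left inverse, so $s^{-1}$ sends compact sets to compact sets. Hence the image of $s$ is both open and closed, so it is a connected component.

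I expect the stability step to be the main obstacle. The classical $\mathrm{PGL}(n,\C)$ case reduces to a companion-matrix computation in which the Higgs bundle takes a particularly transparent upper-triangular form with the $\alpha_i$ on the super-diagonal, but in general the interaction between the principal TDS weight grading, the $K^\C$-structure, and the parabolics of $G^\C$ must be tracked carefully using the root-theoretic description of reductions. One must also verify that the argument does not depend on auxiliary choices (the square root $\mathcal{K}^{1/2}$, the conjugacy class representative of the principal TDS) in a way that would spoil well-definedness of $s$, since such a dependence could replace the would-be section by a finite cover of the Hitchin base.
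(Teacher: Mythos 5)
This statement is not proved in the paper at all: it is quoted verbatim from Hitchin \cite{Hi}, so the relevant comparison is with Hitchin's original argument, and your sketch is essentially that argument (principal $\mathfrak{sl}(2)$-triple, Kostant slice $\tilde{e}+\sum_i\alpha_i e_i$, stability from the weight grading, dimension count plus an open--closed argument for the component claim). A few corrections to the details, in decreasing order of importance. First, your stability paragraph is a plan rather than a proof, as you acknowledge; the actual verification (Hitchin does it explicitly in the $\mathrm{SL}(n)$ model and via the adjoint bundle in general) shows that any $\phi$-invariant holomorphic subbundle is forced by $\mathrm{ad}(\tilde e)$, which raises the grading by one, to sit inside a piece of the filtration whose degree is strictly negative; this is the step that must be written out. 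Second, rescaling the highest-weight vectors does not give $p_i(\tilde e+\sum_j z_je_j)=z_i$ on the nose: by weighted homogeneity one only gets $p_i=c_iz_i+(\text{terms in }z_j\text{ with }m_j<m_i)$, i.e.\ the composite of $s$ with the Hitchin map is a triangular polynomial automorphism of the base, and one must either choose adapted generators or compose with the inverse automorphism to obtain a literal section. Third, since $G$ is taken in adjoint form, all $h$-weights of the adjoint representation are even, so the bundle is built from integer powers of $\mathcal{K}$ and no choice of $\mathcal{K}^{1/2}$ enters; this disposes of the well-definedness worry you raise at the end (for other real forms the square root genuinely matters and yields several Hitchin sections). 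Fourth, for openness you need the image to consist of smooth points, which requires simplicity (vanishing of infinitesimal automorphisms) in addition to stability; both hold here but the former should be stated. Finally, closedness does not need properness of the Hitchin map: the image of $s$ is the coincidence locus of $s\circ\mathcal{H}$ and the identity, hence closed in the Hausdorff moduli space, though your properness argument also works.
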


By $G$-NAH II, the image of the Hitchin section $s$ corresponds to a connected component of the representation variety $\mathrm{Rep}(\Sigma_g,G)$. This component of the representation variety is called the Hitchin component, which we write as $\mathrm{Hit}(\Sigma_g,G)$. For $G=\textrm{PSL}(n,\R),$ this definition coincides with the one given in section \ref{sec: labconjecture}. Similarly, as we mentioned in section \ref{sec: labconjecture}, for general $G$ one can also realize Hitchin components using the principle embedding from $\textrm{SL}(2,\R)\to G$ (see \cite{Hi}).

If $G$ is a product of adjoint forms of split real simple groups, we can take the generators $p_i$ to be the union of a basis of generators for each factor. Hitchin's construction gives a $G$-Higgs bundle for each factor. We extend the Hitchin section to products by simply taking the product of $G_i$-Higgs bundles for each factor $G_i$. Theorem \ref{hit} remains true for such groups $G$.

\begin{remark}
A metric $\nu$ on $(G,K)$ determines a degree two invariant polynomial on $\p^\C$ by $X\mapsto \nu(X^2)$. Since it should be clear in context, we write $\nu$ for the polynomial. When $G$ is simple, every degree two invariant polynomial is a multiple of $\nu.$
\end{remark}

\begin{remark}\label{rem: immersion}
If $(P,A^{0,1},\phi)$ lies in the image of the Hitchin section, then $\phi$ never vanishes \cite[section 5]{Hi}. It follows that minimal harmonic maps associated to Hitchin representations are immersions.
\end{remark}
\begin{remark}
    Note that for a complex Lie group $G^{\C}$, the Hitchin map is defined using polynomials in $\mathcal{O}(\g^{\C})^{G^{\C}}$, the ring of $G^{\C}$-invariant polynomials on $\g^{\C}.$ When $G$ is a split real form of $G^{\C}$, the restriction map $\mathcal{O}(\g^{\C})^{G^{\C}}\to \pK$ is an isomorphism. Hitchin really defined the Hitchin section as a section of the moduli space for the complex group, and then proved that the image consists of Higgs bundles for the split real form.
\end{remark}

To make things more concrete, we write out Higgs bundles describing the Hitchin section for a choice of Hitchin map for $G=\textrm{PSL}(n,\R)$. Viewing $\textrm{PSL}(n,\R)$ as a subgroup of $\textrm{PGL}(n,\C)$, we'll actually write out ordinary Higgs bundles that come from $\textrm{PGL}(n,\C)$-Higgs bundles.

Recall from section \ref{2.4} that $\p^\C\subset \g^\C=\mathfrak{sl}(n,\C)$ is the subset of complex symmetric matrices. One choice of basis for $\pK$ is the elementary symmetric polynomials $e_2,\dots, e_n$. For $a=\textrm{diag}(a_1,\dots, a_n)$ in the subalgebra $\aaa^{\C}\subset \p^{\C}$ of diagonal matrices, the $e_i$ are defined
$$e_2(a) = \sum_{1\leq i<j\leq n}a_ia_j, \hspace{1mm} e_3(a) = \sum_{1\leq i<j<k\leq n} a_ia_ja_k, \hspace{1mm} \dots \hspace{1mm} e_n(a) = \prod_{i=1}^n a_i.$$ By the real Chevalley restriction theorem, this defines $e_i$ on all of $\pK.$ The Hitchin base is then $\oplus_{i=2}^n H^0(S,\mathcal{K}^i)$. Note that if $q$ is a polynomial in $\C$ with roots $\lambda_1,\dots, \lambda_n,$ then 
\begin{equation}\label{charpoly}
    q(z)= \sum_{i=0}^n (-1)^{n-i}e_{n-i}(\lambda)z^i,
\end{equation}
where $\lambda=\textrm{diag}(\lambda_1,\dots, \lambda_n)$ and $e_1(\lambda)=\sum_{i=1}^n \lambda_i$ (which vanishes on our Lie algebra).

We choose a different set of generators for our Hitchin map, which is more in line with Hitchin's original presentation in \cite{Hi}. Let $E$ be the holomorphic vector bundle $E=\oplus_{i=1}^n \mathcal{K}^{\frac{n+1-2i}{2}}$ with standard holomorphic structure $\overline{\partial}_E$, and for $\alpha=(\alpha_2,\dots, \alpha_n)\in \oplus_{i=2}^n H^0(S,\mathcal{K}^i)$, set 
$$\phi_\alpha = 
\begin{pmatrix}
    0 & \alpha_2 & \alpha_3 &\dots & \alpha_{n-1} & \alpha_n & \\
    r_1 & 0 & \alpha_2 & \dots & \alpha_{n-2} & \alpha_{n-1} \\
    0 & r_2 & 0 & \dots &  \alpha_{n-3} & \alpha_{n-2}\\
    \vdots & \vdots & \vdots & \ddots & \ddots &\vdots\\
     0 & 0 & 0 & \dots &  0 & \alpha_2\\
    0 & 0 & 0 & \dots & r_{n-1} & 0 
\end{pmatrix}\ , $$
where $r_i = \frac{i(n-i)}{2},$ $1\leq i\leq n-1.$ To interpret the matrix above as an endomorphism of $E$, note that the $(i,j)$ component is supposed to be a section of $\textrm{Hom}( \mathcal{K}^{\frac{n+1-2j}{2}}, \mathcal{K}^{\frac{n+1-2i}{2}})\otimes \mathcal{K}$, which we're identifying with $\mathcal{K}^{j-i+1}$. Hitchin's method in \cite[\S3 and \S5]{Hi} shows that every $(E,\overline{\partial}_E,\phi_q)$ is stable.

There exists a set of homogeneous polynomials $p_1,\dots, p_{n-1}\in \pK$ such that $p_i(\phi_q)=\alpha_{i+1}$, but it's cumbersome to write them down explicitly (see the remark on page 4 of \cite{Hit2016}). Of course, they can be expressed in terms of the elementary symmetric polynomials. These polynomials $p_1,\dots, p_{n-1}$ are the standard choice for defining a Hitchin map, and with this choice, the Hitchin section $s$ associates $\alpha=(\alpha_2,\dots, \alpha_n)$ to the class of $(E,\overline{\partial}_E,\phi_\alpha)$ in the moduli space of Higgs bundles.  Note that the bundle $E$ has degree $0$ and the Hermitian metric obtained through NAH II makes $(E,\overline{\partial}_E+\partial^H + \phi + \phi^{*_H})$ a flat bundle, and not just a projectively flat bundle. Accordingly, the holonomy lifts to $\textrm{GL}(n,\C).$ Hitchin proves in \cite{Hi} that the holonomy in fact lives in $\textrm{SL}(n,\R).$

Finally, with Theorem \ref{thm: PSL(4,R)} in mind, we write down the generating polynomials explicitly for $n=4$. In this case, $r_1=r_3=\frac{3}{2}$ and $r_2=2$. Given complex numbers $a_2,a_3,a_4\in \C$, we compute a characteristic polynomial 
$$\det\Big(zI - 
\begin{pmatrix}
0 & a_2 & a_3 & a_4 \\
\frac{3}{2} & 0 & a_2 & a_3 \\
0 & 2 & 0 & a_2 \\
0 &0 & \frac{3}{2} & 0
\end{pmatrix}\Big )= z^4 - 5a_2 z^2 - 6a_3 z-\frac{9}{2}a_4 +\frac{9}{2}a_2^2.$$
Using the formula (\ref{charpoly}) we see that we should take $p_1=-\frac{e_2}{5},$ $p_2=\frac{e_3}{6}$, and $p_3=-\frac{2}{9}(e_4 - \frac{e_2}{25}).$
\end{subsection}

\begin{subsection}{Unstable minimal surfaces for Hitchin representations}\label{sec: unstable minimal surfaces}
In the beginning of this subsection we'll recall a few notations and results from the introduction. 
Let $(G,K,\nu)$ be an admissible triple with $G$ the adjoint form of a split real semisimple group, and let $N = G/K$. Let $\mathbf{T}_g$ be the Teichm{\"u}ller space of a surface $\Sigma_g$ of genus $g$. Recall that if $M$ is a flat Riemannian $N$-bundle with irreducible holonomy $\rho: \pi_1(\Sigma_g) \to G$, $G$-NAH I produces a harmonic section $h$. Let $\mathbf{E}_\rho(S)$ be the total energy of $h$ with respect to $\nu$, which we view as a function on $\mathbf{T}_g$. The function $\mathbf{E}_\rho$ is smooth \cite{Sle}. If $S$ is a critical point in $\mathbf{T}_g$, then the section $h$ is minimal with respect to $\nu$, and its index as a minimal map is equal to the index of $\mathbf{E}_\rho$ at $S$ \cite[Theorem 3.4]{Ej}. In the case that $\rho$ is in the Hitchin component, the result below follows from work of Labourie \cite[Theorem 1.0.3]{L1} (properness of energy for well-displacing representations), Guichard-Wienhard \cite[Theorem 1.7]{GuW} (Anosov representations are well-displacing), and Guichard-Wienhard-Labourie \cite{GLW} (Hitchin representations for all $G$ are Anosov).

\begin{thm}If $\rho$ is Hitchin, then $\mathbf{E}_\rho$ is proper on $\mathbf{T}_g$.
\end{thm}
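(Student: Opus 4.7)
My plan is to argue by contradiction: suppose there exists a sequence $\{S_n\} \subset \mathbf{T}_g$ leaving every compact set along which $\mathbf{E}_\rho(S_n)$ stays bounded. The central tool is the classical energy lower bound for $\rho$-equivariant harmonic maps into a nonpositively curved target: for any essential simple closed curve $\gamma \subset \Sigma_g$,
\[
\mathbf{E}_\rho(S) \;\geq\; \frac{L_\rho(\gamma)^2}{2\,\ell_S(\gamma)},
\]
where $L_\rho(\gamma)$ denotes the translation length of $\rho(\gamma)$ on $N$ and $\ell_S(\gamma)$ the hyperbolic length of $\gamma$ on $S$. This estimate is proved by restricting the harmonic map to a standard collar neighborhood of $\gamma$ and postcomposing with the $1$-Lipschitz projection onto the translation axis of $\rho(\gamma)$; the nonpositive curvature of $N$ is essential for the projection to be distance-decreasing.

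I would then combine this with the Anosov property of Hitchin representations, established in \cite{L0} and \cite{FG}, which furnishes a uniform constant $c > 0$ with $L_\rho(\gamma) \geq c$ for every non-trivial $\gamma \in \pi_1(\Sigma_g)$. The main case is when the systole of $S_n$ tends to zero: by the thick-thin decomposition there exist essential simple closed curves $\gamma_n$ on $S_n$ with $\ell_{S_n}(\gamma_n) \to 0$, so the estimate yields $\mathbf{E}_\rho(S_n) \geq c^2/(2\,\ell_{S_n}(\gamma_n)) \to \infty$, contradicting the assumption.

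The remaining case is that $\{S_n\}$ stays in some thick part $\mathbf{T}_{\geq \varepsilon}$ but leaves compact subsets of $\mathbf{T}_g$ via the mapping class group. Writing $S_n = \varphi_n \cdot T_n$ with $T_n$ in a compact set and $\varphi_n$ going to infinity in the mapping class group, one has $\mathbf{E}_\rho(S_n) = \mathbf{E}_{\varphi_n^*\rho}(T_n)$; boundedness of this quantity over a compact range of conformal structures, together with the energy lower bound applied to a filling family of curves, forces the translation length spectra of $\varphi_n^* \rho$ to remain bounded on that family. Since the marked length spectrum of a Hitchin representation determines it up to conjugation, this contradicts that the mapping class group has no periodic orbits of escaping sequences on the Hitchin component.

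The main obstacle is this second case: the clean resolution requires more than discreteness and faithfulness, and really exploits that Hitchin representations are Anosov in a way that allows their full translation length spectrum to detect the mapping class group action. An alternative route, in the spirit of Labourie's original argument, is to extract from the bounded-energy hypothesis a limiting Hopf differential and apply a compactness theorem for equivariant harmonic maps directly, sidestepping the division into cases.
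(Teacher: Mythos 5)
Your first case is fine: the collar/cylinder estimate $\mathbf{E}_\rho(S)\gtrsim L_\rho(\gamma)^2/\ell_S(\gamma)$ is standard (no projection to an axis is even needed; each core circle of the collar maps to a loop of length at least $L_\rho(\gamma)$, and Cauchy--Schwarz does the rest), and a uniform bound $L_\rho(\gamma)\geq c>0$ over all nontrivial conjugacy classes does follow for Hitchin representations. The genuine gap is exactly where you flag it, in the second case, and as written it is not closed. From $\mathbf{E}_\rho(S_n)=\mathbf{E}_{\varphi_n^*\rho}(T_n)$ bounded with $T_n$ in a compact set you correctly get that $L_\rho(\varphi_n(\gamma))$ stays bounded for each $\gamma$ in a finite filling family $F$; but the step ``the marked length spectrum of a Hitchin representation determines it up to conjugation, so this contradicts that the mapping class group has no periodic orbits of escaping sequences'' is not an argument. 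Length-spectrum rigidity of $\rho$ says nothing about whether finitely many values $L_\rho(\varphi_n(\gamma))$, $\gamma\in F$, can stay bounded as $\varphi_n\to\infty$, and ``no periodic orbits of escaping sequences'' is not a statement one can invoke. What actually closes this case is the well-displacing property of Hitchin (Anosov) representations: there are constants $A,B>0$ with $L_\rho(\gamma)\geq A\,\ell_{g_0}(\gamma)-B$ for all $\gamma$ (\cite{L1}, \cite{GuW}). Combined with the fact that for a filling family $F$ the function $\varphi\mapsto\max_{\gamma\in F}\ell_{g_0}(\varphi(\gamma))$ is proper on the mapping class group (proper discontinuity of the action on $\mathbf{T}_g$), boundedness of $L_\rho(\varphi_n(\gamma))$ on $F$ forces $\varphi_n$ to range over a finite set, contradicting $\varphi_n\to\infty$. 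So the missing ingredient is the quantitative Anosov input, not length-spectrum rigidity.

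For comparison, the result is Labourie's and the paper simply cites it; his proof, recalled later in the paper, uses the same well-displacing inequality but packages it uniformly: $\mathbf{E}_\rho(S)\geq A_\rho\,(\mathrm{inter}(S,S_0))^2$ with Bonahon's intersection function, which is proper on $\mathbf{T}_g$ (\cite{Bon}), so no thick--thin or mapping-class-group case division is needed. Your outline can be repaired by inserting the well-displacing estimate into your second case (or by adopting the intersection-number bound outright), but as submitted the contradiction in that case is asserted rather than proved.
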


Consequently, $\mathbf{E}_\rho$ admits a global minimum, which must in particular be a stable critical point. Therefore, every Hitchin representation has at least one stable equivariant minimal surface. Hence, to prove that there exists a Hitchin representation with more than one equivariant minimal surface, it suffices to produce a Hitchin representation with an unstable minimal surface. This is what we do below. But first, we state a well-known fact that is the key to the construction. Note that a $\R^n$-valued first cohomology class of $\Sigma_g$ defines an action of $\pi_1(\Sigma_g)$ on $\R^n$ by translations.

\begin{prop}\label{exun} Let $\Sigma_g$ be a closed surface of genus $g$ with $g \geq 3$. For any $n \geq 3$, there is a cohomology class $\beta \in H^1(\Sigma_g, \R^n)$, a conformal structure $S$ on $\Sigma_g$, and an unstable minimal map $\tilde{f}$ from a covering space $\tilde{S}$ of $S$ to $\R^n$, equivariant by the action of the Deck group determined by $\beta$.
\end{prop}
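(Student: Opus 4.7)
The plan is to first handle the case $n=3$ directly via the Weierstrass representation, and then reduce the general case $n\geq 3$ to it by a totally geodesic embedding. The key input from outside is the result cited from \cite[section 5.3]{MSS} that any non-planar equivariant minimal surface in $\R^3$ is unstable, so all we need to do is produce a non-planar equivariant minimal map from $\tilde{\Sigma}_g$ to $\R^3$.

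For the case $n=3$, we fix any conformal structure $S$ on $\Sigma_g$ and look for a triple of holomorphic $1$-forms $\omega_1, \omega_2, \omega_3 \in H^0(S, \mathcal{K})$ which are $\C$-linearly independent and satisfy the conformality constraint $\omega_1^2 + \omega_2^2 + \omega_3^2 = 0$. Given such a triple, set
\[
\tilde{f}(z) = \Big(\mathrm{Re}\int_{z_0}^{z}\omega_1, \ \mathrm{Re}\int_{z_0}^{z}\omega_2, \ \mathrm{Re}\int_{z_0}^{z}\omega_3\Big),
\]
a map from the universal cover $\tilde{S}$ to $\R^3$ which is harmonic (integrals of closed $1$-forms are harmonic), conformal (by the quadratic constraint), and equivariant under the translation action determined by the cohomology class $\beta = ([\mathrm{Re}\,\omega_1], [\mathrm{Re}\,\omega_2], [\mathrm{Re}\,\omega_3]) \in H^1(S,\R^3)$. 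Linear independence of the $\omega_i$ forces the image of $\tilde{f}$ to not lie in any affine plane, so $\tilde{f}$ is non-planar and therefore unstable by \cite[section 5.3]{MSS}.

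Existence of such a triple is where the hypothesis $g\geq 3$ enters. The map $\Phi : H^0(S,\mathcal{K})^3 \to H^0(S,\mathcal{K}^2)$ defined by $\Phi(\omega_1,\omega_2,\omega_3) = \sum_i \omega_i^2$ has domain of dimension $3g$ and target of dimension $3g-3$, so a dimension count shows that $\Phi^{-1}(0)$ has dimension at least $3$. When $g\geq 3$, the subspace of triples with one component equal to zero or two components linearly dependent is a proper subvariety of $\Phi^{-1}(0)$, so a generic element gives three linearly independent $\omega_i$. Alternatively, one can write down an explicit example on a hyperelliptic surface using the basis $\frac{x^i dx}{y}$, $i=0,\dots,g-1$, or simply appeal to the existence (due to Meeks and others) of minimal embeddings of $\Sigma_g$ into flat $3$-tori for every $g\geq 3$ and lift. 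This dimension/existence step is the only nontrivial input; it is classical.

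For $n>3$, take $\tilde{f}_3 : \tilde{\Sigma}_g \to \R^3$ as constructed above and postcompose with the standard inclusion $\iota : \R^3 \hookrightarrow \R^n$ onto the first three coordinates. The composition $\tilde{f} = \iota\circ \tilde{f}_3$ is an equivariant minimal map with cohomology class $\beta = (\beta_3, 0, \dots, 0) \in H^1(\Sigma_g, \R^n)$. Since $\R^3 \subset \R^n$ is a totally geodesic subspace of a nonpositively curved target, Proposition \ref{prop: tot geodesic index} gives $\mathrm{Ind}(\tilde{f}) = \mathrm{Ind}(\tilde{f}_3) > 0$, so $\tilde{f}$ is unstable as required.
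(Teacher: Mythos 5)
Your overall route is viable and genuinely different from the paper's: the paper takes the quotient of the triply periodic Schwarz P-surface (genus $3$), quotes Ross's computation that its equivariant index is one (or simply destabilizes with a constant-length normal variation), passes to branched covers for $g>3$, and includes $\R^3\subset\R^n$; you instead build Weierstrass data $(\omega_1,\omega_2,\omega_3)$ with $\sum_i\omega_i^2=0$ and invoke the fact from \cite{MSS} that non-planar equivariant minimal surfaces in $\R^3$ are unstable. The reduction to $n=3$ via Proposition \ref{prop: tot geodesic index} and the translation between the triple of $1$-forms, the class $\beta$, and the equivariant map are all fine.

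The gap is in your primary existence argument. You ``fix any conformal structure $S$'' and argue by dimension count that $\Phi^{-1}(0)\subset H^0(S,\mathcal{K})^3$ has dimension at least $3$, then claim the degenerate triples form a proper subvariety. That genericity claim is false in general, and the dimension count cannot rule this out: the locus of rank-one triples $\omega_i=a_i\omega$ with $a\cdot a=0$ has dimension $g+1\geq 4$, so it may well be (and sometimes is) a whole component of $\Phi^{-1}(0)$. Concretely, on a non-hyperelliptic genus $3$ surface there are \emph{no} non-planar solutions at all: a linearly independent triple would exhibit the canonical curve, a smooth plane quartic, as lying on the conic $\{x_1^2+x_2^2+x_3^2=0\}$, which contradicts B\'ezout; a triple spanning a $2$-dimensional subspace would force a $2$-dimensional isotropic subspace of $(\C^3,\sum x_i^2)$, which doesn't exist (since $\alpha^2,\alpha\beta,\beta^2$ are independent in $H^0(\mathcal{K}^2)$ for independent $\alpha,\beta$); and rank-one triples are always planar, since a real vector annihilating both $\mathrm{Re}(a)$ and $\mathrm{Im}(a)$ gives a linear function constant on the image. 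So the choice of $S$ really matters, and your fallback is the actual proof: take $S$ hyperelliptic, $y^2=p(x)$, and set $\omega_1=\tfrac12(1-x^2)\tfrac{dx}{y}$, $\omega_2=\tfrac{i}{2}(1+x^2)\tfrac{dx}{y}$, $\omega_3=x\tfrac{dx}{y}$; these satisfy $\sum_i\omega_i^2=0$, are independent, and are holomorphic exactly when $g\geq 3$ (you need $x^2\,dx/y\in H^0(\mathcal{K})$), which is where the genus hypothesis enters. Promote that computation (or the appeal to minimal surfaces in flat $3$-tori) to the main argument and drop the genericity claim; note also that for $g>3$ this map has branch points at infinity, so you should check that the instability statement you quote from \cite{MSS} applies to branched equivariant minimal maps (alternatively, a constant-length normal variation destabilizes directly, as in the paper's use of \cite{Ross}).
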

\begin{proof} Here is a very direct proof. In the case $g = 3$ and $n=3$, we take $\tilde{S}$ to be the triply periodic Schwarz P-surface, $\tilde{f}$ the inclusion, $S$ the quotient by $\Z^3$, and $\beta$ the coresponding cohomology class. The Weierstrass data of $\tilde{f}$ can be described as follows. The Riemann surface $S$ is defined by the equation $$w^2=z^8+14z^4+1,$$ where $z$ is a degree $2$ function on $S$ and $w$ is a degree $8$ function. We consider $S$ as a branched double cover of the complex plane with the coordinate $z$. The Weierstrass data is $$\phi_1 = \frac{(1-z^2) dz}{w}, \hspace{1mm} \phi_2 = \frac{i(1+z^2)dz}{2}, \hspace{1mm} \phi_3 = \frac{2zdz}{w},$$ and $\tilde{f}$ is then obtained by lifting these $1$-forms to the universal cover and integrating their real parts from a fixed basepoint. This surface has equivariant index one \cite{Ross}; to show that the index is at least one, it suffices to consider a normal variation of constant length. If $g > 3$, we can take a branched covering of the (quotient of the) Schwarz P-surface, and if $n > 3$, we can linearly isometrically map $\R^3$ into $\R^n$.
\end{proof}

In fact, there are many such examples, and, at least for large genus, many of very large index. For a discussion in the case $n=3$, see \cite[section 5.3]{MSS}.

By the usual correspondence between equivariant maps and flat bundles, we can also interpret this as a flat Riemannian bundle $M_{\beta}$, whose fibers are $\R^n$ and whose transition functions are translations given by the cohomology class $\beta$, together with a minimal section $f$. The vertical tangent bundle of $M_{\beta}$ is just the trivial $\R^n$-bundle. The equivariant index of $\tilde{f}$ is the same as the index of $f$. This proposition indicates that we should be looking for instability in Higgs bundles for which the $G$-toral bundle $F^G_\phi$ is trivial.

We first prove Theorem A for the main case of interest, $G=\textrm{PSL}(n,\R)$, $n\geq 4$. The proof is simple and distills the main ideas. 
\begin{proof}[Proof of Theorem A for $\textrm{PSL}(n,\R)$]
As above, let $\aaa^\C\subset \p^\C\subset \g^\C$ be the subalgebra of traceless diagonal matrices, 
$$
\aaa^\C = \{\textrm{diag}(a_1,\dots, a_n): a_i\in \C, \sum_{i=1}^n a_i = 0\}.
$$
Let $(\tilde{S},\beta, \tilde{f})$ be an unstable minimal map to $\R^{n-1}$ as in Proposition \ref{exun}. There is a $\textrm{SO}(n-1,\R)$-torsor of isometries from $\R^{n-1}$ to the subalgebra of real points $\aaa$, and we have the freedom to choose one, $\iota$, such that no two components of $\iota(\partial\tilde{f})$ agree at every point of $\tilde{S}.$ Let $\textrm{diag}(\phi_1,\dots,\phi_n) = \iota(\partial \tilde{f}).$ This is the holomorphic derivative of the section $f$ of $M_{\beta}$ in a particular trivialization, so we denote it by $\partial f$. From the choice of hyperplane, $e_1(\partial f)=0$, and since $f$ is minimal and the mapping from $\R^{n-1}$ to $\aaa$ is an isometry, $e_2(\partial f)=0$. By our choice of isometry $\iota$, $\partial f$ is generically regular semisimple. 

Let $p_1,\dots, p_{n-1}$ be any choice of homogeneous generators for $\pK$, with associated Hitchin section $s$. For each $i=1,\dots, n-1$, set $\alpha_i=p_i(\partial f)$, defining a point $(\alpha_2,\dots, \alpha_n)$ in the Hitchin base, and let $(E,\overline{\partial}_E,\phi)$ be the Higgs bundle associated to $s((\alpha_2,\dots, \alpha_n))$ via the standard representation. Since $\alpha_2 = 0$ and $p_2$ is a linear combination of $e_2$ and $e_1^2$, $p_2(\phi)=0,$ so the Higgs bundle $(E,\overline{\partial}_E,\phi)$ is minimal. Since the $p_i$'s generate $\pK$, $e_i(\phi)=e_i(\partial f)$ for all $i=2,\dots, n$, and hence by (\ref{charpoly}), the characteristic polynomials of $\phi$ and $\partial f$ agree. Hence, $\phi$ is not just generically regular semisimple, but it is globally diagonalizable on the complement of its critical set $B$, with eigen-$1$-forms $\phi_1,\dots, \phi_n$.

Since $\phi$ is globally diagonalizable on $S-B$, the extended toral bundle $F_\phi$---spanned by globally defined projections to the eigenspaces of $\phi$---is trivial. The toral bundle $F_\phi^0$ is then trivial as well. Equivalently, any small cameral cover is a copy of $S$ itself. After trivializing the vertical tangent bundle of the apartment bundle $M_\phi$ to $\R^n$, the holomorphic derivative of the minimal section agrees with $\partial f$ up to an isometry. By Theorem B, for $R$ large enough, the index of the minimal map $h_R$ associated to $(E,\overline{\partial}_E,\phi)$ is bounded below by the index of the limiting object, which is equal to the equivariant index of $\tilde{f}$, hence positive. This completes the proof. 
\end{proof}
Theorem \ref{thm: PSL(4,R)} is obtained just by specializing the proof above to $\textrm{PSL}(4,\R)$ and choosing a basis of polynomials that defines our Hitchin map.
\begin{proof}[Proof of Theorem \ref{thm: PSL(4,R)}]
Let $(\tilde{S},\beta,\tilde{f})$ be the data of an unstable minimal map to $\R^3$ as in Proposition \ref{exun}, such as that of the Schwarz P-surface. If $\tilde{f}=(\tilde{f}_1,\tilde{f}_2,\tilde{f}_3),$ the holomorphic derivative of $\tilde{f}_i$ descends to an abelian differential $\beta_i$ on $S,$ and the $\beta_i$'s satisfy $\beta_1^2+\beta_2^2+\beta_3^2=0.$ Choosing a linear isometry from $\R^3$ to the subalgebra $\aaa^{\C}$ from the proof above turns $(\beta_1,\beta_2,\beta_3)$ into a traceless matrix of abelian differentials $\textrm{diag}(\phi_1,\phi_2,\phi_3,\phi_4)$. We choose our isomorphism so that no two $\phi_i$'s are identically equal; this ensures that our matrix is generically regular semisimple. We define our Hitchin map and corresponding section using the polynomials from section \ref{sec: hitchin section}, $p_1=-\frac{e_2}{5},$ $p_2=\frac{e_3}{6}$, and $p_3=-\frac{2}{9}(e_4 - \frac{e_2}{25}).$ According to the proof of Theorem A for $\textrm{PSL}(n,\R),$ for $R>0$ sufficiently large, applying the non-abelian Hodge correspondence to any Higgs bundle in the Hitchin fiber over $$(0,R^3\alpha_3,R^4\alpha_4)=(0,\frac{1}{6}R^3(\phi_1\phi_2\phi_3+\phi_1\phi_2\phi_4 + \phi_1\phi_3\phi_4+\phi_2\phi_3\phi_4), -\frac{2}{9}R^4\phi_1\phi_2\phi_3\phi_4)$$ returns a Hitchin representation $\rho_R:\pi_1(S)\to \textrm{PSL}(4,\R)$ together with an unstable $\rho_R$-equivariant minimal map. Explicitly, consider the Higgs bundle in the Hitchin section with underlying holomorphic vector bundle $E=\mathcal{K}^{\frac{3}{2}}\oplus \mathcal{K}^{\frac{1}{2}}\oplus  \mathcal{K}^{-\frac{1}{2}}\oplus  \mathcal{K}^{-\frac{3}{2}}$, with standard holomorphic structure $\overline{\partial}_E$, and Higgs field $$\phi=
\begin{pmatrix}
    0 & 0 & \frac{1}{6} R^3\alpha_3 & -\frac{2}{9} R^4\alpha_4 \\
    \frac{3}{2} & 0 & 0 & \frac{1}{6} R^3\alpha_3 \\
 0 & 2 & 0 & 0 \\
    0 & 0 & \frac{3}{2} & 0 
\end{pmatrix}.$$ 
Then Labourie's conjecture fails for the representation to the $\textrm{PSL}(4,\R)$ obtained by applying NAH II to $(E,\overline{\partial}_E,\phi)$.
\end{proof}

The proof of the full Theorem A follows the $\textrm{PSL}(n,\R)$ case in an abstract setting. 

\begin{proof}[Proof of Theorem A]
Let $G$ be the adjoint form of a split real group of real rank $l \geq 3$, with a maximal compact $K$ and decomposition $\g = \kk \oplus \p$. Let $\aaa$ be a maximal toral subalgebra of $\p$ and let $\nu$ be an invariant metric on $G/K$. We set $(\tilde{S}, \beta, \tilde{f})$ to be an unstable minimal map to $\R^l$ from Proposition \ref{exun}. Choose an isometry between $\R^l$ and $\aaa$ (with the metric $\nu$) such that no root of $\aaa$ vanishes on the derivative $\partial \tilde{f}$ at every point of $\tilde{S}$. That is, $\partial \tilde{f}$ is generically in general position in the sense of section \ref{subsec: invariant polynomials} (since $\g$ is split, this is equivalent to being generically regular semisimple).

Let $M_{\beta}$ be the flat Riemannian bundle with translational holonomy given by $\beta$, with section $f$ descended from $\tilde{f}$. Using the chosen isometry, we can identify the vertical tangent bundle of $M_{\beta}$ with the trivial $\aaa$ bundle, so that $\partial f$ is a $(1,0)$-form valued in $\aaa^\C$. Since $f$ is minimal, $\nu((\partial f)^2) = 0$.

Let $(p_1, \ldots, p_n)$ be a homogeneous generating set for the invariant polynomials on $\p^\C$, and for each $i= 1, \ldots, n$, set $\alpha_i = p_i(\partial f)$. Then $(\alpha_1,\ldots, \alpha_n)$ is a point in the Hitchin base. Let $(P, A^{0,1}, \phi) = s((\alpha_1, \ldots, \alpha_n))$, where $s$ is the Hitchin section. Since $s$ is a section of the Hitchin map, every $K^\C$-invariant polynomial on $\p^\C$ takes the same values on $\phi$ as on $\partial f$. In particular, since $\nu$ is an invariant polynomial, the $G$-Higgs bundle $(P, A^{0,1}, \phi)$ is minimal with respect to $\nu$. Moreover, since $\partial f$ is in general position by the choice of isometry, and $s$ is a section of the Hitchin map, Lemma \ref{lem: reg semisimp} shows that $\phi$ is conjugate to $\partial f$ in any local trivialization of $P$, and hence that $\phi$ is also generically in general position (and generically regular semisimple). Let $B \subset S$ be the critical set of $\phi$, which is to say the set at which it is not in general position. Let $F^G_\phi$ be the toral bundle of $\phi$ over $S-B$ and $M^G_\phi$ be the $G$-apartment bundle of $\phi$ with minimal section $f^G$.

From Theorem $B$, for $R$ sufficiently large, the minimal harmonic map associated to $(P, A^{0,1}, R\phi)$ has index bounded below by the index of the minimal section $f^G$. Since the index of $f$ is positive, we are done if we can produce an isomorphism of flat Riemannian bundles between $M_{\beta}$ and $M_\phi^G$ that intertwines $f$ and $f^G$. It is even enough to produce an isomorphism between $\aaa^\C$ and $F^G_\phi$ that intertwines $\partial f$ and $\phi$, since the associated apartment bundles are then constructed by integration in the same way.

Fix a local trivialization of $P$ on $U \subset S-B$. By Lemma \ref{lem: same action}, in this trivialization every gauge transformation conjugating $\phi$ to $\partial f$ defines the same isomorphism from $F^G_\phi$ to $\aaa^\C$. Since the isomorphism is unique, these local isomorphisms patch together to give a global flat real isometric isomorphism between the bundle $F^G_\phi$ and the trivial bundle $\aaa^\C$ identifying $\phi$ with $\partial f$. This completes the proof of Theorem A.

\end{proof}
\begin{remark}\label{rem: genus 2}
    We restrict to $g\geq 3$ because every genus $2$ equivariant minimal surface in a Euclidean space is contained in a flat $2$-plane and hence stable (see \cite[section 5.3]{MSS}). To disprove the Labourie Conjecture for $g=2$ using Theorem B, one has to find a point in the Hitchin base with small cameral of genus larger than $2$ and equivariantly unstable minimal map in the sense of section \ref{sec: 3.2}.
\end{remark}

\end{subsection}

\begin{subsection}{Non-uniqueness of area minimizers}\label{sec: area minimizers}

To conclude the paper, we prove Corollary A, which states that area minimizers for Hitchin representations need not be unique. First we recall the Labourie map.

Fix a connected split real group $G$ of adjoint type with Cartan decomposition $\mathfrak{k}\oplus\mathfrak{p}$ and metric $\nu$ on $G/K$. Given a conformal structure $S$, let 
$$\mathbf{H}_G(S) = \bigoplus_{i=1}^l H^0(S,\mathcal{K}^{m_i})
$$ 
be the Hitchin base for $G$ with respect to a homogeneous basis $\{p_i\}$ of invariant polynomials on $\p$. Let $\mathbf{H}_G(\Sigma_g)$ be the bundle over $\mathbf{T}_g$ whose fiber over $S$ is $\mathbf{H}_G(S)$. Since $\nu$ defines a polynomial in $\pK$, $G$-Higgs bundles over $S$ with Higgs field $\phi$ satisfying $\nu(\phi^2)=0$ are taken by the Hitchin map onto a subspace $\mathbf{M}_G(S) \subset \mathbf{H}_G(S)$ of complex codimension $3g-3 = \mathrm{dim}(H^0(S, \mathcal{K}^2))$. Varying over $\mathbf{T}_g$, we obtain a codimension $3g-3$ subbundle $\mathbf{M}_G(\Sigma_g)\subset \mathbf{H}_G(\Sigma_g)$ that parametrizes the space of minimal maps associated to Hitchin representations. The Labourie map is the map
$$
L_G: \mathbf{M}_G(\Sigma_g) \to \mathrm{Hit}(\Sigma_g,G)
$$
to the Hitchin component of the representation variety, defined by applying the non-abelian Hodge correspondence to the Hitchin section for $S$ on the fiber $\mathbf{M}_G(S)$. Note that in section 1.1 we were using the simplified notation $\mathbf{M}_{n}(\cdot)=\mathbf{M}_{\textrm{PSL}(n,\R)}(\cdot)$, $\mathbf{H}_{n}(\cdot)=\mathbf{H}_{\textrm{PSL}(n,\R)}(\cdot)$, $L_n=L_{\textrm{PSL}(n,\R)}$. Labourie's existence result \cite[Theorem 1.0.3]{L1} shows that $L_G$ is surjective. Theorem A implies the following.

\begin{thm}
For every split real semisimple $G$ of rank at least $3$ with maximal compact subgroup $K$ and invariant metric $\nu$, the Labourie map $L_G$ is not injective.
\end{thm}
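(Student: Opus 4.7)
The plan is to observe that the theorem is essentially a restatement of Theorem~A, via the identification of the fiber of $L_G$ over a Hitchin representation $\rho$ with the set of $\rho$-equivariant minimal maps from $\tilde{\Sigma}_g$ to $N$.

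The first step is to establish this bijection carefully. Given $(S,\alpha) \in \mathbf{M}_G$, the Hitchin section produces a stable $G$-Higgs bundle $s(\alpha)$ whose Higgs field $\phi$ satisfies $\nu(\phi^2)=0$, i.e.\ is minimal with respect to $\nu$. Applying $G$-NAH~II converts $s(\alpha)$ to a harmonic section of a flat $N$-bundle whose monodromy is, by Theorem~\ref{hit}, a Hitchin representation $\rho = L_G(S,\alpha)$; lifting to the universal cover and using that $\nu(\phi^2)=0$ is precisely the conformality of the associated map, one obtains a $\rho$-equivariant minimal map into $N$. Conversely, any $\rho$-equivariant minimal map induces a conformal structure $S$ on $\Sigma_g$ (the unique one making it harmonic and conformal), and by the uniqueness clause of $G$-NAH~I applied to the irreducible representation $\rho$, a unique stable $G$-Higgs bundle on $S$, which by Theorem~\ref{hit} is of the form $s(\alpha)$ for a unique $\alpha \in \mathbf{H}_G(S)$; minimality forces $\alpha \in \mathbf{M}_G(S)$.

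Granted this bijection, $L_G$ is not injective if and only if some Hitchin representation admits at least two distinct equivariant minimal maps into $N$. This is exactly the ``In particular, there are at least two equivariant minimal surfaces in $N$'' clause of Theorem~A, which we have already proved. This yields the theorem. For the converse implication, which makes the equivalence honest, I would invoke the mountain pass lemma for the smooth proper energy functional $\mathbf{E}_\rho$ on Teichm\"uller space: properness supplies Palais--Smale, so two distinct stable critical points force the existence of a third, unstable one, recovering the unstable minimal map of Theorem~A.

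The main subtlety to verify in the above is the well-definedness of the conformal structure $S$ attached to an equivariant minimal map. This follows from the fact that a non-constant minimal map is an immersion away from a discrete set, so it pulls back a conformal structure on the complement of that set, and this extends uniquely across the punctures to a conformal structure on all of $\Sigma_g$. The rest of the argument is a formal unpacking of definitions already in place.
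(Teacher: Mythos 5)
Your proof is correct and takes essentially the same route as the paper, which simply declares this theorem equivalent to Theorem A: your identification of the fiber of $L_G$ over a Hitchin representation $\rho$ with the set of $\rho$-equivariant minimal maps (via $G$-NAH and the fact that the Hitchin section parametrizes the Higgs bundles of Hitchin representations) is exactly the intended unpacking, after which the ``at least two equivariant minimal surfaces'' clause of Theorem A gives non-injectivity. The mountain-pass aside about the converse implication is not needed for the statement (and would require further justification, e.g.\ a Palais--Smale-type condition), but it does not affect the validity of the proof.
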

The theorem implies the following lemma.
\begin{lem}\label{lem: nosec}
There is no continuous section of the Labourie map.
\end{lem}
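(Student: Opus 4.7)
The plan is to argue by contradiction. Suppose $\sigma: \mathrm{Hit}(G) \to \mathbf{M}_G$ is a continuous section of $L_G$. I will show that $\sigma(\mathrm{Hit}(G))$ is both open and closed in the connected manifold $\mathbf{M}_G$, forcing it to equal all of $\mathbf{M}_G$. But this would make $L_G|_{\mathbf{M}_G}$ a continuous bijection with continuous inverse $\sigma$, contradicting the non-injectivity of $L_G$ established in the Theorem just above.

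The key preliminary observation is that $\mathbf{M}_G$ and $\mathrm{Hit}(G)$ are smooth manifolds of equal real dimension. Indeed, $\mathbf{M}_G$ is a complex vector subbundle of $\mathbf{H}_G$ of complex codimension $3g-3$ over the contractible base $\mathbf{T}_g$, so by Riemann-Roch ($\dim H^0(S, \mathcal{K}^{m_i}) = (2m_i-1)(g-1)$ for $m_i \geq 2$) together with Hitchin's identity $\sum_{i=1}^l (2m_i - 1) = \dim G$, one computes $\dim_\R \mathbf{M}_G = (2g-2)\dim G = \dim_\R \mathrm{Hit}(G)$. Since $\sigma$ is a section of $L_G$, it is automatically injective, so by invariance of domain applied to a continuous injection between manifolds of equal dimension, $\sigma$ is an open embedding and $\sigma(\mathrm{Hit}(G))$ is open in $\mathbf{M}_G$.

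For closedness I observe that $\sigma$ is automatically proper: if $K\subset \mathbf{M}_G$ is compact, then $\sigma^{-1}(K)\subset L_G(K)$ (using $L_G\circ \sigma = \mathrm{id}$), and $\sigma^{-1}(K)$ is closed in the compact set $L_G(K)$ by continuity of $\sigma$, hence compact. A proper continuous map into a locally compact Hausdorff space is closed, so $\sigma(\mathrm{Hit}(G))$ is closed in $\mathbf{M}_G$. Since $\mathbf{M}_G$ is a vector bundle over the connected base $\mathbf{T}_g$, it is itself connected, and the nonempty clopen subset $\sigma(\mathrm{Hit}(G))$ must equal all of $\mathbf{M}_G$. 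This contradicts $L_G$ being non-injective.

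I don't anticipate any serious obstacle: the argument rests only on invariance of domain, automatic properness of a section of a continuous map, connectedness of $\mathbf{M}_G$, and the non-injectivity just established. The only bookkeeping step is verifying the dimension match, which reduces to the standard Hitchin/Riemann-Roch count recalled above.
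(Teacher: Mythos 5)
Your proposal is correct and follows essentially the same route as the paper: both arguments use the equality $\dim_\R \mathbf{M}_G = \dim_\R \mathrm{Hit}(G) = (2g-2)\dim G$, injectivity and properness of a continuous section, and Brouwer's invariance of domain to force the section to be a surjective homeomorphism, contradicting the non-injectivity of $L_G$. You merely make explicit the open-plus-closed-plus-connected step that the paper compresses into the assertion that $s$ is a homeomorphism, and you spell out the Riemann--Roch dimension count; the implicit reliance on continuity of $L_G$ for properness is common to both arguments.
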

\begin{proof}
Suppose that $s:\textrm{Hit}(\Sigma_g,G)\to\textbf{M}_G(\Sigma_g)$ is a continuous section of $L_G$. The spaces $\textbf{M}_G(\Sigma_g)$ and $\mathrm{Hit}(\Sigma,G)$ are both topologically open balls of the same real dimension $(2g-2)\dim G$. In the case of $\mathrm{Hit}(\Sigma,G)$, this is because the Hitchin section from a fixed Hitchin base is a homoeomorphism onto $\mathrm{Hit}(\Sigma,G)$, and since $\mathrm{dim}\mathbf{T}_g = \mathrm{dim}H^0(\mathcal{K}^2)$, this also shows that the dimensions of $\textbf{M}_G(\Sigma_g)$ and $\textrm{Hit}(\Sigma_g,G)$ are the same. Since $s$ is a section, it is both injective and proper. By Brouwer's invariance of domain, $s$ is homeomorphism. But then the Labourie map is also a homeomorphism, which contradicts Theorem A.
\end{proof}

To prove Corollary A, we argue by contradiction, and suppose that for every Hitchin representation $\rho,$ there exists a unique equivariant minimizing minimal surface. If this is the case, we can define a section 
$$
s_\mathrm{min}:\textrm{Hit}(\Sigma_g,G)\to \mathbf{M}_G(\Sigma_g)
$$ 
that associates each representation to the conformal structure of the minimal surface and the point in the Hitchin base for the Higgs bundle data. By Lemma \ref{lem: nosec}, the proof of Corollary A will follow from the lemma below.
\begin{lem}\label{lem: continuous}
If every Hitchin representation had a unique equivariant minimal surface, the map $s_{\textrm{min}}$ would be continuous.
\end{lem}
The auxiliary result we use is contained in a paper of Tholozan \cite{Tho}.
\begin{defn}
Let $X$ and $Y$ be two metric spaces, and $(F_y)_{y \in Y}$ a family of maps $F_y:X\to \R$ depending continuously on $Y$ in the compact-open topology. We say that $(F_y)_{y \in Y}$ is locally uniformly proper if for every $y_0\in Y,$ there exists a neighbourhood $U$ of $y_0$ such that for any $C\in\R,$ there exists a compact set $K\subset X$ such that for all $y\in U$ and $x\in X\backslash K$, $F_y(x)>C.$
\end{defn}
\begin{lem}[Proposition 2.6 of \cite{Tho}]\label{lem: tho}
Let $X$ and $Y$ be two metric spaces, and $(F_y)_{y \in Y}$ a locally uniformly proper family of maps $F_y:X\to \R$ depending continuously on $Y$. Assume that each $F_y$ achieves its minimum at a unique point $x_m(y)\in X$. Then the map from $Y\to X$ defined by $$y\mapsto x_m(y)$$ is continuous.
\end{lem}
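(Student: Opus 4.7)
The plan is a direct sequential compactness argument. Let $y_n \to y_\infty$ in $Y$; I will show $x_m(y_n) \to x_m(y_\infty)$ in $X$, which is equivalent to continuity since $Y$ is a metric space.

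First, I would control the minimum values. By definition of $x_m(y_n)$,
\[
F_{y_n}(x_m(y_n)) \;\leq\; F_{y_n}(x_m(y_\infty)).
\]
The continuity hypothesis (continuity of $y \mapsto F_y$ into $C(X,\R)$ with the compact-open topology) implies that $F_{y_n} \to F_{y_\infty}$ uniformly on compact subsets of $X$. Applied to the singleton $\{x_m(y_\infty)\}$, this gives $F_{y_n}(x_m(y_\infty)) \to F_{y_\infty}(x_m(y_\infty))$; in particular, the left-hand side of the displayed inequality is eventually bounded above by some finite constant $C$.

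Second, local uniform properness at $y_\infty$ furnishes a neighborhood $U$ of $y_\infty$ and a compact set $K \subset X$ such that $F_y(x) > C$ for every $y \in U$ and $x \notin K$. Since $y_n \in U$ eventually and $F_{y_n}(x_m(y_n)) \leq C$, we conclude $x_m(y_n) \in K$ for all large $n$.

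Third, I would show that every subsequential limit of $(x_m(y_n))$ equals $x_m(y_\infty)$. Extract a subsequence $x_m(y_{n_k}) \to x_\infty \in K$. For any $z \in X$, apply locally uniform convergence on the compact set $\{x_m(y_{n_k})\}_k \cup \{x_\infty, z\}$ to get $F_{y_{n_k}}(x_m(y_{n_k})) \to F_{y_\infty}(x_\infty)$ and $F_{y_{n_k}}(z) \to F_{y_\infty}(z)$; combined with the minimality inequality $F_{y_{n_k}}(x_m(y_{n_k})) \leq F_{y_{n_k}}(z)$, this yields $F_{y_\infty}(x_\infty) \leq F_{y_\infty}(z)$. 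Hence $x_\infty$ is a minimizer of $F_{y_\infty}$, and the assumed uniqueness forces $x_\infty = x_m(y_\infty)$. Since the whole sequence $(x_m(y_n))$ lies in the compact set $K$ for large $n$ and every convergent subsequence has limit $x_m(y_\infty)$, the full sequence converges to $x_m(y_\infty)$. The argument is the standard lower-semicontinuity/compactness proof of argmin continuity; I expect no substantive obstacle, only a careful interpretation of the compact-open continuity hypothesis, which is made rigorous by always passing limits on the compact set $K$ produced by local uniform properness.
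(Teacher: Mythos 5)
Your argument is correct and complete: the minimality comparison at $x_m(y_\infty)$ plus compact-open continuity bounds the minimal values, local uniform properness then traps $x_m(y_n)$ in a compact set, and the subsequence/uniqueness argument (with the standard sub-subsequence trick) gives convergence of the full sequence; the only implicit assumption, continuity of each $F_y$, is part of the compact-open setup and holds in the application. Note that the paper itself gives no proof of this lemma---it is quoted from Tholozan---and your proof is the same standard compactness argument for continuity of the argmin, so nothing further is needed.
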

Fix a hyperbolic metric $g_0$ on $\Sigma_g$. Given a representation $\rho:\pi_1(\Sigma_g)\to G$ and $\gamma\in \pi_1(\Sigma_g)$, let $$\ell(\rho(\gamma))=\textrm{inf}_{x\in G/K} d(\rho(\gamma)x,x)$$ be the translation length for $\rho(\gamma)$ and $\ell_{g_0}(\gamma)$ the $g_0$-length of the geodesic representative of $\gamma$. If $\rho$ is a Hitchin representation, then it is well-displacing \cite[Theorem 1.01]{L1}, which in \cite{L1} means that there exist constants $A,B>0$ such that $$\ell(\rho(\gamma)) \geq A\ell_{g_0}(\gamma)-B.$$ More generally, it is proved by Guichard-Wienhard that Anosov representations, which include Hitchin representations, define quasi-isometric embeddings from the Cayley graph of $\pi_1(\Sigma_g)$ to $G$ (with a left-invariant metric), and are hence well-displacing \cite[Theorem 1.7]{GuW}. In the proof below, we will use Theorem 5.14 of \cite{GuW}, which says that if $\rho$ is Anosov, then there are constants $K,C>0$ and a neighbourhood $U$ of $\rho$ in $\textrm{Hom}(\pi_1(\Sigma_g),G)$ such that every $\rho'\in \textrm{Hom}(\pi_1(\Sigma_g),G)$ is a $(K,C)$-quasi-isometric embedding, from which it follows that the $\rho'$'s have uniform well-displacing constants $A,B>0.$

\begin{proof}[Proof of Lemma \ref{lem: continuous}]
We take $X=\mathbf{T}_g$, $Y=\textrm{Hit}(\Sigma_g,G),$ and $F_\rho=\mathbf{E}_\rho : \mathbf{T}_g\to \R.$ $\rho \mapsto \mathbf{E}_\rho$ is smooth in $\rho$; this is a consequence of the implicit function theorem \cite{Sle}. Assuming $(\mathbf{E}_\rho)_{\rho\in \textrm{Hit}(\Sigma_g,G)}$ is locally uniformly proper, Lemma \ref{lem: tho} asserts that the map $m:\textrm{Hit}(\Sigma_g,G)\to \mathbf{T}_g$ taking $\rho$ to the conformal structure of the area minimizing surface is continuous. 

Toward continuity of $s_{\textrm{min}}$, it suffices to work locally in $\textbf{M}_G(\Sigma_g)$, where we can write $\textrm{s}_{min}(\rho)=(S(\rho),\alpha(\rho)),$ with $\alpha(\rho)\in \mathbf{M}_G(S(\rho))$. Every Hitchin representation $\rho:\pi_1(\Sigma_g)\to G$ defines a section $\delta_\rho:\mathbf{T}_g\to \mathbf{H}_G(\Sigma_g)$ as follows. For each Riemann surface, we obtain a $G$-Higgs bundle from $\rho$ using $G$-NAH I, and then we apply the Hitchin map to get a point in $\mathbf{H}_G(\Sigma_g)$. Assembling these sections into a map $\delta:\textrm{Hit}(\Sigma_g,G)\times \mathbf{T}_g\to \mathbf{H}_G(\Sigma_g),$ $\delta(\rho,\cdot) = \delta_\rho(\cdot),$ it is well-understood that $\delta$ is continuous (essentially a consequence of the fact that harmonic maps vary smoothly \cite{Sle}). We express $$\alpha(\rho)=\delta(\rho,S(\rho))$$ and deduce that $s_{\textrm{min}}$ is continuous.

It remains to justify that $(\mathbf{E}_\rho)_{\rho\in \textrm{Hit}(\Sigma_g,G)}$ is locally uniformly proper, which amounts to going through Labourie's proof that each $\mathbf{E}_\rho$ is proper and observing that the estimates can be made locally uniform in $\rho$. This requires no new insight, so we only give a brief explanation and point to the relevant references. 

Let $S_0$ be a fixed marked Riemann surface structure on $\Sigma_g$. In his proof of \cite[Theorem 1.0.3]{L1}, Labourie shows $$\mathbf{E}_\rho(S) \geq A_\rho (\textrm{inter}(S,S_0))^2,$$ where $\textrm{inter}(\cdot,\cdot):\mathbf{T}_g^2\to \R$ is the intersection function (see \cite{Bon}), and $S\mapsto \textrm{inter}(S,S_0)$ is known to be proper on $\mathbf{T}_g$ (see \cite[Proposition 4]{Bon} or \cite[Proposition 6.2.4]{L1}). Stepping into the proof, $A_\rho$ depends only on the minimal well-displacing constant $A$ for $\rho$, which by \cite[Theorem 5.14]{GuW} is locally uniformly controlled with $\rho$. This establishes the result.
\end{proof}

\begin{proof}[Proof of Corollary A]
Assuming uniqueness of area minimizers, we construct the section $s_\mathrm{min}:\textrm{Hit}(\Sigma_g,G)\to \mathbf{M}_G(\Sigma_g)$.
We apply Lemma \ref{lem: nosec} and Lemma \ref{lem: continuous} to find a contradiction.

\end{proof}
\end{subsection}

\end{section}

\bibliographystyle{plain}
\bibliography{bibliography}

\end{document}